\theoremstyle{plain}
\newtheorem{thm}{Th\'eor\`eme}[section]
\newtheorem{prop}[thm]{Proposition}
\newtheorem{lemme}[thm]{Lemme}
\newtheorem{cor}[thm]{Corollaire}
\theoremstyle{remark}
\newtheorem{rem}[thm]{Remarque}
\newtheorem{ex}[thm]{Exemple}
\theoremstyle{definition}
\newtheorem{df}[thm]{D\'efinition}
\newtheorem{paragr}[thm]{}
\newtheorem{subparagr}{}[thm]
\numberwithin{equation}{thm}
\newcommand{\titparagr}[1]{\emph{\textbf{#1.\ }}}
\newcommand{\titsubparagr}[1]{\emph{\textbf{#1.\ }}}
\newenvironment{GMlargeitemize}{\begin{list}{}{\setlength{\itemsep}{1.8pt plus .6pt minus .6pt}\setlength{\topsep}{3.5pt plus .8pt minus .8pt}\setlength{\labelwidth}{20pt}\setlength{\leftmargin}{32pt}}}{\end{list}}
\newenvironment{GMitemize}{\begin{list}{}{\setlength{\itemsep}{1.8pt plus .6pt minus .6pt}\setlength{\topsep}{3.5pt plus .8pt minus .8pt}\setlength{\labelwidth}{10pt}\setlength{\leftmargin}{22pt}}}{\end{list}}
\newcommand{\nbd}{\nobreakdash}
\newcommand{\cf}{\emph{cf.}}
\newcommand{\gmremarque}[1]{\relax}
\newcommand{\smsp}{\ }
\newcommand{\W}{\mathcal{W}}
\newcommand{\Winf}{\W_{\infty}}
\newcommand{\Wn}[1]{\W_{#1}}
\newcommand{\Wzer}{\W_{0}}
\newcommand{\Wtr}{\W_{\mathrm tr}}
\newcommand{\Wgr}{\W_{\mathrm gr}}
\newcommand{\Wmod}[1]{\W_{#1}}
\newcommand{\Wder}[1]{\W_{#1}}
\newcommand{\C}{\mathcal{C}}
\newcommand{\D}{\mathbb{D}}
\newcommand{\Dermod}[1]{\D_{#1}}
\newcommand{\Cat}{{\mathcal{C} \mspace{-2.mu} \it{at}}}
\newcommand{\CAT}{\mathcal{CAT}}
\newcommand{\Ens}{{\mathcal{E} \mspace{-2.mu} \it{ns}}}
\newcommand{\pref}[1]{{\widehat{ #1 }}}
\newcommand{\op}[1]{{#1}^{\circ}}
\newcommand{\id}[1]{1^{}_{#1}}
\newcommand{\Fl}{\operatorname{\mathsf{Fl}}}
\newcommand{\Ob}{\operatorname{\mathsf{Ob}}}
\newcommand{\Homint}{\operatorname{\underline{\mathsf{Hom}}}}
\newcommand{\Hom}{\operatorname{\mathsf{Hom}}}
\newcommand{\Q}{\mathcal{Q}}
\newcommand{\comma}{\mathop{\downarrow}}
\newcommand{\circh}{\circ^{}_{\mathsf h}}
\newcommand{\circv}{\circ^{}_{\mathsf v}}
\newcommand{\holim}{\mathop{\oalign{{\rm holim}\cr
\hidewidth$\mathrel{\hbox to 9.2mm{\leftarrowfill}}$\hidewidth\cr}}}
\newcommand{\hocolim}{\mathop{\oalign{{\rm holim}\cr
\hidewidth$\mathrel{\hbox to 9.2mm{\rightarrowfill}}$\hidewidth\cr}}}
\newcommand{\cm}[2]{\mathchoice {#1\raise -1.8pt\vbox{\hbox{$\kern -.8pt/#2$}}} {#1\raise -1.8pt\vbox{\hbox{$\kern -.8pt/#2$}}\kern .8pt} {#1\raise -1.8pt\vbox{\hbox{$\scriptstyle\kern -.8pt /#2$}}} {#1\raise -1.8pt\vbox{\hbox{$\scriptscriptstyle\kern -.8pt /#2$}}} }
\newcommand{\mc}[2]{\mathchoice {\raise -1.8pt\vbox{\hbox{$#1\backslash$}}#2} {\raise -1.8pt\vbox{\hbox{$#1\backslash$}}#2} {\raise -1.8pt\vbox{\hbox{$\scriptstyle#1\backslash$}}#2} {\raise -1.8pt\vbox{\hbox{$\scriptscriptstyle#1\backslash$}}#2} }
\newcommand{\Toto}[2]{{\hskip -2.5pt\xymatrixcolsep{#1pc}\xymatrix{\ar[r]^{#2}&}\hskip -2.5pt} }
\newcommand{\Otot}[2]{{\hskip -2.5pt\xymatrixcolsep{#1pc}\xymatrix{&\ar[l]_{#2}}\hskip -2.5pt} }
\newcommand{\toto}{{\mathchoice{\Toto{1.3}{}}{\Toto{.7}{}}{{\scriptstyle\kern 1.5pt\to\kern 1.5pt}}{{\scriptscriptstyle\kern .5pt\to\kern .5pt}}}}
\newcommand{\otot}{\Otot{.7}{}}
\newcommand{\mapstoto}{{\mathchoice{{\hskip -2.5pt\xymatrixcolsep{1.3pc}\xymatrix{\ar@{|->}[r]&}\hskip -2.5pt}}{{\hskip -2.5pt\xymatrixcolsep{.7pc}\xymatrix{\ar@{|->}[r]&}\hskip -2.5pt}}{{\scriptstyle\kern 1.5pt\mapsto\kern 1.5pt}}{{\scriptscriptstyle\kern .5pt\mapsto\kern .5pt}}}}
\author{Georges Maltsiniotis}
\address{Institut de Math\'ematiques de Jussieu\\
Universit\'e Paris 7 Denis Diderot\\
Case Postale 7012\\
B\^atiment Chevaleret\\
75205 PARIS Cedex 13\\
FRANCE}
\email{maltsin\at math.jussieu.fr}
\urladdr{http://www.math.jussieu.fr/\raise -3.3pt\vbox{\hbox{$\widetilde{ \ }\,$}}maltsin/}
\title[Carrés exacts homotopiques, et dérivateurs]{Carrés exacts homotopiques,\\ et dérivateurs}
\begin{document}

\frontmatter

\begin{abstract}
Le but de ce texte est d'introduire une variante homotopique de la notion de carré exact, étudiée par René Guitart, et d'expliquer le rapport de cette généralisation avec la théorie des dérivateurs.
\end{abstract}

\begin{altabstract}
The aim of this paper is to generalize  in a homotopical framework the notion of exact square introduced by René Guitart, and explain the relationship between this generalization and the theory of derivators.
\end{altabstract}

\subjclass{14F20, \textbf{18A25}, \textbf{18A40}, 18G10, \textbf{18G50}, \textbf{18G55}, 55N30, \textbf{55U35}, \textbf{55U40}}
\keywords{Carré exact, carré comma, carré de Beck-Chevalley, dérivateur, foncteur lisse, foncteur propre, propriétés de changement de base}

\maketitle
\tableofcontents
\newpage

\mainmatter

\section*{Introduction}

Cet article est consacré à une généralisation homotopique de la notion de carré exact de Guitart~\cite{Guit1,Guit2,GuitSplit,Guit3,Guit4,Guit5}. Si $A$ est une petite catégorie, on note $\pref{A}$ la catégorie des préfaisceaux d'ensembles sur $A$, et si $u:A\toto B$ est un foncteur entre petites catégories, on note $u^*$ le foncteur image inverse par $u$. 
\[
u^*:\pref{B}\toto\pref{A}\smsp,\qquad F\mapstoto Fu
\]
Le foncteur $u^*$ admet un adjoint à gauche $u^{}_!:\pref{A}\toto\pref{B}$, et un adjoint à droite \hbox{$u^{}_*:\pref{A}\toto\pref{B}$}. Pour tout \emph{carré} dans la 2\nbd-catégorie des petites catégories
\[
\mathcal{D}\ =\quad
\raise 23pt
\vbox{
\UseTwocells
\xymatrixcolsep{2.5pc}
\xymatrixrowsep{2.4pc}
\xymatrix{
A'\ar[d]_{u'}\ar[r]^{v}
\drtwocell<\omit>{\alpha}
&A\ar[d]^{u}
\\
B'\ar[r]_{w}
&B
&\hskip -30pt,
}
}
\]
consistant en la donnée de quatre petites catégories $A$, $B$, $A'$, $B'$, de quatre foncteurs \hbox{$u:A\toto B$}, $u':A'\toto B'$, $v:A'\toto A$, $w:B'\toto B$, et d'un morphisme de foncteurs $\alpha:uv\toto wu'$, on a des morphismes de \og changement de base\fg
\[
c^{}_{\mathcal{D}}:w^*u^{}_*\toto u'_*v^*\qquad\hbox{et}\qquad c'_{\mathcal{D}}:v^{}_!u'^*\toto u^*w^{}_!\smsp,
\]
transposés l'un de l'autre. Le carré $\mathcal{D}$ est \emph{exact au sens de Guitart} si l'un de ses morphismes de foncteurs (donc les deux) est un isomorphisme.
\smallbreak

Pour toute catégorie $\C$, et toute petite catégorie $A$, on note $\C(A)$ la catégorie des préfaisceaux sur $A$ à valeurs dans $\C$ (catégorie des foncteurs de la catégorie opposée $\op{A}$ de $A$, vers $\C$), de sorte que si $\Ens$ désigne la catégorie des ensembles, alors $\pref{A}=\Ens(A)$. Un morphisme $u:A\toto B$ de $\Cat$ (la catégorie des petites catégories) définit un foncteur image inverse $u^*_{\C}:\C(B)\toto C(A)$, noté plus simplement $u^*$, quand aucune confusion n'en résulte. Si la catégorie $\C$ est complète (resp. cocomplète), alors le foncteur $u^*$ admet un adjoint à droite $u^{}_*=u^{\C}_*:\C(A)\toto\C(B)$ (resp. un adjoint à gauche $u^{}_!=u^{\C}_!:\C(A)\toto\C(B)$), et si le carré $\mathcal{D}$ ci-dessus est exact au sens de Guitart, le morphisme de changement de base 
\[
c^{\C}_{\mathcal{D}}:w^*_{\C}\circ u^{\C}_*\toto u'^{\C}_*\circ v^*_{\C}\qquad\hbox{(resp.}\quad c'^{\C}_{\mathcal{D}}:v^{\C}_!\circ u'^*_{\C}\toto u^*_{\C}\circ w^{\C}_!\ )\ 
\]
est un isomorphisme.
\smallbreak

Soit $\C$ une catégorie de modèles de Quillen~\cite{Qu0} complète et cocomplète. Pour toute petite catégorie $A$, on note $\D_{\C}(A)$, ou plus simplement $\D(A)$, la catégorie homotopique de $\C(A)$, localisation de $\C(A)$ par les équivalences faibles argument par argument. Pour tout morphisme $u:A\toto B$ de $\Cat$, le foncteur $u^*=u^*_{\C}:\C(B)\toto\C(A)$ respecte les équivalences faibles argument par argument, et induit donc par localisation un foncteur $u^*=u^*_{\D}:\D(B)\toto\D(A)$. Ce foncteur admet un adjoint à droite \hbox{$u^{}_*=u^{\D}_*:\D(A)\toto\D(B)$}, et un adjoint à gauche $u^{}_!=u^{\D}_!:\D(A)\toto\D(B)$~\cite{CiDer},~\cite{DHKS}. On dit que le carré $\mathcal{D}$ ci-dessus est \emph{homotopiquement exact} (pour la catégorie de modèles~$\C$) si les morphismes de changement de base (transposés l'un de l'autre)
\[
c^{\D}_{\mathcal{D}}:w^*_{\D}\circ u^{\D}_*\toto u'^{\D}_*\circ v^*_{\D}\qquad\hbox{et}\qquad c'^{\D}_{\mathcal{D}}:v^{\D}_!\circ u'^*_{\D}\toto u^*_{\D}\circ w^{\D}_!\ 
\]
sont des isomorphismes. On démontre que cette notion dépend assez peu de la catégorie de modèles $\C$. De façon plus précise, elle ne dépend que de la classe $\Wmod{\C}$ des flèches $u:A\toto B$ de $\Cat$ induisant, pour tout objet $X$ de $\C$, un isomorphisme $\hocolim_AX\toto\hocolim_BX$ de la colimite homotopique du foncteur constant de valeur $X$ indexé par $A$, vers celle de celui indexé par $B$. La classe $\Wmod{C}$ satisfait les propriétés de ce que Grothendieck appelle un \emph{localisateur fondamental}~\cite{PS},~\cite{Der},~\cite{Ast}. Ainsi, les localisateurs fondamentaux fournissent un cadre naturel pour définir la notion de carré exact homotopique.
\smallbreak

Le foncteur $\D=\D_{\C}$ qu'on vient d'associer à une catégorie de modèles de Quillen complète et cocomplète  $\C$
\[
A\mapstoto\D(A)\smsp,\qquad u\mapstoto u^*_{\D}
\]
s'étend facilement aux transformations naturelles, définissant ainsi un dérivateur de Grothendieck~\cite{CiDer}. Un \emph{dérivateur} est un $2$\nbd-foncteur contravariant de la $2$\nbd-catégorie des petites catégories vers celle des catégories (non nécessairement petites), satisfaisant une liste d'axiomes~\cite{PS},~\cite{Der},~\cite{Mal1}. Le concept de dérivateur a été introduit par Grothendieck comme l'objet principal de l'algèbre homotopique, les catégories de modèles jouant le même rôle vis-à-vis des dérivateurs que les présentations par générateurs et relations vis-à-vis des groupes. Des notions proches de celle de dérivateur ont été étudiées par Heller~\cite{Hel,Hel2,Hel3,Hel4}, Keller~\cite{Bernhard} et Franke~\cite{Franke}. Les principales applications de la théorie des carrés exacts homotopiques se situent dans la problématique des dérivateurs, où les propriétés de changement de base, analogues à celles des morphismes propres ou lisses en géométrie algébrique~\cite[exposés~12, 13 et 16]{SGA4}, jouent un rôle capital. Un des buts du présent article est de montrer l'enchaînement du formalisme des dérivateurs avec celui des carrés exacts homotopiques.
\smallbreak

Dans la première section, on rappelle la définition d'un localisateur fondamental et des nombreuses notions qui lui sont attachées, et on présente les principaux exemples de localisateurs fondamentaux. Dans la deuxième section, on associe, à chaque localisateur fondamental, une notion de carré exact, et on explique comment retrouver le cas particulier des carrés exacts de Guitart. On démontre, de façon élémentaire, les propriétés les plus importantes des carrés exacts. En particulier, on obtient que pour tout localisateur fondamental, les carrés comma ainsi que les carrés de Beck-Chevalley sont exacts. En revanche, contrairement au cas des carrés exacts de Guitart, les carrés cocomma ne sont \emph{pas} en général exacts. On étudie le rapport de la notion de carré exact avec celle de foncteur propre ou lisse, introduite par Grothendieck~\cite{PS},~\cite{Der},~\cite{Ast},~\cite{Mal2}. Enfin, on introduit une très légère variante de la notion de carré exact, celle de carré exact faible, qui apparaît naturellement dans la théorie des dérivateurs.
\smallbreak

La troisième section est consacrée aux structures définies sur la $2$\nbd-catégorie des petites catégories par la donnée d'une classe de carrés, qu'on appellera carrés exacts, satisfaisant à divers propriétés de stabilité et de non trivialité ou \og d'initialisation\fg{}. On caractérise la classe des carrés exacts homotopiques comme la plus petite classe de carrés satisfaisant à \emph{toutes} ces propriétés, et on obtient, en particulier, une caractérisation analogue de la classe des carrés exacts de Guitart. Par ailleurs, on démontre, à l'aide de manipulations \og géométriques\fg{} de carrés, des lemmes utiles à la théorie des dérivateurs.
\smallbreak

Le but de la dernière section est de présenter la théorie des dérivateurs du point de vue des carrés exacts homotopiques. On montre que la plupart des résultats élémentaires sur les dérivateurs, démontrés par Grothendieck dans~\cite{Der}, sont conséquences des propriétés formelles des structures de carrés exacts, et des propriétés des carrés exacts homotopiques. Inversement, ces propriétés prennent tout leur sens sous l'éclairage des dérivateurs. On s'applique à isoler soigneusement ceux parmi les axiomes des dérivateurs utiles pour chaque énoncé, ce qui s'avère crucial pour les applications, vu que souvent dans les exemples une partie seulement de ces axiomes est satisfaite.

\section{Rappels sur les localisateurs fondamentaux\\ et les notions qui en découlent}

\begin{paragr} \titparagr{La définition des localisateurs fondamentaux}
On note $\Cat$ la catégorie des petites catégories, et $e$ la catégorie ponctuelle, objet final de $\Cat$. On dit qu'une partie $\W$ de $\Fl(\Cat)$
est {\it faiblement satur\'ee} si elle satisfait aux conditions suivantes:
\begin{GMlargeitemize}
\item[FS1] Les identit\'es sont dans $\W$.
\item[FS2] Si deux des trois flèches d'un triangle commutatif sont dans $\W$, il en est de même de la troisième.
\item[FS3] Si $i:A'\toto A$ et $r:A\toto A'$ sont deux morphismes de $\Cat$ tels que $ri=\id{A'}$, et si $ir$ est dans $\W$, il en est de même de $r$.
\end{GMlargeitemize}
On rappelle qu'un \emph{localisateur fondamental}~\cite{PS},~\cite{Der},~\cite{Ast} est une classe $\W$ de flèches de $\Cat$ satisfaisant aux conditions suivantes:
\begin{GMlargeitemize}
\item[LA] La partie $\W$ de $\Fl(\Cat)$ est faiblement saturée.
\item[LB] Si $A$ est une petite catégorie admettant un objet final, alors $A\toto e$ est dans~$\W$.
\item[LC] Si
$$
\xymatrixcolsep{1pc}
\xymatrix{
A\ar[rr]^{u}\ar[dr]_{v}
&&B\ar[dl]^{w}
\\
&C
}$$
est un triangle commutatif de $\Cat$, et si pour tout objet $c$ de $C$,
le foncteur $\cm{u}{c}:\cm{A}{c}\toto\cm{B}{c}$ induit par $u$ est dans $\W$, alors $u$ est dans $\W$.
\end{GMlargeitemize}
On dit que la classe des flèches $\W$ est un \emph{localisateur fondamental faible} si elle satisfait aux conditions LA et LB, et à la condition LC seulement pour $C=B$ et $w=\id{B}$, autrement dit à la condition:
\begin{GMlargeitemize}
\item[LC${}_{\mathrm f}$] Si $u:A\toto B$ est un morphisme de $\Cat$, et si pour tout objet $b$ de $B$, le foncteur $\cm{A}{b}\toto\cm{B}{b}$ induit par $u$ est dans $\W$, alors $u$ est dans $\W$.
\end{GMlargeitemize}
Les éléments de $\W$ s'appellent des \emph{$\W$\nobreakdash-équivalences}, ou \emph{équivalences faibles}, quand aucune ambiguïté n'en résulte.

\end{paragr}

\begin{paragr} \titparagr{Exemples de localisateurs fondamentaux} Il existe de nombreux exemples de localisateurs fondamentaux.

\begin{subparagr} \titsubparagr{\boldmath Le localisateur fondamental $\Winf$} \label{defWinf}
Le localisateur fondamental le plus important est celui des équivalences faibles usuelles de $\Cat$, classe des foncteurs entre petites catégories dont l'image par le foncteur nerf est une équivalence faible simpliciale, autrement dit, un morphisme d'ensembles simpliciaux dont la réalisation topologique est une équivalence d'homotopie. Il sera noté $\Winf$, et ses éléments seront souvent appelés des $\infty$\nobreakdash-\emph{équivalences}. Le fait que $\Winf$ est un localisateur fondamental \emph{faible} résulte directement du théorème~A de Quillen~\cite{Qu}, qui n'est autre que la condition LC${}_{\mathrm f}$. La preuve de Quillen s'adapte facilement pour montrer la condition plus forte LC, qui est une version \emph{relative} du théorème~A~\cite[théorème 2.1.13]{Ci2}, et Cisinski démontre que $\Winf$ est le plus petit localisateur fondamental~\cite[théorème 2.2.11]{Ci2}, et même le plus petit localisateur \hbox{faible~\cite[théorème~6.1.18]{CiAst}}. 
\end{subparagr}

\begin{subparagr} \titsubparagr{\boldmath Les localisateurs fondamentaux $\Wn{n}$, $n\geqslant0$} \label{defWn}
On rappelle que pour un entier $n\geqslant0$, une $n$\nbd-équivalence d'espaces topologiques est une application continue induisant une bijection des $\pi_0$, et un isomorphisme des $i$\nbd-èmes groupes d'homotopie pour tout $i$, $1\leqslant i\leqslant n$, et tout choix de point base. On dit qu'un foncteur entre petites catégories est une $n$\nbd-\emph{équivalence} si la réalisation topologique de son nerf est une $n$\nbd-équivalence topologique. Les $n$\nbd-équivalences de $\Cat$ forment un localisateur fondamental~\cite[section 9.2]{CiAst}, noté~$\Wn{n}$. Le localisateur fondamental $\Winf$ est l'intersection des $\Wn{n}$, $n\geqslant0$.
\end{subparagr}

\begin{subparagr} \titsubparagr{\boldmath Les localisateurs fondamentaux $\Wtr$ et $\Wgr$} \label{defWgr}
La classe de toutes les flèches de $\Cat$ est un localisateur fondamental, qu'on appelle \emph{trivial}. Les foncteurs entre petites catégories 
toutes deux non vides, ou toutes deux vides,
forment un localisateur fondamental, qu'on appelle \emph{grossier}. On démontre que les seuls localisateurs fondamentaux qui ne sont pas contenus dans $\Wzer$ sont le localisateur fondamental trivial $\Wtr=\Fl\,\Cat$ et le localisateur fondamental grossier $\Wgr$~\cite[proposition 9.3.2]{CiAst}. On a des inclusions
\[
\Winf\subset\Wn{n}\subset\Wn{m}\subset\Wzer\subset\Wgr\subset\Wtr\smsp,\qquad m\leqslant n\smsp.
\]
\end{subparagr}

\begin{subparagr} \titsubparagr{Localisateur fondamental associé à une catégorie de modèles} \label{locfondcatmod}
Il existe beaucoup d'autres localisateurs fondamentaux. Par exemple, pour toute catégorie de modèles de Quillen $\C$, la classe $\Wmod{\C}$ des flèches $u:A\toto B$ de $\Cat$ induisant, pour tout objet $X$ de $\C$, un isomorphisme $\hocolim_AX\toto\hocolim_BX$ de la colimite homotopique du foncteur constant de valeur $X$ indexé par $A$, vers celle de celui indexé par $B$, est un localisateur fondamental. Cela est une conséquence immédiate des propriétés formelles des colimites homotopiques.
\end{subparagr}

\begin{subparagr} \titsubparagr{Localisateur fondamental associé à un dérivateur} \label{locfondderinf}
\`A tout dérivateur $\D$, on associe un localisateur fondamental $\Wder{\D}$~\cite{Der},~\cite{Mal1}. Cet exemple généralise le précédent. En effet, à toute catégorie de modèles de Quillen, on associe un dérivateur $\Dermod{\C}$~\cite{CiDer}, et on a (essentiellement par définition) $\Wder{\Dermod{\C}}=\Wmod{\C}$.
\end{subparagr}
\end{paragr}

\begin{rem}
On ne connaît pas d'exemple de localisateur fondamental faible qui ne soit pas un localisateur fondamental.
\end{rem}

\begin{paragr} \titparagr{Propriétés de stabilité des localisateurs fondamentaux} \label{stablocfond}
Si $\W$ est un localisateur fondamental faible (et \emph{a fortiori} s'il est un localisateur fondamental) une flèche \hbox{$u:A\toto B$} de $\Cat$ est une $\W$\nbd-équivalence si et seulement si le foncteur $\op{u}:\op{A}\toto\op{B}$, obtenu par passage aux catégories opposées, est une $\W$\nbd-équivalence~\cite[proposition 1.1.22]{Ast}. Autrement dit, on a $\op{\W}=\W$. Si $\W$ est un localisateur fondamental, alors il est stable par produits finis~\cite[proposition 2.1.3]{Ast}, par petites sommes~\cite[proposition 2.1.4]{Ast}, et par petites limites inductives filtrantes~\cite[proposition 2.4.12,~(\emph{b})]{Ast} et en particulier par rétractes.
\end{paragr}

\begin{paragr} \titparagr{Notions associées à un localisateur fondamental} Soit $\W$ un localisateur fondamental faible.

\begin{subparagr} \titsubparagr{Catégories asphériques} \label{catasph}
On dit qu'une petite catégorie $A$ est $\W$\nbd-\emph{asphérique}, ou plus simplement \emph{asphérique}, si le foncteur $A\toto e$ de $A$ vers la catégorie finale est une $\W$\nbd-équivalence. L'axiome LB affirme qu'une petite catégorie admettant un objet final est $\W$\nbd-asphérique, et il résulte de la stabilité de $\W$ par passage aux catégories opposées qu'une petite catégorie admettant un objet initial est $\W$\nbd-asphérique. La classe des petites catégories $\W$\nbd-asphériques est stable par passage à la catégorie opposée, par produits finis~\cite[corollaire 1.1.5]{Ast}, et par petites limites inductives filtrantes~\cite[proposition~2.4.12,~(\emph{a})]{Ast}.
\end{subparagr}

\begin{subparagr} \titsubparagr{Foncteurs asphériques, coasphériques} \label{asphcoasph}
Soit $u:A\toto B$ un morphisme de $\Cat$. On dit que le foncteur $u$ est $\W$\nbd-\emph{asphérique}, ou plus simplement \emph{asphérique}, si pour tout objet $b$ de $B$ le morphisme $\cm{A}{b}\toto\cm{B}{b}$, induit par $u$, est une $\W$\nbd-équivalence. Comme la catégorie $\cm{B}{b}$ admet un objet final, il résulte de LA et LB que cela revient à demander que la catégorie $\cm{A}{b}$ soit $\W$\nbd-asphérique. La condition LC${}_{\mathrm f}$ affirme qu'un foncteur $\W$\nbd-asphérique est une $\W$\nbd-équivalence. Un foncteur admettant un adjoint à droite est $\W$\nbd-asphérique~\cite[proposition 1.1.9]{Ast}.
\smallbreak

Dualement, on dit que le foncteur $u$ est $\W$\nbd-\emph{coasphérique} ou plus simplement \emph{coasphérique}, si pour tout objet $b$ de $B$ le morphisme $\mc{b}{A}\toto\mc{b}{B}$, induit par $u$, est une $\W$\nbd-équivalence, ou de façon équivalente (puisque la catégorie $\mc{b}{B}$ admet un objet initial) si la catégorie $\mc{b}{A}$ est $\W$\nbd-asphérique. Il résulte de l'isomorphisme canonique $\op{(\mc{b}{A})}\simeq\cm{\op{A}}{b}$ et de la stabilité de $\W$ par passage aux catégories opposées que le foncteur $u$ est $\W$\nbd-coasphérique si et seulement si le foncteur $\op{u}:\op{A}\toto\op{B}$ est $\W$\nbd-asphérique. En particulier, un foncteur $\W$\nbd-coasphérique est une $\W$\nbd-équivalence. Un foncteur admettant un adjoint à gauche est $\W$\nbd-coasphérique.
\smallbreak

Une petite catégorie $A$ est $\W$\nbd-asphérique si et seulement si le foncteur $A\toto e$ est $\W$\nbd-asphérique, ou de façon équivalente $\W$\nbd-coasphérique. Si $u:A\toto B$ est un foncteur $\W$\nbd-asphérique (resp. $\W$\nbd-coasphérique), alors pour tout objet $b$ de $B$, le foncteur $\cm{A}{b}\toto\cm{B}{b}$ (resp. $\mc{b}{A}\toto\mc{b}{B}$), induit par $u$, l'est aussi~\cite[lemme 1.1.7]{Ast}. Si
$$
\xymatrixcolsep{1pc}
\xymatrix{
A\ar[rr]^{u}\ar[dr]_{v}
&&B\ar[dl]^{w}
\\
&C
}$$
désigne un triangle commutatif de $\Cat$, et si $u$ est un foncteur $\W$\nbd-asphérique (resp. $\W$\nbd-coasphérique), alors pour que le morphisme $v$ soit $\W$\nbd-asphérique (resp. $\W$\nbd-coasphérique), il faut et il suffit que $w$ le soit~\cite[proposition 1.1.8]{Ast}. En particulier, la classe des foncteurs $\W$\nbd-asphériques (resp. $\W$\nbd-coasphériques) est stable par composition. Elle est également stable par produits finis~\cite[corollaire 1.1.6]{Ast}.
\end{subparagr}

\begin{subparagr} \titsubparagr{\'Equivalences faibles universelles} \label{equniv}
On dit qu'une flèche $u:A\toto B$ de $\Cat$ est une $\W$\nbd-\emph{équivalence universelle} si elle est une $\W$\nbd-équivalence et le reste après tout changement de base,
autrement dit, si pour tout carré cartésien
\[
\xymatrix{
&A'\ar[r]\ar[d]_{u'}
&A\ar[d]^{u}
\\
&B'\ar[r]
&B
&\hskip -30pt,
}
\]
$u'$ est une $\W$-équivalence. Une $\W$\nbd-équivalence universelle est en particulier un foncteur à la fois asphérique et coasphérique. Si $A$ est une catégorie asphérique alors le foncteur $A\toto e$ est une $\W$\nbd-équivalence universelle~\cite[proposition~1.1.4]{Ast}.
\end{subparagr}

\begin{subparagr} \titsubparagr{\'Equivalences faibles locales, colocales} \label{eqloccoloc}
\'Etant donné un triangle commutatif dans $\Cat$
$$
\xymatrixcolsep{1pc}
\xymatrix{
A\ar[rr]^{u}\ar[dr]_{v}
&&B\ar[dl]^{w}
\\
&C
&\hskip 10pt,\hskip -10pt
}$$
on dit que le foncteur $u$ est une $\W$\nbd-\emph{équivalence} (ou une \emph{équivalence faible}) \emph{localement} (resp. \emph{colocalement}) sur $C$ si pour tout objet $c$ de $C$, le foncteur $\cm{A}{c}\toto\cm{B}{c}$ (resp. $\mc{c}{A}\toto\mc{c}{B}$), induit par $u$, est une $\W$\nbd-équivalence. Le foncteur $u$ est une $\W$\nbd-équivalence colocalement sur $C$ si et seulement si le foncteur $\op{u}:\op{A}\toto\op{B}$ est une $\W$\nbd-équivalence localement sur $\op{C}$. Si le foncteur $u$ est $\W$\nbd-asphérique (resp. $\W$\nbd-coasphérique), alors il est \emph{a fortiori} une $\W$\nbd-équivalence localement (resp. colocalement) sur $C$~\cite[lemme~1.1.7]{Ast}. La condition LC affirme que si le foncteur $u$ est une $\W$\nbd-équivalence localement sur $C$, alors il est une $\W$\nbd-équivalence. Ainsi, si le localisateur fondamental faible $\W$ est un localisateur fondamental, un foncteur qui est une $\W$\nbd-équivalence localement ou colocalement sur $C$ est une $\W$\nbd-équivalence. Plus généralement, sous cette hypothèse, pour toute flèche $C\toto C'$ de $\Cat$, une $\W$\nbd-équivalence localement (resp. colocalement) sur $C$ l'est aussi sur $C'$~\cite[lemme~3.1.5]{Ast}. Un foncteur $u:A\toto B$ est $\W$\nbd-asphérique (resp. $\W$\nbd-coasphérique) si et seulement si il est une $\W$\nbd-équivalence localement (resp. colocalement) sur $B$. Pour que $u$ soit une $\W$\nbd-équivalence, il faut et il suffit qu'il soit une $\W$\nbd-équivalence localement (ou colocalement) sur la catégorie ponctuelle $e$.
\end{subparagr}

\begin{subparagr} \titsubparagr{Foncteurs propres, lisses} \label{defproprelisse}
Soit $u:A\toto B$ un morphisme de $\Cat$. Pour tout objet $b$ de $B$, on note $A_b$ la \emph{fibre} de $u$ en $b$, autrement dit, la sous-catégorie (non pleine) de $A$ dont les objets sont les objets $a$ de $A$ tels que $u(a)=b$, et dont les morphismes sont les flèches $f$ de $A$ telles que $u(f)=\id{b}$.  On dit que le foncteur $u$ est $\W$\nbd-\emph{lisse} (resp. $\W$\nbd-\emph{propre}), ou plus simplement \emph{lisse} (resp. \emph{propre}),  si pour tout objet $b$ de $B$, le morphisme canonique
\[
\begin{aligned}
&A_b\toto\mc{b}{A}\smsp,\quad a\mapstoto (a,\,\id{b}:b\textstyle\toto u(a))\smsp,\quad a\in\Ob A_b\smsp,\\
\hbox{(resp.}\quad
&A_b\toto\cm{A}{b}\smsp,\quad a\mapstoto (a,\,\id{b}:u(a)\textstyle\toto b)\smsp,\quad a\in\Ob A_b\smsp,\quad)
\end{aligned}
\]
est $\W$\nbd-asphérique (resp. $\W$\nbd-coasphérique)~\cite{Der}, \cite[section 3.2]{Ast}. Le foncteur \hbox{$u:A\toto B$} est propre si et seulement si le foncteur $\op{u}:\op{A}\toto\op{B}$ est lisse.
\smallbreak

Les foncteurs $u:A\toto B$ dans $\Cat$ faisant de $A$ une catégorie \emph{préfibrée} (resp.~\emph{pré\-co\-fibrée})~\cite[exposé VI]{SGA1} sont des exemples de foncteurs lisses (resp.~propres), car alors le morphisme canonique $A_b\toto\mc{b}{A}$ (resp.~$A_b\toto\cm{A}{b}$) admet un adjoint à droite (resp.~à gauche), et est donc en particulier $\W$\nbd-asphérique (resp. $\W$\nbd-coasphérique). On dira alors que $u$ est une \emph{préfibration} (resp.~une \emph{précofibration}).
\end{subparagr}
\end{paragr}

\begin{paragr} \titparagr{Notions associées à des localisateurs fondamentaux comparables} \label{changloc}
Soient $\W$ et $\W'$ deux localisateurs fondamentaux faibles tels que $\W\subset\W'$. Il est immédiat que toute catégorie $\W$\nbd-asphérique est $\W'$\nbd-asphérique, et que tout foncteur $\W$\nbd-asphérique (resp. $\W$\nbd-coasphérique) est $\W'$\nbd-asphérique (resp. $\W'$\nbd-coasphérique). En particulier, pour tout localisateur fondamental faible $\W$, une catégorie $\Winf$\nbd-asphérique est $\W$\nbd-asphérique, et un foncteur $\Winf$\nbd-asphérique (resp. $\Winf$\nbd-coasphérique) est $\W$\nbd-asphérique (resp. $\W$\nbd-coasphérique). De même, si $\W$ est un localisateur fondamental distinct de $\Wtr$ et de $\Wgr$, toute catégorie $\W$\nbd-asphérique est $\Wzer$\nbd-asphérique, et tout foncteur $\W$\nbd-asphérique (resp. $\W$\nbd-co\-asphé\-rique) est $\Wzer$\nbd-asphérique (resp. $\Wzer$\nbd-coasphérique).
\end{paragr}

\section{Théorie élémentaire des carrés exacts homotopiques}

\begin{paragr} \titparagr{\boldmath Carrés dans $\Cat$} \label{defcar}
On appellera \emph{carré} un \og$2$-diagramme\fg{} dans $\Cat$ de la forme
\[
\UseTwocells
\xymatrixcolsep{2.5pc}
\xymatrixrowsep{2.4pc}
\xymatrix{
&A'\ar[d]_{u'}\ar[r]^{v}
\drtwocell<\omit>{\alpha}
&A\ar[d]^{u}
\\
&B'\ar[r]_{w}
&B
&\hskip -35pt,
}
\] 
autrement dit, la donnée de quatre petites catégories $A,A',B,B'$, de quatre foncteurs $u:A\toto B$, $u':A'\toto B'$, $v:A'\toto A$, $w:B'\toto B$, et d'un morphisme de foncteurs $\alpha:uv\toto wu'$. On dira que ce carré est \emph{commutatif} si $\alpha=\id{uv}$, ce qui implique en particulier que $uv=wu'$, et on dira qu'il est \emph{cartésien}, s'il est commutatif, et si le morphisme induit de $A'$ vers le produit fibré $B'\times_BA$ est un isomorphisme de catégories. Enfin, on dira qu'il est un \emph{carré comma} si le morphisme canonique de $A'$ vers la catégorie comma $u\comma w$ est un isomorphisme. On rappelle que la catégorie $u\comma v$ a comme objets les triplets $(a,b',g)$, où $a$ est un objet de $A$, $b'$ un objet de $B'$, et $g:u(a)\toto w(b')$ une flèche de $B$, un morphisme d'un objet $(a_1,b'_1,g_1)$ vers un autre $(a_2,b'_2,g_2)$ étant un couple $(f,g')$, où $f:a_1\toto a_2$ est une flèche de $A$ et $g':b'_1\toto b'_2$ une flèche de $B'$, tel que le carré
\[
\xymatrix{
u(a_1)\ar[r]^{g_1}\ar[d]_{u(f)}
&w(b'_1)\ar[d]^{w(g')}
\\
u(a_2)\ar[r]_{g_2}
&w(b'_2)
}
\]
soit commutatif. Le morphisme canonique de $A'$ vers $u\comma w$ associe à un objet $a'$ de $A'$ le triplet $(v(a'),u'(a'),\alpha_{a'})$ et à une flèche $f'$ de $A'$ le couple $(v(f'),u'(f'))$.
\end{paragr}

\begin{paragr} \titparagr{Foncteurs induits par un carré} \label{fonctindcar}
Soit
\[
\UseTwocells
\xymatrixcolsep{2.5pc}
\xymatrixrowsep{2.4pc}
\xymatrix{
&A'\ar[d]_{u'}\ar[r]^{v}
\drtwocell<\omit>{\alpha}
&A\ar[d]^{u}
\\
&B'\ar[r]_{w}
&B
&
}
\] 
un carré de $\Cat$. Pour tout objet $b'$ de $B'$, le morphisme $v$ induit un foncteur $\cm{A'}{b'}\toto\cm{A}{w(b')}$, associant à un objet $(a',g':u'(a')\toto b')$ de $\cm{A'}{b'}$, l'objet
\[
\bigl(v(a'),\, uv(a')\Toto{2.99}{w(g')\alpha_{a'}}w(b')\bigr)
\]
de $\cm{A}{w(b')}$, et à une flèche $f'$, la flèche $v(f')$. Dualement, pour tout objet $a$ de $A$, le foncteur $u'$ induit un foncteur $\mc{a}{A'}\toto\mc{u(a)}{B'}$, associant à un objet $(a',f:a\toto v(a'))$ de $\mc{a}{A'}$, l'objet
\[
\bigl(u'(a'),\,u(a)\Toto{3}{\alpha_{a'}u(f)}wu'(a')\bigr)
\]
de $\mc{u(a)}{B'}$, et à une flèche $f'$, la flèche $u'(f')$. On remarquera que ces foncteurs induits dépendent, non seulement des foncteurs $v$ et $u'$ respectivement, \emph{mais aussi de la transformation naturelle} $\alpha$, même si on ne le précise pas quand aucune ambiguïté n'en résulte.
\end{paragr}

\noindent
{\it Dans la suite, on se fixe, une fois pour toutes, un localisateur
fondamental faible $\W$.}

\begin{prop} \label{carcarex}
Soit 
\[
\UseTwocells
\xymatrixcolsep{2.5pc}
\xymatrixrowsep{2.4pc}
\xymatrix{
A'\ar[d]_{u'}\ar[r]^{v}
\drtwocell<\omit>{\alpha}
&A\ar[d]^{u}
\\
B'\ar[r]_{w}
&B
}
\] 
un carré dans $\Cat$. Les conditions suivantes sont équivalentes:
\begin{GMitemize}
\item[\rm a)] pour tout objet $b'$ de $B'$, le foncteur $\cm{A'}{b'}\toto\cm{A}{w(b')}$, induit par $v$, est $\W$\nobreakdash-coasphérique;
\item[\rm b)] pour tout objet $a$ de $A$, le foncteur $\mc{a}{A'}\toto\mc{u(a)}{B'}$, induit par $u'$, est $\W$\nobreakdash-asphérique;
\item[\rm c)] pour tout objet $a$ de $A$, tout objet $b'$ de $B'$, et toute flèche $g:u(a)\toto w(b')$ de $A$, la catégorie $A'_{(a,b',g)}$ dont les objets sont les triplets $(a',f,g')$, où $a'$ est un objet de $A'$, $f:a\toto v(a')$ une flèche de $A$ et $g':u'(a')\toto b'$ une flèche de $B'$, tels que $w(g')\alpha_{a'}u(f)=g$, et les morphismes
\[
(a'_1,f_1,g'_1)\Toto{1.99}{f'}(a'_2,f_2,g'_2)
\]
les flèches $f':a'_1\toto a'_2$ de $A'$ rendant commutatifs les deux triangles suivants
\[
\xymatrixrowsep{.5pc}
\xymatrix{
&&v(a'_1)\ar[dd]^{v(f')}
&&u'(a'_1)\ar[dd]_{u'(f')}\ar[rd]^-{g'_1}
\\
&a\ar[ru]^-{f_1}\ar[rd]_-{f_2}
&&&&b'
\\
&&v(a'_2)
&&u'(a'_2)\ar[ru]_-{g'_2}
&&\hskip -30pt,
}
\]
est $\W$-asphérique.
\end{GMitemize}
\end{prop}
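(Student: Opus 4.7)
Le plan est d'observer que les trois conditions se ram\`enent, apr\`es d\'evissage des d\'efinitions, \`a demander qu'une m\^eme cat\'egorie --- \`a savoir la cat\'egorie $A'_{(a,b',g)}$ d\'ecrite explicitement en~(c) --- soit $\W$\nbd-asph\'erique pour chaque triplet $(a,b',g:u(a)\toto w(b'))$. Toute la preuve consiste donc \`a identifier, pour chacune des conditions (a) et (b), la cat\'egorie comma pertinente avec $A'_{(a,b',g)}$.

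J'\'etablirais d'abord l'\'equivalence (a)~$\Longleftrightarrow$~(c). Par d\'efinition de la $\W$\nbd-coasph\'ericit\'e (\cf~\ref{asphcoasph}), la condition (a) revient \`a demander que, pour tout $b'\in\Ob B'$ et tout objet $(a,g:u(a)\toto w(b'))$ de la cat\'egorie $\cm{A}{w(b')}$, la cat\'egorie $\mc{(a,g)}{\cm{A'}{b'}}$ --- prise relativement au foncteur induit par $v$ d\'ecrit en~\ref{fonctindcar} --- soit $\W$\nbd-asph\'erique. Or un objet de celle-ci est la donn\'ee d'un $(a',g':u'(a')\toto b')$ de $\cm{A'}{b'}$ et d'une fl\`eche $(a,g)\toto(v(a'),w(g')\alpha_{a'})$ dans $\cm{A}{w(b')}$, c'est-\`a-dire d'une fl\`eche $f:a\toto v(a')$ v\'erifiant $w(g')\alpha_{a'}u(f)=g$; un d\'evissage analogue des morphismes redonne exactement les deux triangles commutatifs de l'\'enonc\'e, d'o\`u un isomorphisme canonique $\mc{(a,g)}{\cm{A'}{b'}}\simeq A'_{(a,b',g)}$.

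L'\'equivalence (b)~$\Longleftrightarrow$~(c) se traiterait de mani\`ere sym\'etrique : par d\'efinition de la $\W$\nbd-asph\'ericit\'e, (b) \'equivaut \`a demander que, pour tout $a\in\Ob A$ et tout objet $(b',g)$ de $\mc{u(a)}{B'}$, la cat\'egorie $\cm{\mc{a}{A'}}{(b',g)}$, relative au foncteur induit par $u'$, soit $\W$\nbd-asph\'erique. Un objet en est un $(a',f:a\toto v(a'))$ de $\mc{a}{A'}$ muni d'une fl\`eche $(u'(a'),\alpha_{a'}u(f))\toto(b',g)$ dans $\mc{u(a)}{B'}$, soit d'un $g':u'(a')\toto b'$ tel que $w(g')\alpha_{a'}u(f)=g$; on retrouve \`a nouveau la cat\'egorie $A'_{(a,b',g)}$. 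La preuve est essentiellement un d\'evissage syst\'ematique des d\'efinitions et ne pr\'esente pas d'obstacle technique; la seule vigilance requise porte sur la prise en compte correcte de la transformation naturelle $\alpha$ dans les foncteurs induits d\'ecrits en~\ref{fonctindcar}, laquelle fait appara\^{\i}tre de fa\c{c}on identique la relation $w(g')\alpha_{a'}u(f)=g$ dans (a) comme dans (b).
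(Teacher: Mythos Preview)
Your proposal is correct and follows essentially the same approach as the paper's own proof: both establish (a)~$\Leftrightarrow$~(c) via the isomorphism $\mc{(a,g)}{(\cm{A'}{b'})}\simeq A'_{(a,b',g)}$ and (b)~$\Leftrightarrow$~(c) via $\cm{(\mc{a}{A'})}{(b',g)}\simeq A'_{(a,b',g)}$, reducing everything to the $\W$\nbd-asph\'ericit\'e of this common category. Your version merely spells out the unpacking of the comma categories in more detail than the paper, which dispatches it as \og une v\'erification imm\'ediate\fg.
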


\begin{proof}
Une vérification immédiate montre que pour tout objet $b'$ de $B'$, et tout objet $(a,g:u(a)\toto w(b'))$ de $\cm{A}{w(b')}$, la catégorie $\mc{(a,g)}{(\cm{A'}{b'})}$ est isomorphe à la catégorie $A_{(a,b',g)}$, ce qui prouve l'équivalence des conditions~(\emph{a}) et~(\emph{c}). Dualement, pour tout objet $a$ de $A$, et tout objet $(b',g:u(a)\toto w(b'))$ de $\mc{u(a)}{B'}$ la catégorie $\cm{(\mc{a}{A'})}{(b',g)}$ est isomorphe à la catégorie $A_{(a,b',g)}$, ce qui achève la démonstration.
\end{proof}

\begin{paragr} \titparagr{\boldmath Carrés $\W$-exacts} \label{defcarex}
On dit qu'un carré est $\W$-\emph{exact} s'il satisfait aux conditions équivalentes de la proposition ci-dessus. 
Les carrés exacts étudiés par Guitart~\cite{Guit1,Guit2,GuitSplit,Guit3,Guit4,Guit5} sont exactement les carrés $\Wzer$\nbd-exacts~\cite[théorème~1.3]{Guit1}, qu'on appellera aussi \emph{carrés exacts au sens de Guitart}~\cite[section 4]{BKGM}, ou plus simplement \emph{carrés exacts de Guitart}.
\end{paragr}

\begin{paragr} \titparagr{\boldmath Exemples triviaux de carrés $\W$-exacts} \label{extrcarex}
Un carré de la forme
\[
\UseTwocells
\xymatrixcolsep{2.5pc}
\xymatrixrowsep{2.4pc}
\xymatrix{
A\ar[d]_{u}\ar[r]^{}
\drtwocell<\omit>{}
&e\ar[d]^{}\ar@<-2ex>@{}[d]_(.55){=}\ar@{}[d]^{\hbox{\kern 30pt(resp.}}
\\
B\ar[r]_{}
&e
}
\quad
\xymatrix{
A\ar[d]_{}\ar[r]^{u}
\drtwocell<\omit>{}
&B\ar[d]^{\hbox{\kern 20pt),}}\ar@<-2ex>@{}[d]_(.55){=}
\\
e\ar[r]_{}
&e
}
\]
où $e$ désigne la catégorie ponctuelle, est $\W$\nbd-exact si et seulement si le foncteur $u$ est $\W$\nbd-asphérique (resp. $\W$\nbd-coasphérique). En particulier, un carré de la forme
\[
\UseTwocells
\xymatrixcolsep{2.5pc}
\xymatrixrowsep{2.4pc}
\xymatrix{
A\ar[d]_{}\ar[r]^{}
\drtwocell<\omit>{}
&e\ar[d]^{}\ar@<-2ex>@{}[d]_(.55){=}
\\
e\ar[r]_{}
&e
}
\]
est $\W$-exact si et seulement si la catégorie $A$ est $\W$\nbd-asphérique.
\end{paragr}

\begin{prop} \label{carexop}
Pour qu'un carré
\[
\UseTwocells
\xymatrixcolsep{2.5pc}
\xymatrixrowsep{2.4pc}
\xymatrix{
A'\ar[d]_{u'}\ar[r]^{v}
\drtwocell<\omit>{\alpha}
&A\ar[d]^{u}
\\
B'\ar[r]_{w}
&B
}
\] 
soit $\W$\nbd-exact, il faut et il suffit que le carré
\[
\UseTwocells
\xymatrixcolsep{2.5pc}
\xymatrixrowsep{2.4pc}
\xymatrix{
&\op{A'}\ar[r]^{\op{u'}}\ar[d]_{\op{v}}
\drtwocell<\omit>{\op{\alpha}}
&\op{B'}\ar[d]^{\op{w}}
\\
&\op{A}\ar[r]_{\op{u}}
&\op{B}
&\hskip -35pt,
}
\] 
obtenu du précédent par passage aux catégories opposées, soit $\W$\nbd-exact.
\end{prop}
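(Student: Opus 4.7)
Mon plan est de montrer que la condition~(a) de la proposition~\ref{carcarex}, appliqu�e au carr� oppos�, �quivaut � la condition~(b) de cette m�me proposition, appliqu�e au carr� initial. Comme ces deux conditions caract�risent chacune l'exactitude de leur carr� respectif, cela �tablira l'�nonc�.

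J'identifierai d'abord les r�les des foncteurs dans le carr� oppos�: le r�le de $v$ y est tenu par $\op{u'}$, celui de $u$ par $\op{w}$, et le foncteur allant de $\op{A}$ vers $\op{B}$ est $\op{u}$. La condition~(a) appliqu�e au carr� oppos� affirme ainsi que, pour tout objet $b'$ de $\op{A}$, c'est-�-dire pour tout objet de $A$, le foncteur
\[
\cm{\op{A'}}{b'}\toto\cm{\op{B'}}{\op{u}(b')}\smsp,
\]
induit par $\op{u'}$, est $\W$\nbd-coasph�rique. J'invoquerai ensuite les isomorphismes canoniques $\op{(\mc{b'}{A'})}\simeq\cm{\op{A'}}{b'}$ et $\op{(\mc{u(b')}{B'})}\simeq\cm{\op{B'}}{\op{u}(b')}$ rappel�s en~\ref{asphcoasph} pour identifier ce foncteur � l'oppos� du foncteur
\[
\mc{b'}{A'}\toto\mc{u(b')}{B'}\smsp,
\]
induit par $u'$, qui figure dans la condition~(b) pour le carr� initial. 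L'�quivalence cherch�e en r�sultera imm�diatement, puisque d'apr�s~\ref{asphcoasph} un foncteur est $\W$\nbd-coasph�rique si et seulement si son oppos� est $\W$\nbd-asph�rique.

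L'unique point non trivial sera la v�rification de cette identification de foncteurs. Elle se fera par un calcul direct � partir des formules explicites de~\ref{fonctindcar}: en substituant le quintuplet $(v,u,u',w,\alpha)$ par $(\op{u'},\op{w},\op{v},\op{u},\op{\alpha})$ et en tenant compte de ce que la composition dans $\op{B}$ est oppos�e � celle dans $B$, la formule $w(g')\alpha_{a'}$ donnant l'image d'un objet se traduit en $\alpha_{a'}u(g')$, qui est pr�cis�ment la formule apparaissant dans le foncteur induit par $u'$ dans le carr� initial. L'action sur les fl�ches, ne d�pendant pas d'$\alpha$, se v�rifie de m�me, de sorte qu'aucune difficult� conceptuelle ne se pr�sente au-del� de ce d�roulement routinier des d�finitions.
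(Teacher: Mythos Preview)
Your argument is correct and follows essentially the same route as the paper's proof: both rest on the observation that condition~(a) of Proposition~\ref{carcarex} for the opposite square is equivalent to condition~(b) for the original square, via the canonical isomorphisms $\op{(\mc{a}{A'})}\simeq\cm{\op{A'}}{a}$ and the asph\'erique/coasph\'erique duality of~\ref{asphcoasph}. The paper states this in two lines without unfolding the identification, whereas you spell out the substitution and the formula check explicitly, which is a welcome elaboration but not a different idea.
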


\begin{proof}
La proposition résulte aussitôt de l'équivalence des conditions~(\emph{a}) et~(\emph{b}) de la proposition~\ref{carcarex}, et du fait qu'un foncteur est $\W$\nbd-coasphérique si et seulement si le foncteur obtenu par passage aux catégories opposées est $\W$\nbd-asphérique (\cf~\ref{asphcoasph}).
\end{proof}

\begin{prop} \label{compcarhex}
La classe des carrés $\W$\nbd-exacts est stable par composition horizontale et verticale.
\end{prop}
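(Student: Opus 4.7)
Mon plan est d'exploiter la caract�risation~(\emph{a}) de la proposition~\ref{carcarex} pour la composition horizontale, et de d�duire le cas vertical par dualit�: d'apr�s la proposition~\ref{carexop}, le carr� oppos� d'une composition verticale de carr�s est la composition horizontale des carr�s oppos�s pris dans l'ordre ad�quat, ce qui ram�ne l'un � l'autre.

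Pour la composition horizontale de deux carr�s $\W$\nbd-exacts
\[
\UseTwocells
\xymatrixcolsep{2.5pc}
\xymatrixrowsep{2.4pc}
\xymatrix{
A''\ar[d]_{u''}\ar[r]^{v_2}
\drtwocell<\omit>{\alpha_2}
&A'\ar[d]_{u'}\ar[r]^{v_1}
\drtwocell<\omit>{\alpha_1}
&A\ar[d]^{u}
\\
B''\ar[r]_{w_2}
&B'\ar[r]_{w_1}
&B
}
\]
dont la $2$\nbd-cellule compos�e $\alpha:uv_1v_2\toto w_1w_2u''$ s'obtient par coll� des $\alpha_i$, de composantes $\alpha_{a''}=w_1(\alpha_{2,a''})\,\alpha_{1,v_2(a'')}$ pour $a''$ dans $A''$, la premi�re �tape est de v�rifier que, pour tout objet $b''$ de $B''$, le foncteur $\cm{A''}{b''}\toto\cm{A}{w_1w_2(b'')}$ induit par le carr� compos� co�ncide avec la compos�e
\[
\cm{A''}{b''}\toto\cm{A'}{w_2(b'')}\toto\cm{A}{w_1w_2(b'')}
\]
des foncteurs induits respectivement par le carr� de gauche en $b''$ et par le carr� de droite en $w_2(b'')$. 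Gr�ce aux formules explicites du paragraphe~\ref{fonctindcar}, cela se r�duit, sur un objet $(a'',g'':u''(a'')\toto b'')$, � la v�rification de l'identit�
\[
w_1w_2(g'')\,\alpha_{a''}=w_1\bigl(w_2(g'')\,\alpha_{2,a''}\bigr)\,\alpha_{1,v_2(a'')}\smsp,
\]
qui r�sulte imm�diatement de la d�finition de $\alpha$ et de la fonctorialit� de $w_1$; la compatibilit� sur les fl�ches est tout aussi directe.

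Une fois cette identification acquise, la conclusion est automatique: chacun des deux facteurs est $\W$\nbd-coasph�rique par l'hypoth�se d'exactitude de chacun des deux carr�s, et la classe des foncteurs $\W$\nbd-coasph�riques est stable par composition (\cf~\ref{asphcoasph}). Le seul point r�ellement � v�rifier dans toute la preuve est ainsi le calcul de compatibilit� des foncteurs induits avec la composition horizontale des carr�s; il est routinier mais crucial, et je m'attends � ce qu'il constitue l'unique obstacle.
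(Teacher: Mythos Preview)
Your proof is correct and follows essentially the same approach as the paper: use criterion~(\emph{a}) of Proposition~\ref{carcarex} to reduce horizontal stability to the fact that the induced functor on slice categories for the composite square factors as the composite of the two induced functors (each $\W$\nbd-coasph�rique), and then deduce vertical stability from the horizontal case via Proposition~\ref{carexop}. Your explicit verification of the factorization is slightly more detailed than the paper's, which simply asserts it and leaves the check to the reader.
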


\begin{proof}
Montrons par exemple la stabilité par composition horizontale. Considérons donc deux carrés composables horizontalement et leur composé
\[
\xymatrixrowsep{1.1pc}
\xymatrixcolsep{.9pc}
\UseAllTwocells
\xymatrix{
A''\ddrrcompositemap<\omit>{\alpha'}
  \ar[rr]^{v'}
  \ar[dd]_{u''}
&&A'\ddrrcompositemap<\omit>{\alpha}
  \ar[rr]^{v}
  \ar[dd]^{u'}
&&A\ar[dd]^u
&&&A''\ddrrcompositemap<\omit>{\alpha''}
  \ar[rr]^{v''=vv'}
  \ar[dd]_{u''}
&&A\ar[dd]^u
\\
\\
B''\ar[rr]_{w'}
&&B'\ar[rr]_w
&&B
&&&B''\ar[rr]_{w''=ww'\,\,}
&&B
&,
\\
&\mathcal D'
&&\mathcal D
&&&&&\kern -10pt \mathcal D\circh\mathcal D'\kern -10pt
}
\]
avec $\alpha'' =(w\star\alpha')(\alpha\star v')$, 
supposons que les carrés $\mathcal D$ et $\mathcal D'$ soient $\W$\nbd-exacts, et montrons qu'il en est de même du carré $\mathcal D\circh\mathcal D'$. Soit $b''$ un objet de $B''$. On vérifie aussitôt que le foncteur $\cm{A''}{b''}\toto\cm{A}{w''(b'')}$, induit par la flèche $v''$ du carré $\mathcal D\circh\mathcal D'$, est le composé 
\[
\cm{A''}{b''}\toto\cm{A'}{w'(b'')}\toto\cm{A}{w(w'(b''))}=\cm{A}{w''(b'')}
\]
des foncteurs induits par les flèches $v'$ et $v$ des carrés $\mathcal D$ et $\mathcal D'$ respectivement. L'assertion résulte donc du critère (\emph{a}) de la proposition~\ref{carcarex}, et de la stabilité des foncteurs $\W$\nbd-coasphériques par composition (\cf~\ref{asphcoasph}). La stabilité des carrés $\W$\nbd-exacts par composition verticale résulte de ce qui précède et de la proposition~\ref{carexop}.
\end{proof}

\begin{prop} \label{desccarhexh}
Soient $J$ un ensemble, et 
$$
\xymatrixrowsep{.7pc}
\xymatrixcolsep{1.3pc}
\UseAllTwocells
\xymatrix{
&A'\ddrrcompositemap<\omit>{\alpha}
  \ar[rr]^{v}
  \ar[dd]_{u'}
&&A\ar[dd]^u
&&&A_j\ddrrcompositemap<\omit>{\ \alpha_j}
  \ar[rr]^{v_j}
  \ar[dd]_{u_j}
&&A'\ar[dd]^{u'}
\\
{\mathcal D}\ =
&&&&\hbox{et}
&{\mathcal D_j}\ =
&&&&,\ j\in J\ ,
\\
&B'\ar[rr]_w
&&B
&&&B_j\ar[rr]_{w_j}
&&B'
}
$$
des carrés dans $\Cat$. On suppose que pour tout $j\in J$ le carré $\mathcal D_j$ est $\W$\nbd-exact, et que 
$\Ob\,B'=\bigcup\limits_{j\in J}w_j(\Ob\,B_j)$.
Alors les conditions suivantes sont équivalentes:
\begin{GMitemize}
\item[\rm a)] le carré $\mathcal D$ est $\W$\nbd-exact;
\item[\rm b)] pour tout $j$, $j\in J$, le carré composé $\mathcal D\circh\mathcal D_j$ est $\W$\nbd-exact.
\end{GMitemize}
\end{prop}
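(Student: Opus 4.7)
The plan is to dispatch (a) $\Rightarrow$ (b) immediately from the stabilité par composition horizontale établie dans la proposition~\ref{compcarhex}, puisque chaque $\mathcal D_j$ est supposé $\W$\nbd-exact. La direction non triviale est (b) $\Rightarrow$ (a), et je l'attaquerais en utilisant le critère~(a) de la proposition~\ref{carcarex}.

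À cette fin, je fixe un objet $b'$ quelconque de $B'$ et je cherche à montrer que le foncteur $\cm{A'}{b'} \toto \cm{A}{w(b')}$ induit par $v$ est $\W$\nbd-coasphérique. Par l'hypothèse $\Ob B' = \bigcup_{j\in J} w_j(\Ob B_j)$, il existe un indice $j \in J$ et un objet $b_j \in B_j$ tels que $w_j(b_j) = b'$. Comme on l'a déjà constaté dans la preuve de la proposition~\ref{compcarhex}, le foncteur $\cm{A_j}{b_j} \toto \cm{A}{(w w_j)(b_j)}$ induit par la flèche supérieure $v v_j$ du carré composé $\mathcal D \circh \mathcal D_j$ se factorise canoniquement en
$$\cm{A_j}{b_j} \toto \cm{A'}{w_j(b_j)} = \cm{A'}{b'} \toto \cm{A}{w(b')},$$
la première flèche étant le foncteur induit par $v_j$ dans $\mathcal D_j$ et la seconde celui induit par $v$ dans $\mathcal D$.

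La $\W$\nbd-exactitude de $\mathcal D_j$ (critère~(a) de~\ref{carcarex}) fournit que la première flèche de cette factorisation est $\W$\nbd-coasphérique, et l'hypothèse (b) appliquée à $\mathcal D \circh \mathcal D_j$ fournit que la composée des deux est $\W$\nbd-coasphérique. La propriété \og deux sur trois\fg{} des foncteurs coasphériques dans un triangle commutatif, rappelée au paragraphe~\ref{asphcoasph} (à savoir: lorsque la première branche est coasphérique, la composée l'est si et seulement si la seconde branche l'est), impose alors que la seconde flèche soit $\W$\nbd-coasphérique, ce qui est le but recherché, puisque $b' \in B'$ était arbitraire.

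L'argument est essentiellement formel et je ne vois pas d'obstacle sérieux: l'hypothèse de surjectivité sert uniquement à convertir un objet quelconque $b' \in B'$ en un objet $b_j$, tandis que~\ref{asphcoasph} fournit la conclusion \og deux sur trois\fg{} dans la bonne direction. Le seul point qui mérite vigilance est l'orientation de cette règle: il faut disposer de la coasphéricité de la \textbf{première branche} et de la composée pour conclure celle de la seconde branche, ce qui est précisément la formulation rappelée en~\ref{asphcoasph}. Toute la subtilité de l'énoncé est donc concentrée dans l'observation élémentaire de décomposition des foncteurs induits, déjà acquise lors de la preuve de~\ref{compcarhex}.
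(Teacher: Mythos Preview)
Your proof is correct and follows essentially the same approach as the paper's own proof: dispatch (a) $\Rightarrow$ (b) via la proposition~\ref{compcarhex}, then for (b) $\Rightarrow$ (a) fix $b'\in B'$, lift it to some $b_j\in B_j$, factor the induced functor on comma categories through $\cm{A'}{b'}$, and apply the two-out-of-three property for $\W$\nbd-coaspheric functors from~\ref{asphcoasph}.
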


\begin{proof}
L'implication (\emph{a}) $\Rightarrow$ (\emph{b}) résulte de la proposition \ref{compcarhex}. Montrons la réciproque. Soit $b'$ un objet de $B'$. Par hypothèse, il existe $j\in J$ et un objet $b_j$ de $B_j$ tels que $b'=w_j(b_j)$. Comme les carrés $\mathcal D_j$ et $\mathcal D\circh\mathcal D_j$ sont $\W$\nbd-exacts, il résulte du critère (\emph{a}) de la proposition~\ref{carcarex} que le morphisme $\cm{B_j}{b_j}\toto\cm{B'}{b'}$, ainsi que le morphisme composé
\[
\cm{B_j}{b_j}\toto\cm{B'}{b'}\toto\cm{B}{w(b')}
\]
sont $\W$\nbd-coasphériques. Il en est donc de même du morphisme $\cm{B'}{b'}\toto\cm{B}{w(b')}$ (\cf~\ref{asphcoasph}). Une nouvelle application du critère (\emph{a}) de la proposition~\ref{carcarex} montre alors que le carré $\mathcal D$ est $\W$\nbd-exact.
\end{proof}

\begin{rem} \label{desccarhexv}
On laisse le soin au lecteur d'énoncer et prouver l'assertion duale de la proposition précédente, concernant la composition verticale.
\end{rem}

\begin{prop} \label{carexcomma}
Tout carré comma est $\W$-exact.
\end{prop}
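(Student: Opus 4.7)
Le plan est de v\'erifier la condition~(c) de la proposition~\ref{carcarex}. Pour un carr\'e comma $A'=u\comma w$, un objet de $A'_{(a,b',g)}$ se lit comme un quintuplet $((a_1,b_1',h_1),f,g')$ o\`u $f:a\toto a_1$, $h_1:u(a_1)\toto w(b_1')$ et $g':b_1'\toto b'$ satisfont $w(g')h_1u(f)=g$, et il s'agit de montrer que cette cat\'egorie est $\W$\nbd-asph\'erique.

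J'introduirai la sous\nbd-cat\'egorie pleine $C\subset A'_{(a,b',g)}$ form\'ee des objets pour lesquels $a_1=a$ et $f=\id{a}$, la condition se r\'eduisant alors \`a $w(g')h_1=g$. On v\'erifie imm\'ediatement que $C$ admet un objet final $T=((a,b',g),\id{a},\id{b'})$~: pour tout objet $X=((a,b_1',h_1),\id{a},g')$ de $C$, la seule fl\`eche $X\toto T$ est $(p,q)=(\id{a},g')$, la compatibilit\'e dans $u\comma w$ r\'esultant aussit\^ot de l'\'equation d\'efinissant $X$. La cat\'egorie $C$ est donc $\W$\nbd-asph\'erique par l'axiome LB.

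Je d\'efinirai ensuite un foncteur $r:A'_{(a,b',g)}\toto C$ par
\[
r\bigl((a_1,b_1',h_1),f,g'\bigr)=\bigl((a,b_1',h_1\,u(f)),\id{a},g'\bigr)\smsp,
\]
bien d\'efini puisque $w(g')\cdot h_1u(f)=g$. Un calcul direct montre que $r$ est adjoint \`a droite de l'inclusion $\iota:C\hookrightarrow A'_{(a,b',g)}$~: l'unit\'e est l'identit\'e puisque $r\iota=\id{C}$, et la co\nbd-unit\'e $\iota r\Rightarrow\id{A'_{(a,b',g)}}$ a pour composante en $((a_1,b_1',h_1),f,g')$ le couple $(f,\id{b_1'})$, qui d\'efinit bien une fl\`eche de $u\comma w$ (car $h_1u(f)=w(\id{b_1'})\cdot h_1u(f)$) et qui satisfait les deux conditions triangulaires requises.

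Comme $\iota$ admet un adjoint \`a droite, il sera $\W$\nbd-asph\'erique (\cf~\ref{asphcoasph}), donc une $\W$\nbd-\'equivalence. Appliquer FS2 au triangle $C\toto A'_{(a,b',g)}\toto e$, dont les deux autres fl\`eches seront dans $\W$, donnera alors l'asph\'ericit\'e de $A'_{(a,b',g)}$. L'obstacle principal est que $A'_{(a,b',g)}$ ne poss\`ede pas d'objet final en g\'en\'eral, ce qui rend indispensable le recours \`a la sous\nbd-cat\'egorie $C$ et \`a l'adjonction $\iota\dashv r$ pour transporter l'asph\'ericit\'e de $C$ sur $A'_{(a,b',g)}$.
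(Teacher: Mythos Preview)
Your proof is correct and slightly more self-contained than the paper's. Both arguments verify criterion~(\emph{c}) of Proposition~\ref{carcarex} by showing that the category $A'_{(a,b',g)}$ is $\W$\nbd-asph\'erique, and both single out the same ``base point'' $T=((a,b',g),\id{a},\id{b'})$. The difference lies in how asphericity is established. You pass through the full subcategory $C$ of objects with $f=\id{a}$, show it has $T$ as final object, and exhibit the inclusion as a left adjoint; the paper instead constructs a \emph{d\'ecalage} $\id{}\Rightarrow D\Leftarrow P$ (a zigzag of natural transformations from the identity to a constant endofunctor, passing through the endofunctor $D$ that projects onto the ``dual'' subcategory where $g'=\id{b'}$), and then invokes the external result~\cite[proposition~3.6]{CiGM} that such a d\'ecalage forces $\Winf$\nbd-asphericity. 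Your adjunction argument avoids this external reference and relies only on facts already established in~\ref{asphcoasph}; the paper's d\'ecalage, on the other hand, makes the contractibility more explicitly ``geometric'' and is a technique that recurs elsewhere in the homotopy theory of small categories.
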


\begin{proof}
Soient $A,B,B'$ trois catégories, $u:A\toto B$, $w:B'\toto B$ deux foncteurs, et formons le carré comma
\[
\UseTwocells
\xymatrixcolsep{2.5pc}
\xymatrixrowsep{2.4pc}
\xymatrix{
&A'\ar[d]_{u'}\ar[r]^{v}
\drtwocell<\omit>{\alpha}
&A\ar[d]^{u}
\\
&B'\ar[r]_{w}
&B
&\hskip -30pt,
}
\]
la catégorie $A'$ ayant comme objets les triplets $(a,\,b',\,g:u(a)\toto w(b'))$, $a\in \Ob\, A$, \hbox{$b'\in \Ob\, B'$}, $g\in\Fl\, B$, une flèche $(a^{}_1,b'_1,g^{}_1)\toto(a^{}_2,b'_2,g^{}_2)$ étant un couple $(f,g')$, \hbox{$f:a^{}_1\toto a^{}_2\in\Fl\, A$}, $g':b'_1\toto b'_2\in\Fl\, B'$, tel que $g^{}_2u(f)=w(g')g^{}_1$, et les foncteurs $v$, $u'$ et la transformation naturelle $\alpha$ étant définis par les formules
\[
\begin{aligned}
&v(a,b',g)=a\smsp,\quad\hskip 5pt(a,b',g)\in\Ob\, A'\smsp,\qquad v(f,g')=f\smsp,\quad\hskip 5.5pt(f,g')\in\Fl\, A'\smsp,\\
\noalign{\vskip 3pt plus 1pt minus 1pt}
&u'(a,b',g)=b'\smsp,\quad(a,b',g)\in\Ob\, A'\smsp,\qquad u'(f,g')=g'\smsp,\quad(f,g')\in\Fl\, A'\smsp,\\
\noalign{\vskip 3pt plus 1pt minus 1pt}
&\alpha_{(a,b',g)}=g\smsp,\quad\hskip 12pt(a,b',g)\in\Ob\, A'\smsp.
\end{aligned}
\]
Pour montrer que ce carré est $\W$\nbd-exact, on va utiliser le critère (\emph{c}) de la proposition~\ref{carcarex}. Dans les notations de ce critère, il s'agit de prouver que pour tout objet $a^{}_0$ de $A$, tout objet $b'_0$ de $B'$, et toute flèche $g^{}_0:u(a^{}_0)\toto w(b'_0)$ de $B$, la catégorie $C=A'_{(a^{}_0,b'_0,g^{}_0)}$ est $\W$\nbd-asphérique. Les objets de $C$ 
sont les quintuplets
\[
(a,\ b',\ u(a)\Toto{1.3}{g}w(b'),\ a^{}_0\Toto{1.3}{f}a,\ b'\Toto{1.3}{g'}b'_0)\smsp, 
\]
avec $a\in\Ob\,A$, $b'\in\Ob\,B'$, $g\in\Fl\,B$, $f\in\Fl\,A$, $g'\in\Fl\,B'$, tels que 
\[
w(g')\,g\,u(f)=g^{}_0\smsp,
\]
un morphisme
\[
(a^{}_1,\,b'_1,\,g^{}_1,\,f^{}_1,\,g'_1)\toto(a^{}_2,\,b'_2,\,g^{}_2,\,f^{}_2,\,g'_2)
\]
étant un couple $(f,g')$, $f:a^{}_1\toto a^{}_2\in\Fl\,A$, $g':b'_1\toto b'_2$, tel que les diagrammes
\[
\xymatrixrowsep{.5pc}
\xymatrix{
&a^{}_1\ar[dd]^{f}
&&u(a^{}_1)\ar[r]^{g^{}_1}\ar[dd]_{u(f)}
&w(b'_1)\ar[dd]^{w(g')}
&&b'_1\ar[dd]_{g'}\ar[rd]^{g'_1}
\\
a^{}_0\ar[ru]^{f^{}_1}\ar[rd]_{f^{}_2}
&&&&&&&b'_0
\\
&a^{}_2
&&u(a^{}_2)\ar[r]_{g^{}_2}
&w(b'_2)
&&b'_2\ar[ru]_{g'_2}
}
\]
soient commutatifs. On va construire un décalage sur $C$
\[
\id{C}\Toto{1.3}{\alpha\,}D\Otot{1.3}{\,\,\beta}K
\]
(diagramme d'endofoncteurs de $C$ et transformations naturelles, avec $K$ endofoncteur constant~\cite[3.1]{CiGM}), ce qui impliquera que la catégorie $C$ est $\Winf$\nbd-asphérique~\cite[proposition~3.6]{CiGM} et en particulier $\W$\nbd-asphérique (\cf~\ref{changloc}).
\smallbreak

\noindent\textbf{\boldmath Définition du foncteur $D$.} 
L'endofoncteur $D$ est défini par les formules:
\[
\begin{aligned}
&D(a,b,g,f,g')=(a,b'_0,w(g')g,f,\id{b'_0})\smsp,\quad(a,b,g,f,g')\in\Ob\,C\smsp,\\
\noalign{\vskip 3pt plus 1pt minus 1pt}
&D(f,g')=(f,\id{b'_0})\smsp,\hskip 93.7pt(f,g')\in\Fl\,C\smsp.
\end{aligned}
\]
\smallbreak

\noindent\textbf{\boldmath Définition du morphisme de foncteurs $\alpha$.} La transformation naturelle $\alpha:\id{C}\toto D$ est définie par l'égalité:
\[
\alpha_{(a,b,g,f,g')}=(\id{a},g')\smsp,\qquad(a,b,g,f,g')\in\Ob\,C\smsp.
\]
\smallbreak

\noindent\textbf{\boldmath Définition du foncteur $P$.} Le foncteur $P$ est l'endofoncteur constant de $C$ défini par l'objet 
\vskip -15pt
\[
(a^{}_0,\,b'_0,\,u(a^{}_0)\Toto{1.3}{g^{}_0}w(b'_0),\,\id{a^{}_0},\,\id{b'_0})\smsp.
\]
\smallbreak

\noindent\textbf{\boldmath Définition du morphisme de foncteurs $\beta$.} La transformation naturelle $\beta:P\toto D$ est définie par l'égalité
\[
\beta_{(a,b,g,f,g')}=(f,\id{b'_0})\smsp,\qquad(a,b,g,f,g')\in\Ob\,C\smsp.
\]
On laisse le soin au lecteur de vérifier que ces formules définissent bien des endofoncteurs de $C$ et des transformations naturelles.
\end{proof}

\begin{rem}
Guitart montre que tout carré \emph{cocomma} est $\Wzer$\nbd-exact~\cite{Guit1}. Ce résultat ne se généralise \emph{pas} aux carrés $\W$\nbd-exacts, pour un localisateur fondamental distinct de $\Wzer$, $\Wgr$ et $\Wtr$. En effet, alors $\W\subsetneqq\Wzer$ (\cf~\ref{defWgr}), et la stabilité par sommes de $\W$ (\cf~\ref{stablocfond}) implique l'existence d'une petite catégorie $A$, $0$\nbd-connexe mais non $\W$\nbd-asphérique. Or, pour toute catégorie $0$\nbd-connexe $A$, le carré 
\[
\UseAllTwocells
\xymatrixcolsep{.0pc}
\xymatrixrowsep{.3pc}
\xymatrix{
A\ar[dddd]_{}\ar[rrrrrr]^{}
&&&&&&e\ar[dddd]^{0}
\\&&&
\ddrrcompositemap<\omit>{}
&&&
\\&&&&&&
\\&&&&&&
\\
e\ar[rrrrr]_-{1}
&&&&&\{0\ar[rr]
&{\phantom{\}}}
&1\}
}
\]
est un carré cocomma. En revanche, en vertu du critère (\emph{b}) de la proposition~\ref{carcarex}, ce carré n'est \emph{pas} $\W$\nbd-exact si la catégorie $A$ n'est pas $\W$\nbd-asphérique.
\end{rem}

\begin{prop} \label{proprelisseex}
Soit
\[
\mathcal{D}\ =\quad
\raise 23pt
\vbox{
\UseTwocells
\xymatrixcolsep{2.5pc}
\xymatrixrowsep{2.4pc}
\xymatrix{
A'\ar[d]_{u'}\ar[r]^{v}
\drtwocell<\omit>{}
&A\ar[d]^{u}\ar@<-2ex>@{}[d]_(.55){=}
\\
B'\ar[r]_{w}
&B
}
}\qquad
\]
un carré cartésien de $\Cat$. Si $u$ est $\W$-propre ou si $w$ est $\W$\nbd-lisse, alors le carré $\mathcal D$ est $\W$\nbd-exact.
\end{prop}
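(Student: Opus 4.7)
Mon plan est de se ramener à un seul cas par dualité, puis de vérifier le critère~(\emph{c}) de la Proposition~\ref{carcarex} en exhibant une sous-catégorie asphérique judicieusement choisie.

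D'abord, la Proposition~\ref{carexop} permet de se ramener à un seul cas. L'opposé d'un carré cartésien est encore cartésien, et le passage aux catégories opposées échange les rôles de $u$ et $w$. Comme un foncteur est $\W$\nbd-propre si et seulement si son opposé est $\W$\nbd-lisse (\cf~\ref{defproprelisse}), les deux hypothèses «~$u$ $\W$\nbd-propre~» et «~$w$ $\W$\nbd-lisse~» sont échangées par cette dualité. Il suffit donc de traiter le cas où $w$ est $\W$\nbd-lisse.

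Supposons alors $w$ $\W$\nbd-lisse. J'utiliserai le critère~(\emph{c}) de la Proposition~\ref{carcarex}: étant donnés $a\in\Ob A$, $b'\in\Ob B'$, et $g:u(a)\to w(b')$, je devrai montrer que la catégorie $X:=A'_{(a,b',g)}$ est $\W$\nbd-asphérique. Comme $\alpha=\id{}$ et $A'=B'\times_B A$, un objet de $X$ s'identifie à un quadruplet $(a_0,b'_0,f,g')$ avec $u(a_0)=w(b'_0)$, $f:a\to a_0$ dans $A$, $g':b'_0\to b'$ dans $B'$, vérifiant $w(g')u(f)=g$. La construction clé consiste à considérer la sous-catégorie pleine $X_0\subset X$ formée des objets avec $a_0=a$ et $f=\id{a}$. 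Une vérification directe identifie canoniquement $X_0$ à la tranche $\cm{B'_{u(a)}}{(b',g)}$, qui par la $\W$\nbd-lissité de~$w$ est $\W$\nbd-asphérique.

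Il restera à montrer que l'inclusion $\iota:X_0\hookrightarrow X$ est elle-même $\W$\nbd-asphérique. Pour chaque objet $x=(a_0,b'_0,f,g')\in X$, je calculerai la tranche $\cm{X_0}{x}$ et l'identifierai canoniquement à $\cm{B'_{u(a)}}{(b'_0,u(f))}$, qui est $\W$\nbd-asphérique par la $\W$\nbd-lissité de~$w$. Ainsi $\iota$ sera une $\W$\nbd-équivalence par~LC${}_{\mathrm f}$, et, combinée avec l'asphéricité de~$X_0$, l'axiome~FS2 fournira que $X$ est $\W$\nbd-asphérique, d'où la $\W$\nbd-exactitude du carré. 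Le point technique le plus délicat sera cette identification de $\cm{X_0}{x}$: il faudra vérifier que la composante dans~$A$ d'un morphisme $\iota(y)\to x$ de~$X$ est forcée par la condition triangulaire à coïncider avec~$f$, de sorte que seule reste libre la composante dans~$B'$, assujettie à la condition $w(\beta)=u(f)$, ce qui correspond exactement à un objet d'une tranche de $B'_{u(a)}$ dans $\mc{u(a)}{B'}$.
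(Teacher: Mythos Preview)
Your proof is correct and takes a genuinely different route from the paper's.

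The paper treats the case $u$ $\W$\nbd-propre first and uses criterion~(\emph{b}) of Proposition~\ref{carcarex}: it invokes an external result (\cite[proposition~3.2.8]{Ast}) asserting that for $u$ propre the functor $\mc{a}{A}\to\mc{u(a)}{B}$ is a \emph{universal} $\W$\nbd-equivalence, then observes that the induced square on coslices is cartesian (\cite[lemme~3.2.11]{Ast}), so $\mc{a}{A'}\to\mc{u(a)}{B'}$ is a $\W$\nbd-equivalence after this base change, hence $\W$\nbd-aspherical. The smooth case follows by duality.

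You instead treat the smooth case directly via criterion~(\emph{c}), and your argument is entirely self-contained: you only use the \emph{definition} of $\W$\nbd-smoothness (asphericity of the fiber-to-coslice functors $B'_b\to\mc{b}{B'}$), applied twice---once for $(b',g)$ to show $X_0$ is aspherical, and once for $(b'_0,u(f))$ to show each $\cm{X_0}{x}$ is aspherical. The key observation, which you correctly anticipate, is that in $\cm{X_0}{x}$ the $A$\nbd-component of the morphism to $x$ is forced to equal $f$, and the datum $g'_1$ is then determined as $g'\circ\beta$ (with the condition $w(g'_1)=g$ becoming automatic), leaving exactly a pair $(b'_1,\beta)$ with $b'_1\in B'_{u(a)}$ and $w(\beta)=u(f)$.

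What each approach buys: the paper's proof is shorter but imports the nontrivial fact that propreness yields \emph{universal} equivalences on coslices; yours is more elementary and stays within the paper's own development, at the cost of a more explicit bookkeeping of the category $A'_{(a,b',g)}$.
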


\begin{proof}
Supposons que $u$ soit $\W$-propre, et soit $a$ un objet de $A$. La flèche $\mc{a}{A}\toto\mc{u(a)}{B}$ est alors une $\W$-équivalence universelle~\cite[proposition 3.2.8]{Ast}. Comme le carré
\[
\xymatrixcolsep{1.8pc}
\xymatrixrowsep{2.pc}
\xymatrix{
&\mc{a}{A'}\ar[r]\ar[d]
&\mc{a}{A}\ar[d]
\\
&\mc{u(a)}{B'}\ar[r]
&\mc{u(a)}{B}
&\hskip -30pt,
}
\]
induit par $\mathcal D$, est cartésien~\cite[lemme 3.2.11]{Ast}, il en est de même de la flèche $\mc{a}{A'}\toto\mc{u(a)}{B'}$, qui est donc en particulier $\W$\nbd-asphérique. Le critère (\emph{b}) de la proposition~\ref{carcarex} implique alors que le carré $\mathcal D$ est $\W$\nbd-exact. Le cas où $w$ est lisse se déduit de ce qui précède et de la proposition~\ref{carexop} (\cf~\ref{defproprelisse}).
\end{proof}

\begin{prop} \label{carpropreex}
Soit $u:A\toto B$ une flèche de $\Cat$. Les conditions suivantes sont équivalentes:
\begin{GMitemize}
\item[\rm a)] $u$ est $\W$-propre;
\item[\rm b)] tout carré cartésien de la forme 
\[
\UseTwocells
\xymatrixcolsep{2.5pc}
\xymatrixrowsep{2.4pc}
\xymatrix{
A'\ar[d]_{u'}\ar[r]^{v}
\drtwocell<\omit>{}
&A\ar[d]^{u}\ar@<-2ex>@{}[d]_(.55){=}
\\
B'\ar[r]_{w}
&B
}
\]
est $\W$-exact.
\end{GMitemize}
\end{prop}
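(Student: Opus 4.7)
The plan is to observe that one implication is essentially a restatement of Proposition~\ref{proprelisseex}, while the converse follows by specializing to a judicious choice of cartesian square.

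For (a) $\Rightarrow$ (b): this is the content of Proposition~\ref{proprelisseex}, applied to an arbitrary functor $w:B'\to B$. Nothing further is required.

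For (b) $\Rightarrow$ (a): I would fix an object $b$ of $B$, let $\lceil b\rceil:e\to B$ denote the functor picking out $b$, and form the cartesian square
\[
\UseTwocells
\xymatrixcolsep{2.5pc}
\xymatrixrowsep{2.4pc}
\xymatrix{
A^{}_b\ar[d]_{}\ar[r]^{j^{}_b}
\drtwocell<\omit>{}
&A\ar[d]^{u}\ar@<-2ex>@{}[d]_(.55){=}
\\
e\ar[r]_{\lceil b \rceil}
&B
&\hskip -30pt,
}
\]
whose upper-left corner is canonically identified with the fibre $A^{}_b$. By hypothesis this square is $\W$\nbd-exact. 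I would then invoke the criterion~(\emph{a}) of Proposition~\ref{carcarex}: for the unique object $\ast$ of $e$, the induced functor $\cm{A^{}_b}{\ast}\toto\cm{A}{b}$ is $\W$\nbd-coasph\'erique. Since $e$ has only one object and only the identity morphism, $\cm{A^{}_b}{\ast}$ is canonically isomorphic to $A^{}_b$, and under this identification, the formulas of paragraph~\ref{fonctindcar} applied to the identity $2$\nbd-cell show that the induced functor is precisely the canonical morphism $A^{}_b\toto\cm{A}{b}$, $a\mapstoto(a,\id{b}:u(a)\toto b)$, appearing in the definition of a foncteur $\W$\nbd-propre (\cf~\ref{defproprelisse}). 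Varying $b$ over $\Ob\,B$ yields the $\W$\nbd-propret\'e of $u$.

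The only point requiring care is the identification of the induced functor with the canonical fibre-to-comma map; this is a routine unwinding of the formulas in~\ref{fonctindcar}, so I do not anticipate any genuine obstacle.
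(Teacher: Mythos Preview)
Your proof is correct and follows exactly the same approach as the paper: (a)~$\Rightarrow$~(b) by Proposition~\ref{proprelisseex}, and (b)~$\Rightarrow$~(a) by specializing to the cartesian square over $b:e\to B$ and invoking criterion~(\emph{a}) of Proposition~\ref{carcarex} to conclude that $A_b\to\cm{A}{b}$ is $\W$\nbd-coasph\'erique. The only difference is that you spell out the identification of the induced functor with the canonical fibre-to-comma map, which the paper leaves implicit.
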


\begin{proof}
L'implication (\emph{a}) $\Rightarrow$ (\emph{b}) résulte de la proposition~\ref{proprelisseex}. Pour prouver l'implication (\emph{b}) $\Rightarrow$ (\emph{a}), soit $b$ un objet de $B$, et considérons le carré cartésien 
\[
\UseTwocells
\xymatrixcolsep{2.5pc}
\xymatrixrowsep{2.4pc}
\xymatrix{
&A_b\ar[r]\ar[d]
\drtwocell<\omit>{}
&A\ar[d]^{u}\ar@<-2ex>@{}[d]_(.55){=}
\\
&e\ar[r]_{b}
&B
&\hskip -30pt,
}
\]
où $b:e\toto B$ désigne le foncteur de la catégorie ponctuelle vers $B$, défini par l'objet $b$ de $B$, et $A_b$ la fibre de $u$ en $b$. En vertu de la condition (\emph{b}), ce carré est $\W$\nbd-exact, et il résulte alors du critère (\emph{a}) de la proposition~\ref{carcarex} que la flèche $A_b\toto\cm{A}{b}$ est $\W$\nbd-coasphérique, ce qui prouve que $u$ est $\W$\nbd-propre.
\end{proof}

\begin{rem} \label{carlisseex}
On laisse le soin au lecteur d'énoncer et prouver l'assertion duale de la proposition précédente, caractérisant les foncteurs lisses.
\end{rem}

\begin{prop} \label{changloccarex}
Si $\W$ et $\W'$ sont deux localisateurs fondamentaux faibles tels que $\W\subset\W'$, alors tout carré $\W$\nbd-exact est $\W'$\nbd-exact.
\end{prop}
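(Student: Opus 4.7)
Le plan est d'appliquer directement le crit�re (\emph{a}) de la proposition~\ref{carcarex} pour les deux localisateurs fondamentaux faibles $\W$ et $\W'$, et d'invoquer la monotonie des classes de foncteurs coasph�riques par rapport � l'inclusion de localisateurs fondamentaux faibles, rappel�e au paragraphe~\ref{changloc}.

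Plus pr�cis�ment, soit
\[
\UseTwocells
\xymatrixcolsep{2.5pc}
\xymatrixrowsep{2.4pc}
\xymatrix{
A'\ar[d]_{u'}\ar[r]^{v}
\drtwocell<\omit>{\alpha}
&A\ar[d]^{u}
\\
B'\ar[r]_{w}
&B
}
\]
un carr� $\W$\nbd-exact. Par le crit�re~(\emph{a}) de la proposition~\ref{carcarex}, pour tout objet $b'$ de $B'$, le foncteur induit $\cm{A'}{b'}\toto\cm{A}{w(b')}$ est $\W$\nbd-coasph�rique. L'hypoth�se $\W\subset\W'$ et l'observation du paragraphe~\ref{changloc} assurent alors qu'il est encore $\W'$\nbd-coasph�rique. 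Appliquant une seconde fois le crit�re~(\emph{a}) de la proposition~\ref{carcarex}, cette fois-ci pour le localisateur $\W'$, on conclut que le carr� est $\W'$\nbd-exact.

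Il n'y a pas de r�elle difficult� ici: toute la substance a �t� absorb�e dans la caract�risation interne de la $\W$\nbd-exactitude fournie par la proposition~\ref{carcarex}, et dans le fait �l�mentaire que la classe des foncteurs coasph�riques cro�t avec le localisateur. La seule chose � v�rifier est qu'on applique bien le crit�re dans les deux sens pour le \emph{m�me} foncteur induit $\cm{A'}{b'}\toto\cm{A}{w(b')}$ (qui ne d�pend que du carr� de $\Cat$ donn�, et non du localisateur fondamental choisi), ce qui est imm�diat au vu de la d�finition rappel�e au paragraphe~\ref{fonctindcar}.
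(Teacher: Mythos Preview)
Your proof is correct and follows exactly the same approach as the paper: the paper's proof is a one-line reference to the criteria of proposition~\ref{carcarex} together with the observations of~\ref{changloc}, and you have simply spelled out the argument using criterion~(\emph{a}) explicitly.
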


\begin{proof}
La proposition est conséquence immédiate des critères de la proposition~\ref{carcarex}, et des considérations de~\ref{changloc}.
\end{proof}

\begin{paragr} \titparagr{Carrés de Beck-Chevalley} \label{BeckChev}
On dit qu'un carré
\[
\mathcal{D}\ =\quad
\raise 23pt
\vbox{
\UseTwocells
\xymatrixcolsep{2.5pc}
\xymatrixrowsep{2.4pc}
\xymatrix{
A'\ar[d]_{u'}\ar[r]^{v}
\drtwocell<\omit>{\alpha}
&A\ar[d]^{u}
\\
B'\ar[r]_{w}
&B
}
}\qquad
\]
de $\Cat$ est de \emph{Beck-Chevalley à gauche} si $u$ et $u'$ admettent des adjoints \emph{à droite} $r$ et $r'$ respectivement, et si le morphisme canonique \og de changement de base\fg{} $c:vr'\toto rw$, défini par la formule
\[
\raise -20pt
\hbox{
$c=(rw\star\varepsilon')(r\star\alpha\star r')(\eta\star vr')$
}
\hskip 40pt
\xymatrix{
vr'\ar[r]^{c}\ar[d]_{\eta\star vr'}
&rw
\\
ruvr'\ar[r]_{r\star\alpha\star r'}
&rwu'r'\ar[u]_{rw\star\varepsilon'}
&\hskip -30pt,
}
\]
où
\[
\varepsilon:ur\toto\id{B}\smsp,\quad\eta:\id{A}\toto ru\smsp,\qquad\quad
\varepsilon':u'r'\toto\id{B'}\smsp,\quad\eta':\id{A'}\toto r'u'
\]
désignent les morphismes d'adjonction, est un isomorphisme. On vérifie facilement que cette propriété est indépendante du choix des foncteurs adjoints $r$ et $r'$, ainsi que du choix des morphismes d'adjonction. Dualement, on dit que le carré $\mathcal D$ est de \emph{Beck-Chevalley à droite} si le carré obtenu par passage aux catégories opposées est de Beck-Chevalley à gauche. Cela revient à demander que les foncteurs $v$ et $w$ admettent des adjoints \emph{à gauche} $v'$ et $w'$ respectivement, et que le morphisme canonique \hbox{$c':w'u\toto u'v'$}, défini de façon analogue, est un isomorphisme. Si les foncteurs $u$ et $u'$ admettent des adjoints à droite \emph{et} les foncteurs $v$ et $w$ des adjoints à gauche, alors le carré $\mathcal D$ est de Beck-Chevalley à droite si et seulement si il est de Beck-Chevalley à gauche, puisque alors $c$ et $c'$ sont transposés l'un de l'autre.
\end{paragr}

\begin{prop} \label{carcarBeckChev}
Soit 
\[
\mathcal{D}\ =\quad
\raise 23pt
\vbox{
\UseTwocells
\xymatrixcolsep{2.5pc}
\xymatrixrowsep{2.4pc}
\xymatrix{
A'\ar[d]_{u'}\ar[r]^{v}
\drtwocell<\omit>{\alpha}
&A\ar[d]^{u}
\\
B'\ar[r]_{w}
&B
}
}\qquad
\] 
un carré dans $\Cat$ tel que $u$ et $u'$ admettent des adjoints à droite. Alors les conditions suivantes sont équivalentes:
\begin{GMitemize}
\item[\rm a)] le carré $\mathcal D$ est de Beck-Chevalley à gauche;
\item[\rm b)] le carré $\mathcal D$ est $\Winf$\nbd-exact;
\item[\rm c)] le carré $\mathcal D$ est $\Wzer$\nbd-exact;
\item[\rm d)] pour tout objet $a$ de $A$, le foncteur $\mc{a}{A'}\toto\mc{u(a)}{B'}$, induit par $u'$, admet un adjoint à droite;
\item[\rm e)] pour tout objet $a$ de $A$, tout objet $b'$ de $B'$, et toute flèche $g:u(a)\toto w(b')$ de $A$, la catégorie $A'_{(a,b',g)}$ \emph{(du critère (\emph{c}) de la proposition~\ref{carcarex})} admet un objet final.
\end{GMitemize}
\end{prop}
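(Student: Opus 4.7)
Le plan est d'\'etablir les cinq \'equivalences selon le sch\'ema (d) $\Leftrightarrow$ (e), (d) $\Rightarrow$ (b) $\Rightarrow$ (c), (c) $\Rightarrow$ (e), et enfin (a) $\Leftrightarrow$ (d). L'\'equivalence (d) $\Leftrightarrow$ (e) d\'ecoulera de la d\'emonstration de la proposition~\ref{carcarex}, o\`u l'on a v\'erifi\'e que la cat\'egorie $A'_{(a,b',g)}$ s'identifie naturellement \`a $\cm{(\mc{a}{A'})}{(b',g)}$; puisqu'un foncteur admet un adjoint \`a droite si et seulement si les cat\'egories comma au-dessus de chaque objet du but admettent un objet final, (e) n'est qu'une reformulation de (d). Pour (d) $\Rightarrow$ (b), on invoquera le fait qu'un foncteur admettant un adjoint \`a droite est $\W$\nbd-asph\'erique pour tout localisateur fondamental faible $\W$ (\cf~\ref{asphcoasph}), en particulier pour $\W=\Winf$, d'o\`u la $\Winf$\nbd-exactitude du carr\'e via le crit\`ere~(b) de la proposition~\ref{carcarex}; puis (b) $\Rightarrow$ (c) r\'esultera de l'inclusion $\Winf\subset\Wzer$ et de la proposition~\ref{changloccarex}.

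L'\'equivalence (a) $\Leftrightarrow$ (d) se traitera par le calcul classique de \emph{mates}. L'id\'ee est que le foncteur induit $u'_*:\mc{a}{A'}\toto\mc{u(a)}{B'}$ envoie $(a',f)$ sur $(u'(a'),\alpha^{}_{a'}u(f))$, et le candidat naturel pour l'image d'un objet $(b',g)$ par un adjoint \`a droite est $\bigl(r'(b'),\,c_{b'}^{-1}\widehat{g},\,\varepsilon'_{b'}\bigr)$, o\`u $\widehat{g}:a\toto rw(b')$ d\'esigne l'adjoint de $g$ sous $u\dashv r$ et $c$ le morphisme de Beck-Chevalley de~\ref{BeckChev}. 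On v\'erifiera que cette construction fournit effectivement un adjoint \`a droite si et seulement si chaque $c_{b'}$ est inversible, l'identit\'e-cl\'e \`a exploiter \'etant $\varepsilon^{}_{w(b')}u(c_{b'})=w(\varepsilon'_{b'})\alpha^{}_{r'(b')}$, relation standard reliant $\alpha$ \`a son \og mate\fg{} $c$.

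Le point principal, et sans doute le plus d\'elicat, sera l'implication (c) $\Rightarrow$ (e). Sous (c), la cat\'egorie $C=A'_{(a,b',g)}$ est $\Wzer$\nbd-asph\'erique, donc non vide et $0$\nbd-connexe; on construira un objet final explicite $(r'(b'),f^{}_0,\varepsilon'_{b'})$ en tirant parti simultan\'ement de la connexit\'e et des deux adjonctions. Choisissant un objet $(a'_1,f^{}_1,g'_1)$ de $C$ (gr\^ace \`a la non-vacuit\'e), on posera $f^{}_0=v(\widehat{g'_1})\,f^{}_1:a\toto vr'(b')$, o\`u $\widehat{g'_1}:a'_1\toto r'(b')$ est l'adjoint de $g'_1$ sous $u'\dashv r'$. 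L'ind\'ependance de $f^{}_0$ par rapport au choix r\'esultera de la connexit\'e de $C$ combin\'ee \`a la naturalit\'e de la bijection d'adjonction: pour tout morphisme $\phi:(a'_1,f^{}_1,g'_1)\toto(a'_2,f^{}_2,g'_2)$ de $C$, les \'egalit\'es $\widehat{g'_1}=\widehat{g'_2}\circ\phi$ et $v(\phi)f^{}_1=f^{}_2$ donneront $v(\widehat{g'_1})f^{}_1=v(\widehat{g'_2})f^{}_2$. La v\'erification que $(r'(b'),f^{}_0,\varepsilon'_{b'})$ appartient bien \`a $C$, c'est-\`a-dire que $w(\varepsilon'_{b'})\alpha^{}_{r'(b')}u(f^{}_0)=g$, reposera sur la naturalit\'e de $\alpha$ et l'identit\'e triangulaire $\varepsilon'_{b'}u'(\widehat{g'_1})=g'_1$. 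Enfin, l'unicit\'e et l'existence d'un morphisme de tout objet $(a',f,g')$ de $C$ vers $(r'(b'),f^{}_0,\varepsilon'_{b'})$ s'en d\'eduiront imm\'ediatement: la condition $\varepsilon'_{b'}u'(\phi)=g'$ contraint $\phi$ \`a \^etre $\widehat{g'}$, et l'\'egalit\'e requise $v(\widehat{g'})f=f^{}_0$ est exactement l'ind\'ependance vis-\`a-vis du choix d\'ej\`a \'etablie, appliqu\'ee \`a $(a'_1,f^{}_1,g'_1)=(a',f,g')$.
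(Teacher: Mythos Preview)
Ton sch\'ema (d) $\Leftrightarrow$ (e), (d) $\Rightarrow$ (b) $\Rightarrow$ (c), (c) $\Rightarrow$ (e) est correct et ferme bien la boucle entre (b), (c), (d), (e); ta preuve de (c) $\Rightarrow$ (e) par construction explicite d'un objet final $(r'(b'),f_0,\varepsilon'_{b'})$ est d'ailleurs tr\`es propre. En revanche, ton traitement de ``(a) $\Leftrightarrow$ (d)'' ne donne r\'eellement que (a) $\Rightarrow$ (d). Ta phrase ``cette construction fournit effectivement un adjoint \`a droite si et seulement si chaque $c_{b'}$ est inversible'' ne prouve pas (d) $\Rightarrow$ (a): la formule $(b',g)\mapsto(r'(b'),c_{b'}^{-1}\widehat{g})$ n'a de sens que si $c_{b'}$ est d\'ej\`a inversible, et (d) affirme seulement l'existence d'\emph{un} adjoint \`a droite, pas qu'il est donn\'e par cette formule-l\`a. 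Il manque donc dans ton plan une implication vers (a).

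Le papier comble cette lacune en prouvant directement (c) $\Rightarrow$ (a): via les isomorphismes $\cm{A'}{b'}\simeq\cm{A'}{r'(b')}$ et $\cm{A}{w(b')}\simeq\cm{A}{rw(b')}$ d\'eduits des adjonctions, le foncteur induit par $v$ devient $(a',h)\mapsto(v(a'),c_{b'}v(h))$; un choix astucieux d'objets dans deux cat\'egories comma sp\'ecifiques, combin\'e \`a la $0$\nbd-connexit\'e, exhibe un inverse bilat\`ere de $c_{b'}$. Tu peux aussi, et c'est peut-\^etre plus dans l'esprit de ta preuve, d\'eduire (a) de ta construction pour (c) $\Rightarrow$ (e): en utilisant l'identit\'e-cl\'e que tu mentionnes et la naturalit\'e de $\alpha$, on v\'erifie que ton $f_0$ satisfait $c_{b'}f_0=\widehat{g}$; la surjectivit\'e de $(c_{b'})_*:\Hom_A(a,vr'(b'))\to\Hom_A(a,rw(b'))$ en r\'esulte, et l'injectivit\'e vient de ce que si $c_{b'}f=c_{b'}f'$, alors $(r'(b'),f,\varepsilon'_{b'})$ et $(r'(b'),f',\varepsilon'_{b'})$ sont deux objets de la m\^eme cat\'egorie $A'_{(a,b',g)}$, et ta formule $f_0=v(\widehat{\varepsilon'_{b'}})f=f$ (resp. $=f'$) combin\'ee \`a l'ind\'ependance du choix donne $f=f'$. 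Yoneda conclut.
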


\begin{proof}
Soit $a$ un objet de $A$. Dire que le foncteur $\mc{a}{A'}\toto\mc{u(a)}{B'}$ admet un adjoint à droite équivaut à dire que pour tout objet $(b',\,g:u(a)\toto w(b'))$ de $\mc{u(a)}{B'}$, la catégorie $\cm{(\mc{a}{A'})}{(b',g)}$ admet un objet final. Comme cette dernière est isomorphe à la catégorie $A'_{(a,b',g)}$, cela prouve l'équivalence des conditions~(\emph{d}) et~(\emph{e}). L'implication (\emph{d}) $\Rightarrow$ (\emph{b}) résulte du critère (\emph{b}) de la proposition~\ref{carcarex}, et du fait qu'un foncteur admettant un adjoint à droite est $\Winf$\nbd-asphérique (\cf~\ref{asphcoasph}). L'implication (\emph{b}) $\Rightarrow$ (\emph{c}) est un cas particulier de la proposition~\ref{changloccarex}.
\smallbreak

Il reste à prouver les implications (\emph{a}) $\Rightarrow$ (\emph{d}) et (\emph{c}) $\Rightarrow$ (\emph{a}). Choisissons $r$ et $r'$ des adjoints à droite de $u$ et $u'$ respectivement, et des morphismes d'adjonction
\[
\varepsilon:ur\toto\id{B}\smsp,\quad\eta:\id{A}\toto ru\smsp,\qquad\quad
\varepsilon':u'r'\toto\id{B'}\smsp,\quad\eta':\id{A'}\toto r'u'\smsp.
\]
La condition (\emph{a}) signifie que le morphisme de foncteurs
\[
c=(rw\star\varepsilon')(r\star\alpha\star r')(\eta\star vr'):vr'\toto rw
\]
est un isomorphisme. Pour tout objet $a$ de $A$, on vérifie alors facilement que le foncteur
\[
\mc{u(a)}{B'}\toto\mc{a}{A'}\smsp,\qquad \bigl(b',u(a)\Toto{1.3}{g}w(b')\bigr)\,\mapstoto\,\bigl(r'(b'),\,a\Toto{3.5}{c_{b'}^{-1}r(g)\eta^{}_a\,}vr'(b')\bigr)\smsp,
\]
est un adjoint à droite du foncteur $\mc{a}{A'}\toto\mc{u(a)}{B'}$, induit par $u'$, ce qui prouve l'implication (\emph{a}) $\Rightarrow$ (\emph{d}).
\smallbreak

L'implication (\emph{c}) $\Rightarrow$ (\emph{a}) résultera de la théorie générale des dérivateurs (\cf~remarque~\ref{excarBeckChev}). Voici néanmoins une preuve élémentaire. Soit $b'$ un objet de $B'$. On observe que les isomorphismes $\cm{A'}{b'}\simeq\cm{A'}{r'(b')}$ et $\cm{A}{w(b')}\simeq\cm{A}{rw(b')}$, déduits des adjonctions, identifient le foncteur $\cm{A'}{b'}\toto\cm{A}{w(b')}$, induit par $v$, au foncteur
\[
\cm{A'}{r'(b')}\toto\cm{A}{rw(b')}\smsp,\qquad\bigl(a',\,a'\Toto{1.3}{f'}r'(b')\bigr)\,\mapstoto\,\bigl(v(a'),\,v(a')\Toto{2.5}{c^{}_{b'}v(f')}rw(b')\bigr)\smsp.
\]
En vertu du critère (\emph{a}) de la proposition~\ref{carcarex}, la condition (\emph{c}) signifie que pour tout objet $(a,\,f_0:a\toto rw(b'))$ de $\cm{A}{rw(b')}$, la catégorie $\mc{(a,f_0)}{\bigl(\cm{A'}{r'(b')}\bigr)}$ est $0$\nbd-connexe. Décrivons cette dernière. Ses objets sont les triplets 
\[
\bigl(a',\,a'\Toto{1.3}{f'}r(b'),\,a\Toto{1.3}{f}v(a')\bigr)\smsp,\qquad a'\in\Ob\,A'\smsp,\quad f'\in\Fl\,A'\smsp,\quad f\in\Fl\,A\smsp,
\]
tels que $f_0=c^{}_{b'}v(f')f$. Un morphisme de $(a'_1,f'_1,f^{}_1)$ vers $(a'_2,f'_2,f^{}_2)$ est une flèche $f':a'_1\toto a'_2$ de $A'$ rendant commutatifs les triangles suivants
\[
\xymatrixrowsep{.5pc}
\xymatrix{
&&v(a'_1)\ar[dd]^{v(f')}
&&a'_1\ar[dd]_{f'}\ar[rd]^-{f'_1}
\\
&a\ar[ru]^-{f_1}\ar[rd]_-{f_2}
&&&&r'(b')
\\
&&v(a'_2)
&&a'_2\ar[ru]_-{f'_2}
&&\hskip -30pt.
}
\]
En particulier cela implique que 
\begin{equation} \label{eqconn}
v(f'_2)f^{}_2=v(f'_1)f^{}_1\smsp,
\end{equation}
et comme la catégorie $\mc{(a,f_0)}{\bigl(\cm{A'}{r'(b')}\bigr)}$ est connexe, on a cette égalité pour \emph{tout} couple d'objets $(a'_1,f'_1,f^{}_1)$ et $(a'_2,f'_2,f^{}_2)$. Considérons l'objet final $(rw(b'),\id{rw(b')})$ de $\cm{A}{rw(b')}$. Comme la catégorie $\mc{(rw(b'),\id{rw(b')})}{\bigl(\cm{A'}{r'(b')}\bigr)}$ est $0$\nbd-connexe, elle est en particulier non vide. Soit donc $(a',f',f)$ un objet de cette catégorie, de sorte que $\id{rw(b')}=c^{}_{b'}v(f')f$. On va montrer qu'on a aussi $v(f')fc^{}_{b'}=\id{vr'(b')}$, ce qui prouvera que $c^{}_{b'}$ est un isomorphisme. Pour cela, considérons l'objet 
\[
\bigl(vr'(b'),\,vr'(b')\Toto{1.5}{c^{}_{b'}}rw(b')\bigr)
\]
de $\cm{A}{rw(b')}$ et les objets
\[
\begin{aligned}
&\bigl(r'(b'),\,r'(b')\Toto{3}{\id{r'(b')}}r'(b'),\,vr'(b')\Toto{3}{\id{vr'(b')}}vr'(b')\bigr)\smsp,\\
&\bigl(r'(b'),\,r'(b')\Toto{3}{\id{r'(b')}}r'(b'),\,vr'(b')\Toto{3}{v(f')fc^{}_{b'}}vr'(b')\bigr)\smsp
\end{aligned}
\]
de la catégorie $\mc{(vr'(b'),c^{}_{b'})}{\bigl(\cm{A'}{r'(b')}\bigr)}$.
L'égalité \ref{eqconn} implique alors l'assertion, ce qui achève la démonstration.
\end{proof}

\noindent
\emph{Dans la suite, on suppose que le localisateur fondamental faible $\W$ est un localisateur fondamental.}

\begin{prop} \label{carcarexf}
Soit 
\[
\mathcal{D}\ =\quad
\raise 23pt
\vbox{
\UseTwocells
\xymatrixcolsep{2.5pc}
\xymatrixrowsep{2.4pc}
\xymatrix{
A'\ar[d]_{u'}\ar[r]^{v}
\drtwocell<\omit>{\alpha}
&A\ar[d]^{u}
\\
B'\ar[r]_{w}
&B
&\hskip -30pt
}
}
\] 
un carré dans $\Cat$. Les conditions suivantes sont équivalentes:
\begin{GMitemize}
\item[\rm a)] pour tout objet $b'$ de $B'$, le foncteur $\cm{A'}{b'}\toto\cm{A}{w(b')}$, induit par $v$, est une $\W$\nobreakdash-équivalence colocalement sur $A$;
\item[\rm b)] pour tout objet $a$ de $A$, le foncteur $\mc{a}{A'}\toto\mc{u(a)}{B'}$, induit par $u'$, est une $\W$\nobreakdash-équivalence localement sur $B'$.
\end{GMitemize}
De plus, ces conditions sont impliquées par la condition:
\begin{GMitemize}
\item[\rm c)] le carré $\mathcal D$ est $\W$-exact;
\end{GMitemize}
et si le localisateur fondamental $\W$ n'est pas le localisateur fondamental grossier $\Wgr$, elles lui sont équivalentes.
\end{prop}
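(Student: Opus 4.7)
Le plan consiste à introduire, pour chaque couple $(a,b')\in\Ob\,A\times\Ob\,B'$, une catégorie discrète $D=\Hom^{}_B(u(a),w(b'))$ comme cible commune, à travers laquelle les foncteurs induits des conditions~(a) et~(b) se ramèneront tous deux à un même foncteur auxiliaire $\phi$.

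Premièrement, je vérifierai qu'on a des isomorphismes canoniques $\mc{a}{(\cm{A'}{b'})}\simeq\cm{(\mc{a}{A'})}{b'}\simeq C$, où $C$ désigne la catégorie dont les objets sont les triplets $(a',f,g')$ avec $a'\in\Ob\,A'$, $f:a\toto v(a')$ dans $A$, $g':u'(a')\toto b'$ dans $B'$, les morphismes $(a'_1,f_1,g'_1)\toto(a'_2,f_2,g'_2)$ étant les flèches $f':a'_1\toto a'_2$ de $A'$ satisfaisant $v(f')f_1=f_2$ et $g'_2u'(f')=g'_1$. Par naturalité de $\alpha$, la formule $(a',f,g')\mapstoto w(g')\alpha_{a'}u(f)$ définira un foncteur $\phi:C\toto D$, et une description analogue à celle apparaissant dans la preuve de~\ref{carcarex} donnera la décomposition $C=\bigsqcup_{g\in D}A'_{(a,b',g)}$, avec $\phi$ coproduit des projections $A'_{(a,b',g)}\toto\{g\}$.

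Deuxièmement, je construirai les foncteurs $\rho_1:\mc{a}{(\cm{A}{w(b')})}\toto D$, $(a_1,g_1,f)\mapstoto g_1u(f)$, et $\rho_2:\cm{(\mc{u(a)}{B'})}{b'}\toto D$, $(b'',g,h)\mapstoto w(h)g$. Le foncteur $\rho_1$ admet la section $g\mapstoto(a,g,\id{a})$ pour adjoint à gauche, et $\rho_2$ admet la section $g\mapstoto(b',g,\id{b'})$ pour adjoint à droite; en vertu de~\ref{asphcoasph}, $\rho_1$ est $\Winf$\nbd-coasphérique et $\rho_2$ est $\Winf$\nbd-asphérique, donc tous deux sont des $\W$\nbd-équivalences (\cf~\ref{changloc}). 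Les identités immédiates $\rho_1\circ F=\phi=\rho_2\circ G$, où $F:C\toto\mc{a}{(\cm{A}{w(b')})}$ et $G:C\toto\cm{(\mc{u(a)}{B'})}{b'}$ sont les foncteurs induits par $v$ et $u'$ dans les conditions (a) et (b), donneront par deux applications de FS2 que $F$, $\phi$ et $G$ sont simultanément des $\W$\nbd-équivalences, d'où l'équivalence (a)~$\iff$~(b). L'implication (c)~$\Rightarrow$~(a) en résultera aussitôt: si $\mathcal D$ est $\W$\nbd-exact, le critère~(a) de~\ref{carcarex} entraîne que le foncteur $\cm{A'}{b'}\toto\cm{A}{w(b')}$ est $\W$\nbd-coasphérique, donc \emph{a fortiori} une $\W$\nbd-équivalence colocalement sur $A$ (\cf~\ref{eqloccoloc}).

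Troisièmement, pour (a)~$\Rightarrow$~(c) sous l'hypothèse $\W\neq\Wgr$: si $\W=\Wtr$, tout est trivial. Sinon, $\W\subset\Wzer$ par~\ref{defWgr}, et alors $\phi:\bigsqcup_gA'_{(a,b',g)}\toto D$, qui est une $\W$\nbd-équivalence par ce qui précède, est en particulier une $\Wzer$\nbd-équivalence; la bijection induite sur les composantes connexes forcera la non-vacuité de chaque $A'_{(a,b',g)}$. Pour chaque $g_0\in D$, je choisirai un objet $\xi_0\in\Ob\,A'_{(a,b',g_0)}$ et définirai $r:C\toto A'_{(a,b',g_0)}$, identité sur $A'_{(a,b',g_0)}$ et constant de valeur $\xi_0$ sur les autres composantes; le diagramme évident dans $\Fl\,\Cat$ exhibera alors $A'_{(a,b',g_0)}\toto\{g_0\}$ comme rétract de $\phi$. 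La stabilité de $\W$ par rétracts (\cf~\ref{stablocfond}) donnera l'asphéricité de $A'_{(a,b',g_0)}$, ce qui, valant pour tout $(a,b',g_0)$, fournira l'exactitude de $\mathcal D$ via le critère~(c) de~\ref{carcarex}. L'obstacle principal se situe dans cette dernière étape, où il faudra combiner l'information globale \og $\phi\in\W$\fg{} avec l'argument de rétract, en utilisant crucialement l'hypothèse $\W\neq\Wgr$ (via le passage par $\Wzer$) pour garantir la non-vacuité des composantes $A'_{(a,b',g)}$.
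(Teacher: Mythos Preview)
Your proof is correct and, for the implication (a)$\,\Rightarrow\,$(c) under $\W\neq\Wgr$, follows the same strategy as the paper's: decompose over $\Hom^{}_B(u(a),w(b'))$, pass through $\Wzer$ to obtain nonemptiness of the components $A'_{(a,b',g)}$, and conclude by a retract argument (\cf~\ref{stablocfond}).

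Where your proof differs is in the treatment of (a)$\,\Leftrightarrow\,$(b). The paper handles $\Wgr$ by an ad hoc nonemptiness check on the two target categories $\mc{a}{(\cm{A}{w(b')})}$ and $\cm{(\mc{u(a)}{B'})}{b'}$, and for $\W\neq\Wgr$ deduces the equivalence only indirectly, by passing through~(c). Your factorization through the discrete category $D=\Hom^{}_B(u(a),w(b'))$, together with the observation that the comparison functors $\rho_1,\rho_2$ are $\W$\nbd-equivalences (each admitting a one-sided adjoint), yields a uniform direct proof of (a)$\,\Leftrightarrow\,$(b) valid for every localisateur fondamental $\W$, including $\Wgr$. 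This is a mild but genuine streamlining: it avoids the case distinction and makes the symmetric role of the categories $A'_{(a,b',g)}$ of Proposition~\ref{carcarex}~(c) more transparent. The paper's retract is formulated for the coproduct of functors $\mc{(a,g)}{(\cm{A'}{b'})}\toto\mc{(a,g)}{(\cm{A}{w(b')})}$ rather than for your $\phi:C\toto D$, but since each target component admits an initial object, the two formulations amount to the same thing.
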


\begin{proof}
En vertu des critères (\emph{a}) et (\emph{b}) de la proposition~\ref{carcarex}, il est immédiat que si le carré $\mathcal D$ est $\W$-exact, alors les conditions (\emph{a})  et (\emph{b}) ci-dessus sont satisfaites (\cf~\ref{eqloccoloc}).
\smallbreak

Pour montrer les autres affirmations, explicitons la condition (\emph{a}) (resp. (\emph{b})). Elle signifie que pour tout objet $a$ de $A$ et tout objet $b'$ de $B'$, le foncteur
\[
\mc{a}{\bigl(\cm{A'}{b'}\bigr)}\toto\mc{a}{\bigl(\cm{A}{w(b')}\bigr)}
\qquad\hbox{(resp. \ }
\cm{\bigl(\mc{a}{A'}\bigr)}{b'}\toto\cm{\bigl(\mc{u(a)}{B'}\bigr)}{b'}\ \ ),
\]
induit par $v$ (resp. par $u'$), est une $\W$\nbd-équivalence. On remarque que les catégories $\mc{a}{\bigl(\cm{A'}{b'}\bigr)}$ et $\cm{\bigl(\mc{a}{A'}\bigr)}{b'}$ sont canoniquement isomorphes, et que la catégorie $\mc{a}{\bigl(\cm{A}{w(b')}\bigr)}$ est non vide si et seulement si $\cm{\bigl(\mc{u(a)}{B'}\bigr)}{b'}$ l'est (dans les deux cas cela signifie que $\Hom^{}_B(u(a),w(b'))\neq\varnothing$), ce qui prouve l'équivalence des conditions (\emph{a}) et (\emph{b}) dans le cas du localisateur grossier $\Wgr$ (\cf~\ref{defWgr}). Pour achever la preuve de la proposition, il suffit donc de montrer que si $\W\neq\Wgr$, alors la condition (\emph{a}) implique la condition (\emph{c}), ce qui prouvera l'équivalence de ces deux conditions, et dualement celle de (\emph{b}) et (\emph{c}). Comme l'implication (\emph{a}) $\Rightarrow$ (\emph{c}) est évidente pour le localisateur trivial $\Wtr=\Fl\,\Cat$, il suffit de la montrer en supposant que $\W\subset\Wzer$ (\cf~\ref{defWgr}). Pour cela, on remarque que le foncteur $\mc{a}{\bigl(\cm{A'}{b'}\bigr)}\toto\mc{a}{\bigl(\cm{A}{w(b')}\bigr)}$ s'identifie canoniquement au foncteur somme
\[
\textstyle\coprod\limits_{g:u(a)\toto w(b')}\mc{(a,g)}{\bigl(\cm{A'}{b'}\bigr)}\hskip 8pt\Toto{2}{}
\textstyle\coprod\limits_{g:u(a)\toto w(b')}\mc{(a,g)}{\bigl(\cm{A}{w(b')}\bigr)}
\]
(où le couple $(a,g)$ est vu comme objet de $\cm{A}{w(b')}$). Si ce foncteur est une $\W$\nbd-équivalence, il est à plus forte raison une $\Wzer$\nbd-équivalence, et en particulier pour toute flèche $g:u(a)\toto w(b')$, la catégorie $\mc{(a,g)}{\bigl(\cm{A'}{b'}\bigr)}$ est non vide. Par suite, le foncteur $\mc{(a,g)}{\bigl(\cm{A'}{b'}\bigr)}\toto\mc{(a,g)}{\bigl(\cm{A}{w(b')}\bigr)}$ est un rétracte du foncteur somme, et est donc une $\W$-équivalence (\cf~\ref{stablocfond}). On en déduit que le foncteur $\cm{A'}{b'}\toto\cm{A}{w(b')}$ est $\W$\nbd-coasphérique, ce qui implique, en vertu du critère (\emph{a}) de la proposition~\ref{carcarex}, que le carré $\mathcal D$ est $\W$\nbd-exact, et achève la démonstration.
\end{proof}

\begin{paragr} \titparagr{Carrés exacts faibles} Un \emph{carré $\W$\nbd-exact faible} est un carré satisfaisant aux conditions équivalentes (\emph{a}) et (\emph{b}) de la proposition précédente. En vertu de cette proposition, si le localisateur fondamental $\W$ n'est pas le localisateur fondamental grossier $\Wgr$, cette notion coïncide avec celle de carré $\W$\nbd-exact. L'intérêt de cette notion serait donc extrêmement limité si ce n'était pas elle qui apparaît naturellement dans la théorie des dérivateurs, et non \emph{pas} celle de carré exact (\cf~théorème~\ref{carcarexder}). On laisse comme exercice au lecteur d'énoncer et démontrer les analogues des propositions~\ref{carexop}, \ref{compcarhex}, \ref{desccarhexh} et \ref{changloccarex}, pour les carrés $\W$\nbd-exacts faibles. En revanche, la caractérisation des foncteurs $\W$\nbd-propres de la proposition~\ref{carpropreex} n'est plus vraie si on remplace carré $\W$\nbd-exact par carré $\W$\nbd-exact faible. Elle doit être remplacée par la caractérisation suivante:

\begin{prop} \label{carpropreexf}
Soit $u:A\toto B$ une flèche de $\Cat$. Les conditions suivantes sont équivalentes:
\begin{GMitemize}
\item[\rm a)] $u$ est $\W$-propre;
\item[\rm b)] pour tout diagramme de carrés cartésiens de la forme 
\[
\UseTwocells
\xymatrixcolsep{2.5pc}
\xymatrixrowsep{2.4pc}
\xymatrix{
&A''\ar[d]_{u''}\ar[r]^{v'}
\drtwocell<\omit>{}
&A'\ar[d]^{u'}\ar[r]^{v}\ar@<-2ex>@{}[d]_(.55){=}
\drtwocell<\omit>{}
&A\ar[d]^{u}\ar@<-2ex>@{}[d]_(.55){=}
\\
&B''\ar[r]_{w'}
&B'\ar[r]_{w}
&B
&\hskip -30pt,
}
\]
le carré de gauche est $\W$-exact faible. 
\end{GMitemize}
\end{prop}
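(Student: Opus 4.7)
Le plan est de d�montrer s�par�ment les deux implications. Pour (\emph{a}) $\Rightarrow$ (\emph{b}), l'approche consiste � invoquer la stabilit� classique des foncteurs $\W$\nbd-propres par changement de base (\cf~\cite[section~3.2]{Ast}): le foncteur $u'$, obtenu par image r�ciproque de $u$ le long de $w$, est lui-m�me $\W$\nbd-propre, et la proposition~\ref{proprelisseex} appliqu�e au carr� cart�sien de gauche (dont la fl�che verticale de droite est $u'$) garantit qu'il est $\W$\nbd-exact, donc \emph{a fortiori} $\W$\nbd-exact faible. Cette implication est la plus directe.

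Pour l'implication r�ciproque (\emph{b}) $\Rightarrow$ (\emph{a}), la strat�gie est de sp�cialiser le diagramme de (\emph{b}) de fa�on � faire appara�tre exactement la condition d�finissant le caract�re $\W$\nbd-propre de $u$. Fix� un objet $b$ de $B$, il s'agira de v�rifier que le foncteur canonique $A_b\toto\cm{A}{b}$ est $\W$\nbd-coasph�rique. Je proposerais de prendre pour $w:B'\toto B$ le foncteur d'oubli $\cm{B}{b}\toto B$, de sorte que le produit fibr� $A'=A\times_B\cm{B}{b}$ s'identifie canoniquement � $\cm{A}{b}$, puis pour $w':B''\toto B'$ le foncteur $e\toto\cm{B}{b}$ d�fini par l'objet final $(b,\id{b})$ de $\cm{B}{b}$, ce qui identifie $A''=A'\times_{B'}e$ � la fibre $A_b$ et le foncteur $v':A''\toto A'$ au foncteur d'inclusion naturel $A_b\toto\cm{A}{b}$, $a\mapstoto(a,\id{b})$. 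Par hypoth�se, le carr� cart�sien de gauche ainsi obtenu est $\W$\nbd-exact faible.

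L'observation cl� est que, $(b,\id{b})$ �tant objet final de $\cm{B}{b}$, la cat�gorie $\cm{\cm{A}{b}}{(b,\id{b})}$ est canoniquement isomorphe � $\cm{A}{b}$, et sous cet isomorphisme le foncteur induit par $v'$ � l'unique objet de $e=B''$ (au sens du crit�re~(\emph{a}) de la proposition~\ref{carcarexf}) s'identifie au foncteur canonique d'inclusion $A_b\toto\cm{A}{b}$. L'application de ce crit�re au carr� de gauche fournit alors exactement que $A_b\toto\cm{A}{b}$ est une $\W$\nbd-�quivalence colocalement sur $\cm{A}{b}$, ce qui co�ncide avec la $\W$\nbd-coasph�ricit� (\cf~\ref{eqloccoloc}), et ach�ve la d�monstration.

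Le principal obstacle attendu r�side dans la v�rification soigneuse des identifications pr�c�dentes \textemdash\ en particulier celle de $\cm{\cm{A}{b}}{(b,\id{b})}$ avec $\cm{A}{b}$ et celle du foncteur induit par $v'$ avec l'inclusion canonique de la fibre \textemdash\ car c'est l� que le choix tr�s particulier du diagramme se traduit en un �nonc� imm�diatement utilisable. Une fois ces identifications acquises, l'argument est formel et ne requiert que l'�quivalence entre \og~$\W$\nbd-�quivalence colocalement sur la cible~\fg{} et \og~$\W$\nbd-coasph�ricit�~\fg{} rappel�e en~\ref{eqloccoloc}.
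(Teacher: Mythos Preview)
Ta proposition est correcte et suit exactement la m�me d�marche que la preuve du papier : pour (\emph{a}) $\Rightarrow$ (\emph{b}), stabilit� du caract�re propre par changement de base puis proposition~\ref{proprelisseex}; pour (\emph{b}) $\Rightarrow$ (\emph{a}), sp�cialisation au diagramme $e\toto\cm{B}{b}\toto B$ via l'objet final $(b,\id{b})$, identification $\cm{(\cm{A}{b})}{(b,\id{b})}\simeq\cm{A}{b}$, et application du crit�re~(\emph{a}) de la proposition~\ref{carcarexf}. Les identifications que tu signales comme \og obstacle attendu\fg{} sont bien celles que le papier effectue, et elles sont imm�diates une fois observ� que $(b,\id{b})$ est objet final de $\cm{B}{b}$.
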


\begin{proof} L'implication (\emph{a}) $\Rightarrow$ (\emph{b}) résulte des propositions~\ref{proprelisseex} et \ref{carcarexf}, et de la stabilité des foncteurs propres par changement de base~\cite[corollaire 3.2.4]{Ast}. Pour prouver l'implication (\emph{b}) $\Rightarrow$ (\emph{a}), soit $b$ un objet de $B$, et considérons le diagramme de carrés cartésiens 
\[
\UseTwocells
\xymatrixcolsep{2.5pc}
\xymatrixrowsep{2.4pc}
\xymatrix{
&A_b\ar[d]_{}\ar[r]^{}
\drtwocell<\omit>{}
&\cm{A}{b}\ar[d]^{}\ar[r]^{}\ar@<-2ex>@{}[d]_(.55){=}
\drtwocell<\omit>{}
&A\ar[d]^{u}\ar@<-2ex>@{}[d]_(.55){=}
\\
&e\ar[r]_{(b,\id{b})}
&\cm{B}{b}\ar[r]_{}
&B
&\hskip -30pt,
}
\]
où $\cm{B}{b}\toto B$ est le foncteur d'oubli, et la flèche $e\toto\cm{B}{b}$ est définie par l'objet final $(b,\id{b})$ de $\cm{B}{b}$. En vertu de la condition (\emph{b}), le carré de gauche est $\W$\nbd-exact faible, et il résulte alors du critère (\emph{a}) de la proposition~\ref{carcarexf} que la flèche $A_b\toto\cm{\bigl(\cm{A}{b}\bigr)}{(b,\id{b})}\simeq\cm{A}{b}$ est une $\W$\nbd-équivalence colocalement sur $\cm{A}{b}$, autrement dit qu'elle est $\W$\nbd-coasphérique, ce qui prouve que $u$ est $\W$\nbd-propre.
\end{proof}
\end{paragr}

\begin{ex} Pour le localisateur fondamental grossier $\W=\Wgr$ (\cf~\ref{defWgr}), les notions de carré $\W$-exact et carré $\W$\nbd-exact faible sont bien distinctes. En effet, considérons le carré
\[
\UseAllTwocells
\xymatrixcolsep{.0pc}
\xymatrixrowsep{.3pc}
\xymatrix{
A'=
&e\ar[dddd]_{}\ar[rrrrrr]^{}
&&&&&&e\ar[ddd]^{0}
&=A
\\&&&&
\ddrrcompositemap<\omit>{\alpha}
&&&
\\&&&&&&&
\\&&&&&&&
\\
B'=
&e\ar[rrrrr]_-{1}
&&&&&\{0\ar@/^.5pc/[rr]^{\alpha}\ar@/_.5pc/[rr]_{\beta}
&{\phantom{\}}}
&1\}
&\kern -5pt=B
&.
}
\]
Si $b'$ désigne l'unique objet de $B'=e$, le foncteur $\cm{A'}{b'}\toto\cm{A}{1}$, induit par la flèche horizontale du haut, s'identifie au morphisme
\[
\cm{A'}{b'}\simeq e\Toto{2}{\alpha}\{\alpha,\beta\}\simeq \cm{A}{1}
\]
de la catégorie ponctuelle $e$ vers la catégorie discrète ayant comme objets $\alpha$ et $\beta$, défini par l'objet $\alpha$. Ce morphisme est une $\W$\nbd-équivalence colocalement sur $A=e$, autrement dit, une $\W$\nbd-équivalence, puisque les catégories source et but sont toutes deux non vides. En revanche, il n'est \emph{pas} $\W$\nbd-coasphérique, puisque la catégorie $\mc{\beta}{(\cm{A'}{b'})}$ est vide. Ainsi, le carré $\mathcal D$ est $\W$\nbd-exact faible, mais \emph{pas} $\W$\nbd-exact.
\end{ex}

\section{\boldmath Structures de carrés exacts sur $\Cat$}

\begin{paragr} \titparagr{Classes de carrés} \label{classcar}
Dans cette section, on va considérer des classes $\Q$ de carrés dans $\Cat$, des propriétés de stabilité de telles classes, ainsi que des conditions de non trivialité, ou \og d'initialisation\fg{}, affirmant que les carrés d'un type donné appartiennent à $\Q$. \'Etant donnée une telle classe $\Q$, on dira qu'un carré de $\Cat$ est $\Q$\nbd-\emph{exact} s'il appartient à cette classe. On notera $\op{\Q}$ la classe formée des carrés obtenus par passage aux catégories opposées à partir de carrés appartenant à $\Q$. Toutes les classes $\Q$ considérées seront stables par isomorphisme de carrés.
\end{paragr}

\begin{paragr} \titparagr{Propriétés de stabilité de classes de carrés} \label{classstab}
Soit $\Q$ une classe de carrés de $\Cat$. Les conditions de stabilité les plus importantes qu'on aura à considérer sont les suivantes:
\smallskip

\noindent
\textbf{\boldmath CS\,$1_{\mathrm h}$ \ (\emph{Composition horizontale.})} La classe $\Q$ est stable par composition horizontale: 
Considérons deux carrés composables horizontalement et leur composé
\[
\xymatrixrowsep{1.1pc}
\xymatrixcolsep{.9pc}
\UseAllTwocells
\xymatrix{
A''\ddrrcompositemap<\omit>{\alpha'}
  \ar[rr]^{v'}
  \ar[dd]_{u''}
&&A'\ddrrcompositemap<\omit>{\alpha}
  \ar[rr]^{v}
  \ar[dd]^{u'}
&&A\ar[dd]^u
&&&A''\ddrrcompositemap<\omit>{\alpha''}
  \ar[rr]^{v''=vv'}
  \ar[dd]_{u''}
&&A\ar[dd]^u
\\
\\
B''\ar[rr]_{w'}
&&B'\ar[rr]_w
&&B
&,
&&B''\ar[rr]_{w''=ww'\,\,}
&&B
&,
\\
&\mathcal D'
&&\mathcal D
&&&&&\kern -10pt \mathcal D\circh\mathcal D'\kern -10pt
}
\]
où $\alpha'' =(w\star\alpha')(\alpha\star v')$. Si $\mathcal D$ et $\mathcal D'$ sont $\Q$\nbd-exacts, il en est de même de $\mathcal D\circh\mathcal D'$.
\smallbreak

\noindent
\textbf{\boldmath CS\,$1_{\mathrm v}$ \ (\emph{Composition verticale.})} La classe $\Q$ est stable par composition verticale: 
Considérons deux carrés composables verticalement et leur composé
$$
\xymatrixrowsep{.8pc}
\xymatrixcolsep{1.4pc}
\UseAllTwocells
\xymatrix{
&A'\ar[rr]^f\ar[dd]_{u'}\ddrrcompositemap<\omit>{\alpha}
&&A\ar[dd]^u
&&&&A'\ar[rr]^f\ar[dd]_{v'u'}\ddrrcompositemap<\omit>{\,\gamma}
&&A\ar[dd]^{vu}
\\
{\mathcal D}\quad
\\
&B'\ar[rr]^g\ar[dd]_{v'}\ddrrcompositemap<\omit>{\beta}
&&B\ar[dd]^v
&&&&C'\ar[rr]_h
&&C
&\hskip -15pt,
\\
{\mathcal E}\quad
&&&&&&&&\hskip -30pt{\mathcal E}\circv\mathcal D\hskip -30pt
\\
&C'\ar[rr]_h
&&C
&\hskip -15pt,
}
$$
où $\gamma =(\beta\star u')(v\star\alpha)$. Si $\mathcal D$ et $\mathcal E$ sont $\Q$\nbd-exacts, il en est de même de $\mathcal E\circv\mathcal D$.
\smallbreak

\noindent
\textbf{\boldmath CS\,$2_{\mathrm h}$ \ (\emph{Descente horizontale.})} Soient $J$ un ensemble, et 
\[
\xymatrixrowsep{.7pc}
\xymatrixcolsep{1.3pc}
\UseAllTwocells
\xymatrix{
&A'\ddrrcompositemap<\omit>{\alpha}
  \ar[rr]^{v}
  \ar[dd]_{u'}
&&A\ar[dd]^u
&&&A_j\ddrrcompositemap<\omit>{\ \alpha_j}
  \ar[rr]^{v_j}
  \ar[dd]_{u_j}
&&A'\ar[dd]^{u'}
\\
{\mathcal D}\ =
&&&&\hbox{et}
&{\mathcal D_j}\ =
&&&&,\ j\in J\ ,
\\
&B'\ar[rr]_w
&&B
&&&B_j\ar[rr]_{w_j}
&&B'
}
\]
des carrés dans $\Cat$ tels que $\Ob\,B'=\bigcup\limits_{j\in J}w_j(\Ob\,B_j)$. Si pour tout élément $j$ de $J$, les carrés $\mathcal D_j$ et $\mathcal D\circh\mathcal D_j$ sont $\Q$\nbd-exacts, il en est de même de $\mathcal D$.
\smallbreak

\noindent
\textbf{\boldmath CS\,$2_{\mathrm v}$ \ (\emph{Descente verticale.})} Soient $J$ un ensemble, et 
\[
\xymatrixrowsep{.7pc}
\xymatrixcolsep{1.3pc}
\UseAllTwocells
\xymatrix{
&A'\ddrrcompositemap<\omit>{\alpha}
  \ar[rr]^{v}
  \ar[dd]_{u'}
&&A\ar[dd]^u
&&&A'_j\ddrrcompositemap<\omit>{\ \alpha_j}
  \ar[rr]^{v_j}
  \ar[dd]_{u'_j}
&&A_j\ar[dd]^{u_j}
\\
{\mathcal D}\ =
&&&&\hbox{et}
&{\mathcal D_j}\ =
&&&&,\ j\in J\ ,
\\
&B'\ar[rr]_w
&&B
&&&A'\ar[rr]_{v}
&&A
}
\]
des carrés dans $\Cat$ tels que $\Ob\,A=\bigcup\limits_{j\in J}u_j(\Ob\,A_j)$. Si pour tout élément $j$ de $J$, les carrés $\mathcal D_j$ et $\mathcal D\circv\mathcal D_j$ sont $\Q$\nbd-exacts, il en est de même de $\mathcal D$.
\smallbreak

\noindent
\textbf{\boldmath CS\,$3_{\mathrm h}$ \ (\emph{Descente locale.})} Soit 
\[
\mathcal{D}\ =\quad
\raise 23pt
\vbox{
\UseTwocells
\xymatrixcolsep{2.5pc}
\xymatrixrowsep{2.4pc}
\xymatrix{
A'\ar[d]_{u'}\ar[r]^{v}
\drtwocell<\omit>{\alpha}
&A\ar[d]^{u}
\\
B'\ar[r]_{w}
&B
&\hskip -30pt
}
}
\] 
un carré dans $\Cat$. Si pour tout objet $b'$ de $B'\!$, le composé horizontal $\mathcal D\circh \mathcal{D}^{\mathrm{\,g}}_{u',b'}$, où $\mathcal{D}^{\mathrm{\,g}}_{u',b'}$ désigne le carré comma
\[
\hskip 15pt
\mathcal{D}_{u',b'}^{\mathrm{\,g}}\ =\quad
\raise 23pt
\vbox{
\UseTwocells
\xymatrixcolsep{2.5pc}
\xymatrixrowsep{2.4pc}
\xymatrix{
\cm{A'}{b'}\ar[d]_{}\ar[r]^{}
\drtwocell<\omit>{}
&A'\ar[d]^{u'}
\\
e\ar[r]_{b'}
&B'
&\hskip -30pt,
}
}\qquad
\]
est $\Q$-exact, il en est de même de $\mathcal D$.
\smallbreak

\noindent
\textbf{\boldmath CS\,$3_{\mathrm v}$ \ (\emph{Descente colocale.})} Soit 
\[
\mathcal{D}\ =\quad
\raise 23pt
\vbox{
\UseTwocells
\xymatrixcolsep{2.5pc}
\xymatrixrowsep{2.4pc}
\xymatrix{
A'\ar[d]_{u'}\ar[r]^{v}
\drtwocell<\omit>{\alpha}
&A\ar[d]^{u}
\\
B'\ar[r]_{w}
&B
&\hskip -30pt
}
}
\] 
un carré dans $\Cat$. Si pour tout objet $a$ de $A$, le composé vertical $\mathcal D\circv \mathcal{D}^{\mathrm{\,d}}_{v,a}$, où $\mathcal{D}^{\mathrm{\,d}}_{v,a}$ désigne le carré comma
\[
\hskip 15pt
\mathcal{D}_{v,a}^{\mathrm{\,d}}\ =\quad
\raise 23pt
\vbox{
\UseTwocells
\xymatrixcolsep{2.5pc}
\xymatrixrowsep{2.4pc}
\xymatrix{
\mc{a}{A'}\ar[d]_{}\ar[r]^{}
\drtwocell<\omit>{}
&e\ar[d]^{a}
\\
A'\ar[r]_{v}
&A
&\hskip -30pt,
}
}\qquad
\]
est $\Q$-exact, il en est de même de $\mathcal D$.
\smallbreak

\noindent
\textbf{CS\,4 \ (\emph{Passage à l'opposé.})} Si le carré $\mathcal D$ de $\Cat$ est $\Q$\nbd-exact, il en est de même du carré $\op{\mathcal D}$, obtenu par passage aux catégories opposées, ce qui implique que $\op{\Q}=\Q$.
\end{paragr}

\begin{paragr} \titparagr{Propriétés \og d'initialisation\fg} \label{classinit}
Les conditions du paragraphe précédent étant des conditions de stabilité, elles sont toutes satisfaites par la classe vide. Pour obtenir des classes $\Q$ non triviales, il faut imposer qu'elles contiennent certains types de carrés. Parmi ces conditions \og d'initialisation\fg{}, les plus importantes envisagées sont énumérées ci-dessous. Certaines de ses conditions sont \og absolues\fg, et certaines dépendent de la donnée d'un localisateur fondamental faible $\W$, donné une fois pour toutes. On rappelle que $e$ désigne la catégorie ponctuelle.
\smallbreak

\noindent
\textbf{CI\,1g} \ Si $u:A\toto B$ est un foncteur $\W$\nbd-asphérique, alors le carré
\[
\UseTwocells
\xymatrixcolsep{2.5pc}
\xymatrixrowsep{2.4pc}
\xymatrix{
A\ar[d]_{u}\ar[r]^{}
\drtwocell<\omit>{}
&e\ar[d]^{}\ar@<-2ex>@{}[d]_(.55){=}\ar@{}[d]
\\
B\ar[r]_{}
&e
}
\]
est $\Q$\nbd-exact.
\smallbreak

\noindent
\textbf{CI\,1d} \ Si $u:A\toto B$ est un foncteur $\W$\nbd-coasphérique, alors le carré
\[
\UseTwocells
\xymatrixcolsep{2.5pc}
\xymatrixrowsep{2.4pc}
\xymatrix{
A\ar[d]_{}\ar[r]^{u}
\drtwocell<\omit>{}
&B\ar[d]\ar@<-2ex>@{}[d]_(.55){=}
\\
e\ar[r]_{}
&e
}
\]
est $\Q$\nbd-exact.
\smallbreak

\noindent
\textbf{\boldmath CI\,$1_0$} \ Si $A$ est une catégorie $\W$\nbd-asphérique, alors le carré
\[
\UseTwocells
\xymatrixcolsep{2.5pc}
\xymatrixrowsep{2.4pc}
\xymatrix{
A\ar[d]_{}\ar[r]^{}
\drtwocell<\omit>{}
&e\ar[d]^{}\ar@<-2ex>@{}[d]_(.55){=}
\\
e\ar[r]_{}
&e
}
\]
est $\Q$\nbd-exact.
\smallbreak

\noindent
\textbf{\boldmath CI\,$1'$g} \ Si $u:A\toto B$ est un foncteur admettant un adjoint à droite, alors le carré
\[
\UseTwocells
\xymatrixcolsep{2.5pc}
\xymatrixrowsep{2.4pc}
\xymatrix{
A\ar[d]_{u}\ar[r]^{}
\drtwocell<\omit>{}
&e\ar[d]^{}\ar@<-2ex>@{}[d]_(.55){=}\ar@{}[d]
\\
B\ar[r]_{}
&e
}
\]
est $\Q$\nbd-exact.
\smallbreak

\noindent
\textbf{\boldmath CI\,$1'$d} \ Si $u:A\toto B$ est un foncteur admettant un adjoint à gauche, alors le carré
\[
\UseTwocells
\xymatrixcolsep{2.5pc}
\xymatrixrowsep{2.4pc}
\xymatrix{
A\ar[d]_{}\ar[r]^{u}
\drtwocell<\omit>{}
&B\ar[d]\ar@<-2ex>@{}[d]_(.55){=}
\\
e\ar[r]_{}
&e
}
\]
est $\Q$\nbd-exact.
\smallbreak

On observe qu'on a le diagramme d'implications tautologiques suivant (\cf~\ref{asphcoasph})
\[
\xymatrixcolsep{.5pc}
\xymatrixrowsep{1.4pc}
\xymatrix{
&\hbox{\textbf{CI\,1g}}\ar@{=>}[ld]\ar@{=>}[rd]
&&\hbox{\textbf{CI\,1d}}\ar@{=>}[ld]\ar@{=>}[rd]
\\
\hbox{\textbf{\boldmath CI\,$1'$g}}
&&\textbf{\boldmath CI\,$1_0$}
&&\hbox{\textbf{\boldmath CI\,$1'$d}}
&.
}
\]
\smallbreak

\noindent
\textbf{CI\,2g} \ Pour toute flèche $u:A\toto B$ de $\Cat$, et tout objet $b$ de $B$, le carré comma
\[
\mathcal{D}_{u,b}^{\mathrm{\,g}}\ =\quad
\raise 23pt
\vbox{
\UseTwocells
\xymatrixcolsep{2.5pc}
\xymatrixrowsep{2.4pc}
\xymatrix{
\cm{A}{b}\ar[d]_{p}\ar[r]^{j}
\drtwocell<\omit>{\alpha}
&A\ar[d]^{u}
\\
e\ar[r]_{b}
&B
}
}\qquad
\]
(où $b:e\toto B$ désigne la flèche définie par l'objet $b$ de $B$) est $\Q$\nbd-exact.
\smallbreak

\noindent
\textbf{CI\,2d} \ Pour toute flèche $u:A\toto B$ de $\Cat$, et tout objet $b$ de $B$, le carré comma
\[
\mathcal{D}_{u,b}^{\mathrm{\,d}}\ =\quad
\raise 23pt
\vbox{
\UseTwocells
\xymatrixcolsep{2.5pc}
\xymatrixrowsep{2.4pc}
\xymatrix{
\mc{b}{A}\ar[d]_{k}\ar[r]^{q}
\drtwocell<\omit>{\beta}
&e\ar[d]^{b}
\\
A\ar[r]_{u}
&B
}
}\qquad
\]
(où $b:e\toto B$ désigne la flèche définie par l'objet $b$ de $B$) est $\Q$\nbd-exact.
\smallbreak

On observe qu'en présence de \textbf{CI\,2g} (resp. de \textbf{CI\,2d}) la condition de descente horizontale (resp. verticale) implique la condition de descente locale (resp. colocale):
\begin{equation} \label{implev}
\hbox{(\textbf{CI\,2g} et \textbf{\boldmath CS\,$2_{\mathrm h}$})}\Longrightarrow\hbox{\textbf{\boldmath CS\,$3_{\mathrm h}$}}\smsp,\qquad
\hbox{(\textbf{CI\,2d} et \textbf{\boldmath CS\,$2_{\mathrm v}$})}\Longrightarrow\hbox{\textbf{\boldmath CS\,$3_{\mathrm v}$}}\smsp.
\end{equation}
\end{paragr}

\begin{paragr} \titparagr{Exemples de classes de carrés} \label{classex}
On s'intéresse plus particulièrement aux classes de carrés suivantes.
\smallskip

\begin{subparagr} \titsubparagr{La classe des carrés exacts homotopiques} \label{classcarex}
On a vu dans la section précédente que pour tout localisateur fondamental faible $\W$, la classe $\Q$ des carrés $\W$\nbd-exacts satisfait à \emph{toutes} les conditions de stabilité et d'initialisation considérées dans les paragraphes~\ref{classstab} et~\ref{classinit} 
(\cf~\ref{extrcarex} -- \ref{carexcomma}).
\end{subparagr}

\begin{subparagr} \titsubparagr{La classe des carrés exacts homotopiques faibles} \label{classcarexf}
Pour tout localisateur fondamental $\W$, la classe $\Q$ des carrés $\W$\nbd-exacts faibles satisfait aussi à toutes les conditions considérées dans les paragraphes~\ref{classstab} et~\ref{classinit}.
\end{subparagr}

\begin{subparagr} \titsubparagr{La classe des carrés de Beck-Chevalley à gauche} \label{classcarBCg} On vérifie facilement que la classe $\Q$ des carrés de Beck-Chevalley à gauche est stable par composition horizontale \emph{et} verticale (conditions~\textbf{\boldmath CS\,$1_{\mathrm h}$} et~\textbf{\boldmath CS\,$1_{\mathrm v}$}), et satisfait tautologiquement à la condition \textbf{\boldmath CI\,$1'$g}. 
\end{subparagr}

\begin{subparagr} \titsubparagr{La classe des carrés de Beck-Chevalley à droite} \label{classcarBCd} Dualement, la classe $\Q$ des carrés de Beck-Chevalley à droite est stable par composition horizontale \emph{et} verticale (conditions~\textbf{\boldmath CS\,$1_{\mathrm h}$} et~\textbf{\boldmath CS\,$1_{\mathrm v}$}), et satisfait à la condition~\textbf{\boldmath CI\,$1'$d}. 
\end{subparagr}

\begin{subparagr} \titsubparagr{La classe des carrés de Beck-Chevalley} \label{classcarBCgd} La classe $\Q$ des carrés de Beck-Chevalley à gauche \emph{et} à droite est stable par composition horizontale et verticale, et par passage aux catégories opposées (conditions~\textbf{\boldmath CS\,$1_{\mathrm h}$}, \textbf{\boldmath CS\,$1_{\mathrm v}$} et~\textbf{CS\,4}). 
\end{subparagr}

\begin{subparagr} \titsubparagr{La classe des carrés comma} \label{classcommacar} La classe $\Q$ des carrés comma ne satisfait que la condition de stabilité par passage aux catégories opposées (condition~\textbf{CS\,4}), et tautologiquement les conditions \textbf{CI\,2g} et \textbf{CI\,2d}.
\end{subparagr}
\end{paragr}

\begin{lemme} \label{condsuffex1}
Soient $\W$ un localisateur fondamental faible, et $\Q$ une classe de carrés de $\Cat$ satisfaisant aux conditions \emph{\textbf{\boldmath CS\,$1_{\mathrm h}$}, \textbf{\boldmath CS\,$3_{\mathrm h}$}, \textbf{CI\,1d}} et \emph{\textbf{CI\,2g}}. Alors $\Q$ contient la classe des carrés $\W$\nbd-exacts.
\end{lemme}

\begin{proof}
Soient donc  
\[
\mathcal{D}\ =\quad
\raise 23pt
\vbox{
\UseTwocells
\xymatrixcolsep{2.5pc}
\xymatrixrowsep{2.4pc}
\xymatrix{
A'\ar[d]_{u'}\ar[r]^{v}
\drtwocell<\omit>{\alpha}
&A\ar[d]^{u}
\\
B'\ar[r]_{w}
&B
&\hskip -30pt
}
}
\]
un carré $\W$-exact, $b'$ un objet de $B'$, et considérons les diagrammes
\[
\xymatrixrowsep{1.pc}
\xymatrixcolsep{.4pc}
\UseAllTwocells
\xymatrix{
\cm{A'}{b'}\ddrrcompositemap<\omit>{}
  \ar[rr]^{}
  \ar[dd]_{}
&&A'\ddrrcompositemap<\omit>{\alpha}
  \ar[rr]^{v}
  \ar[dd]_{u'}
&&A\ar[dd]^{u\hskip 20pt\hbox{et}}
\\
\\
e\ar[rr]_{b'}
&&B'\ar[rr]_w
&&B
\\
&\hskip -5pt\mathcal D^{\mathrm{\,g}}_{u',b'}\hskip -5pt
&&\hskip 6pt\mathcal D\hskip 6pt
}
\hskip 10pt
\xymatrix{
\cm{A'}{b'}\ddrrcompositemap<\omit>{}
  \ar[rr]^{}
  \ar[dd]_{}
&&\cm{A}{w(b')}\ddrrcompositemap<\omit>{}
  \ar[rr]^{}
  \ar[dd]_{}\ar@<-3.3ex>@{}[dd]_(.55){=}
&&A\ar[dd]^{u}
\\
\\
e\ar[rr]_{}
&&e\ar[rr]_{w(b')}
&&B
&\hskip 5pt.
\\
&\hskip -2pt\mathcal D'\hskip -4pt
&&\hskip -13pt\mathcal D^{\mathrm{\,g}}_{u,w(b')}\hskip -11pt
}
\]
On observe qu'on a l'égalité
\[
\mathcal D\,\circh\,\mathcal D^{\mathrm{\,g}}_{u',b'}\,=\,\mathcal D^{\mathrm{\,g}}_{u,w(b')}\,\circh\,\mathcal D'\smsp.
\]
Comme le carré $\mathcal D$ est $\W$\nbd-exact, en vertu du critère (\emph{a}) de la proposition~\ref{carcarex}, la flèche $\cm{A'}{b'}\toto\cm{A}{w(b')}$ est $\W$\nbd-coasphérique. Il résulte donc de la condition~\textbf{CI\,1d} que le carré $\mathcal D'$ est $\Q$\nbd-exact. D'autre part, la condition~\textbf{CI\,2g} implique que les carrés comma $\mathcal D^{\mathrm{\,g}}_{u,w(b')}$ et $\mathcal D^{\mathrm{\,g}}_{u',b'}$ sont $\Q$\nbd-exacts. En vertu de la condition~\textbf{\boldmath CS\,$1_{\mathrm h}$}, le carré composé $\mathcal D^{\mathrm{\,g}}_{u,w(b')}\,\circh\,\mathcal D'$ est donc aussi $\Q$\nbd-exact. L'égalité ci-dessus, et la condition~\textbf{\boldmath CS\,$3_{\mathrm h}$} impliquent alors que le carré $\W$\nbd-exact $\mathcal D$ appartient à la classe $\Q$.
\end{proof}

\begin{thm} \label{carclassex1}
Soit $\W$ un localisateur fondamental faible. La classe des carrés $\W$-exacts est la plus petite classe de carrés, stable par composition horizontale, satisfaisant à la condition de descente locale et contenant les carrés comma, ainsi que les carrés de la forme
\[
\UseTwocells
\xymatrixcolsep{2.5pc}
\xymatrixrowsep{2.4pc}
\xymatrix{
&A\ar[d]_{}\ar[r]^{u}
\drtwocell<\omit>{}
&B\ar[d]\ar@<-2ex>@{}[d]_(.55){=}
\\
&e\ar[r]_{}
&e
&\hskip -30pt,
}
\]
avec $u$ morphisme $\W$-coasphérique de $\Cat$.
\end{thm}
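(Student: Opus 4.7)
The plan is to prove the two inclusions separately. For the direction that the class of $\W$\nbd-exact squares satisfies the four listed properties, everything is already established in the paper. Stability under horizontal composition is Proposition~\ref{compcarhex}; the fact that every comma square is $\W$\nbd-exact is Proposition~\ref{carexcomma}; and the $\W$\nbd-exactness of a square of the displayed form with $u$ a $\W$\nbd-coasph\'erique functor is the observation of~\ref{extrcarex}. The local descent property~\textbf{\boldmath CS\,$3_{\mathrm h}$} then follows from the tautological implication recorded at the end of~\ref{classinit}: since comma squares are $\W$\nbd-exact, property~\textbf{CI\,2g} holds, and~\textbf{\boldmath CS\,$2_{\mathrm h}$} is Proposition~\ref{desccarhexh}.

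For the minimality, let $\Q$ be any class of squares satisfying the four hypotheses of the theorem. I would verify that $\Q$ satisfies the four hypotheses of Lemma~\ref{condsuffex1}: \textbf{\boldmath CS\,$1_{\mathrm h}$} is the stability under horizontal composition; \textbf{\boldmath CS\,$3_{\mathrm h}$} is the local descent; \textbf{CI\,2g} is a particular case of the assumption that $\Q$ contains every comma square; and \textbf{CI\,1d} is precisely the third condition of the statement, on squares over $e\to e$ with $u$ a $\W$\nbd-coasph\'erique functor. The lemma then immediately yields that $\Q$ contains every $\W$\nbd-exact square, establishing the minimality.

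The substantive content of the theorem is therefore carried entirely by Lemma~\ref{condsuffex1}, which has already been proved. The key step of that lemma---and what would be the main obstacle were one to reprove it---is the factorization $\mathcal D\circh\mathcal D^{\mathrm{\,g}}_{u',b'} = \mathcal D^{\mathrm{\,g}}_{u,w(b')}\circh \mathcal D'$, for a $\W$\nbd-exact $\mathcal D$ and an object $b'$ of $B'$. Via~\textbf{\boldmath CS\,$3_{\mathrm h}$}, this reduces the membership of $\mathcal D$ in $\Q$ to that of the two comma squares (by hypothesis on $\Q$) together with the auxiliary square $\mathcal D'$, the latter belonging to $\Q$ by~\textbf{CI\,1d} applied to the $\W$\nbd-coasph\'erique functor $\cm{A'}{b'}\to\cm{A}{w(b')}$ furnished by criterion~(\emph{a}) of Proposition~\ref{carcarex}.
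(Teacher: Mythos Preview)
Your proof is correct and follows the same route as the paper's: one inclusion comes from the already-established Propositions~\ref{compcarhex}, \ref{desccarhexh}, \ref{carexcomma} and Example~\ref{extrcarex} (with the implication \textbf{CI\,2g} $+$ \textbf{\boldmath CS\,$2_{\mathrm h}$} $\Rightarrow$ \textbf{\boldmath CS\,$3_{\mathrm h}$} made explicit), and the minimality is exactly Lemma~\ref{condsuffex1}. Your summary of the key factorization in that lemma is also accurate.
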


\begin{proof}
Le théorème est conséquence immédiate du lemme précédent, des propositions~\ref{compcarhex}, \ref{desccarhexh}, \ref{carexcomma}, et de l'exemple~\ref{extrcarex}.
\end{proof}

\begin{lemme} \label{consuffexcoasph}
Soient $\W$ un localisateur fondamental faible, et $\Q$ une classe de carrés de $\Cat$ satisfaisant aux conditions \emph{\textbf{\boldmath CS\,$3_{\mathrm v}$}} et \emph{\textbf{\boldmath CI\,$1_0$}}. Alors $\Q$ satisfait aussi à la condition \emph{\textbf{CI\,1d}}.
\end{lemme}

\begin{proof}
Soient $u:A\toto B$ un morphisme $\W$\nbd-coasphérique de $\Cat$, $b$ un objet de $B$, et considérons le diagramme
\[
\UseTwocells
\xymatrixcolsep{2.5pc}
\xymatrixrowsep{2.4pc}
\xymatrix{
&\mc{b}{A}\ar[d]\ar[r]
\drtwocell<\omit>{}
&e\ar[d]
\\
&A\ar[d]_{}\ar[r]^{u}
\drtwocell<\omit>{}
&B\ar[d]\ar@<-2ex>@{}[d]_(.55){=}
\\
&e\ar[r]_{}
&e
&\hskip -30pt.
}
\]
Comme la catégorie $\mc{b}{A}$ est $\W$\nbd-asphérique, la condition~\textbf{\boldmath CI\,$1_0$} implique que le carré composé est $\Q$\nbd-exact, et par suite, en vertu de la condition de descente colocale~\textbf{\boldmath CS\,$3_{\mathrm v}$}, il en est de même du carré du bas, ce qui prouve la condition~\textbf{CI\,1d}.
\end{proof}

\begin{lemme} \label{condsuffex2}
Soient $\W$ un localisateur fondamental faible, et $\Q$ une classe de carrés de $\Cat$ satisfaisant aux conditions \emph{\textbf{\boldmath CS\,$1_{\mathrm h}$}, \textbf{\boldmath CS\,$3_{\mathrm h}$}, \textbf{\boldmath CS\,$3_{\mathrm v}$}, \textbf{\boldmath CI\,$1_0$}} et \emph{\textbf{CI\,2g}}. Alors $\Q$ contient la classe des carrés $\W$\nbd-exacts.
\end{lemme}

\begin{proof}
Le lemme s'obtient en combinant les lemmes~\ref{condsuffex1} et~\ref{consuffexcoasph}.
\end{proof}

\begin{thm} \label{carclassex2}
Soit $\W$ un localisateur fondamental faible. La classe des carrés $\W$-exacts est la plus petite classe de carrés stable par passage aux catégories opposées, par composition horizontale, satisfaisant à la condition de descente locale et contenant les carrés comma, ainsi que les carrés de la forme
\[
\UseTwocells
\xymatrixcolsep{2.5pc}
\xymatrixrowsep{2.4pc}
\xymatrix{
&A\ar[d]_{}\ar[r]^{}
\drtwocell<\omit>{}
&e\ar[d]\ar@<-2ex>@{}[d]_(.55){=}
\\
&e\ar[r]_{}
&e
&\hskip -30pt,
}
\]
avec $A$ une petite catégorie $\W$-asphérique.
\end{thm}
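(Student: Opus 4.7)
Le plan est de d�duire ce th�or�me du lemme~\ref{condsuffex2}, en v�rifiant qu'une classe $\Q$ satisfaisant aux cinq propri�t�s �nonc�es satisfait automatiquement aux hypoth�ses de ce lemme. Que la classe des carr�s $\W$\nbd-exacts satisfait � toutes les propri�t�s �nonc�es r�sulte respectivement des propositions~\ref{carexop} (passage � l'oppos�), \ref{compcarhex} (composition horizontale), \ref{desccarhexh} (descente locale), \ref{carexcomma} (carr�s comma), et de l'exemple~\ref{extrcarex}.

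Pour la minimalit�, soit $\Q$ une classe de carr�s satisfaisant aux propri�t�s du th�or�me. Parmi les cinq hypoth�ses du lemme~\ref{condsuffex2}, quatre (\textbf{\boldmath CS\,$1_{\mathrm h}$}, \textbf{\boldmath CS\,$3_{\mathrm h}$}, \textbf{\boldmath CI\,$1_0$} et \textbf{CI\,2g}) figurent directement dans les hypoth�ses faites sur $\Q$. L'�tape cl� consiste donc � d�duire la cinqui�me, la descente colocale~\textbf{\boldmath CS\,$3_{\mathrm v}$}, de la descente locale~\textbf{\boldmath CS\,$3_{\mathrm h}$} et de la stabilit� par passage aux cat�gories oppos�es~\textbf{CS\,4}.

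Pour ce faire, je consid�rerais un carr� $\mathcal D$ tel que pour tout objet $a$ de $A$, le compos� $\mathcal D \circv \mathcal D^{\mathrm{\,d}}_{v,a}$ appartient � $\Q$. Le passage � l'oppos� �change composition verticale et composition horizontale, et, gr�ce � l'isomorphisme canonique $\op{(\mc{a}{A'})} \simeq \cm{\op{A'}}{a}$ rappel� en~\ref{asphcoasph}, envoie le carr� comma $\mathcal D^{\mathrm{\,d}}_{v,a}$ sur le carr� comma $\mathcal D^{\mathrm{\,g}}_{\op{v},a}$ associ� au foncteur $\op{v}:\op{A'}\toto\op{A}$ et � l'objet $a$ de $\op{A}$. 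L'hypoth�se~\textbf{CS\,4} donne ainsi, pour tout objet $a$ de $\op{A}$, l'appartenance � $\Q$ du carr� $\op{\mathcal D} \circh \mathcal D^{\mathrm{\,g}}_{\op{v},a}$. L'hypoth�se~\textbf{\boldmath CS\,$3_{\mathrm h}$} appliqu�e � $\op{\mathcal D}$ fournit ensuite $\op{\mathcal D} \in \Q$, et un nouvel usage de~\textbf{CS\,4} livre $\mathcal D \in \Q$, �tablissant~\textbf{\boldmath CS\,$3_{\mathrm v}$} pour $\Q$.

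La principale difficult� attendue est de nature combinatoire et concerne le suivi des 2\nbd-cellules dans l'identification $\op{(\mathcal D \circv \mathcal D^{\mathrm{\,d}}_{v,a})} \simeq \op{\mathcal D} \circh \mathcal D^{\mathrm{\,g}}_{\op{v},a}$: il faudra v�rifier soigneusement que la transformation naturelle compos�e $\gamma = (\beta \star u')(v \star \alpha)$ de la d�finition de la composition verticale (\cf~\ref{classstab}) donne, par passage � l'oppos�, la 2\nbd-cellule attendue dans le compos� horizontal, v�rification routini�re mais pointilleuse.
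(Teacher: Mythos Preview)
Your proof is correct and follows essentially the same route as the paper: invoke the propositions~\ref{carexop}, \ref{compcarhex}, \ref{desccarhexh}, \ref{carexcomma} and example~\ref{extrcarex} to see that the $\W$-exact squares satisfy all the listed properties, and then use lemma~\ref{condsuffex2} for minimality. The paper's proof is a single sentence and leaves implicit the one non-tautological point you spell out, namely that \textbf{\boldmath CS\,$3_{\mathrm v}$} (needed in lemma~\ref{condsuffex2}) follows from \textbf{CS\,4} together with \textbf{\boldmath CS\,$3_{\mathrm h}$} via the identification $\op{(\mathcal D \circv \mathcal D^{\mathrm{\,d}}_{v,a})} \simeq \op{\mathcal D} \circh \mathcal D^{\mathrm{\,g}}_{\op{v},a}$; your argument for this is exactly right and the combinatorial check on the $2$-cells is indeed routine.
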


\begin{proof} 
Le théorème est conséquence immédiate du lemme précédent, des propositions~\ref{carexop}, \ref{compcarhex}, \ref{desccarhexh}, \ref{carexcomma}, et de l'exemple~\ref{extrcarex}.
\end{proof}

\begin{cor} \label{carclassexGuit}
La classe des carrés exacts de Guitart est la plus petite classe de carrés stable par passage aux catégories opposées, par composition horizontale, satisfaisant à la condition de descente locale et contenant les carrés comma, ainsi que les carrés de la forme
\[
\UseTwocells
\xymatrixcolsep{2.5pc}
\xymatrixrowsep{2.4pc}
\xymatrix{
&A\ar[d]_{}\ar[r]^{}
\drtwocell<\omit>{}
&e\ar[d]\ar@<-2ex>@{}[d]_(.55){=}
\\
&e\ar[r]_{}
&e
&\hskip -30pt,
}
\]
avec $A$ une petite catégorie $0$-connexe.
\end{cor}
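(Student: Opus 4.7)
The plan is simply to apply Theorem~\ref{carclassex2} to the specific fundamental localizer $\W=\Wzer$. According to the discussion in~\ref{defcarex}, the Guitart exact squares are precisely the $\Wzer$\nbd-exact squares, so the characterization sought is exactly the statement of Theorem~\ref{carclassex2} for this choice of $\W$.

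The only point that needs to be identified is the translation of \og$\Wzer$\nbd-asph�rique\fg{} into \og$0$\nbd-connexe\fg. By the very definition of $\Wn{0}$ recalled in~\ref{defWn}, a morphism of $\Cat$ is a $0$\nbd-�quivalence if and only if the realization of its nerve induces a bijection on~$\pi_0$. In particular, a small category $A$ is $\Wzer$\nbd-asph�rique, i.e.\ $A\toto e$ is a $0$\nbd-�quivalence, if and only if $\pi_0$ of the realization of its nerve is a singleton, which is to say that $A$ is $0$\nbd-connexe (non vide, et tout couple d'objets relié par un zigzag de fl�ches).

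With this dictionary in hand, the statement of Theorem~\ref{carclassex2} specialized to $\W=\Wzer$ reads word for word as the corollary. Since $\Wzer$ is indeed a localisateur fondamental (and a fortiori faible), the hypotheses of the theorem are satisfied, and no further argument is needed. There is no real obstacle here: the corollary is a direct specialization, and the work has already been done in proving Theorem~\ref{carclassex2}.
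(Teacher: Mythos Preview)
Your proposal is correct and follows exactly the same approach as the paper: the paper's proof is the single sentence ``C'est le cas particulier du th\'eor\`eme, appliqu\'e au localisateur fondamental $\Wzer$'', and you have simply made explicit the dictionary $\Wzer$\nbd-asph\'erique $\Leftrightarrow$ $0$\nbd-connexe that this specialization relies on.
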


\begin{proof}
C'est le cas particulier du théorème, appliqué au localisateur fondamental $\Wzer$.
\end{proof}

\noindent
Les lemmes suivants seront utiles à la théorie des dérivateurs dans la section suivante.

\begin{lemme} \label{condsuffexcomma}
Toute classe $\Q$ de carrés satisfaisant aux conditions \emph{\textbf{\boldmath CS\,$1_{\mathrm h}$}, \textbf{\boldmath CS\,$2_{\mathrm h}$}, \textbf{\boldmath CI\,$1'$d}} et \emph{\textbf{CI\,2g}} contient la classe des carrés comma.
\end{lemme}

\begin{proof}
Soit
\[
\mathcal{D}\ =\quad
\raise 23pt
\vbox{
\UseTwocells
\xymatrixcolsep{2.5pc}
\xymatrixrowsep{2.4pc}
\xymatrix{
A'\ar[d]_{u'}\ar[r]^{v}
\drtwocell<\omit>{\alpha}
&A\ar[d]^{u}
\\
B'\ar[r]_{w}
&B
&\hskip -30pt,
}
}
\]
un carré comma. On remarque que la catégorie $A'$ s'identifie à la catégorie cofibrée sur $B'$, définie par le foncteur
\[
B'\toto\Cat\smsp,\qquad b'\mapstoto\cm{A}{w(b')}\smsp,
\]
(et aussi, dualement, à la catégorie fibrée sur $A$, définie par le foncteur $\op{A}\toto\Cat$, $a\mapstoto\mc{u(a)}{B'}$), et en particulier pour tout objet $b'$ de $B'$, le foncteur canonique $\cm{A}{w(b')}\toto\cm{A'}{b'}$, de la fibre vers la catégorie comma, admet un adjoint à gauche (qui n'est autre que le foncteur $\cm{A'}{b'}\toto\cm{A}{w(b')}$, induit par $v$). Considérons le diagramme:
\[
\xymatrixrowsep{1.1pc}
\xymatrixcolsep{.9pc}
\UseAllTwocells
\xymatrix{
\cm{A}{w(b')}\ddrrcompositemap<\omit>{}
  \ar[rr]^{}
  \ar[dd]_{}
&&\cm{A'}{b'}\ddrrcompositemap<\omit>{}
  \ar[rr]^{}
  \ar[dd]^(.55){\hskip -30pt =}
&&A'\ar[dd]^{u'}
  \ddrrcompositemap<\omit>{\alpha}
  \ar[rr]^{v}
&&A\ar[dd]^u
\\
\\
e\ar[rr]_{}
&&e\ar[rr]_{b'}
&&B'\ar[rr]_{w}
&&B
&.
\\
&\kern -9pt\mathcal D'\kern -9pt
&&\kern -8pt \mathcal D^{\mathrm{\,g}}_{u',b'}\kern -8pt
&&\kern 3pt\mathcal D \kern 3pt
}
\]
On vérifie facilement que le carré composé $\mathcal D\circh\mathcal D^{\mathrm{\,g}}_{u',b'}\,\circh\,\mathcal D'$ n'est autre que le carré comma $\mathcal D^{\mathrm{\,g}}_{u,w(b')}$, défini par le foncteur $u$ et l'objet $w(b')$ de $B$. En vertu de la condition~\textbf{CI\,2g}, les carrés $\mathcal D^{\mathrm{\,g}}_{u',b'}$ et $\mathcal D^{\mathrm{\,g}}_{u,w(b')}$ sont $\Q$\nbd-exacts, et il en est de même du carré $\mathcal D'$, grâce à la condition~\textbf{\boldmath CI\,$1'$d}. La condition~\textbf{\boldmath CS\,$1_{\mathrm h}$} implique alors que le composé $\mathcal D^{\mathrm{\,g}}_{u',b'}\,\circh\,\mathcal D'$ est $\Q$\nbd-exact, et par suite, il résulte de la condition~\textbf{\boldmath CS\,$2_{\mathrm h}$} que le carré $\mathcal D$ est aussi $\Q$\nbd-exact.
\end{proof}


\begin{lemme} \label{condsuffexpropre}
Soit $\W$ un localisateur fondamental faible.
Toute classe $\Q$ de carrés satisfaisant aux conditions \emph{\textbf{\boldmath CS\,$1_{\mathrm h}$}, \textbf{\boldmath CS\,$2_{\mathrm h}$}, \textbf{CI\,2g} \emph{et} \textbf{CI\,1d} (\emph{resp}. \textbf{\boldmath CI\,$1'$d})} contient la classe des carrés cartésiens
\[
\mathcal{D}\ =\quad
\raise 23pt
\vbox{
\UseTwocells
\xymatrixcolsep{2.5pc}
\xymatrixrowsep{2.4pc}
\xymatrix{
&A'\ar[d]_{u'}\ar[r]^{v}
\drtwocell<\omit>{}
&A\ar[d]^{u}\ar@<-2ex>@{}[d]_(.55){=}
\\
&B'\ar[r]_{w}
&B
&\hskip -30pt,
}
}\qquad
\]
avec $u$ un morphisme $\W$\nbd-propre \emph{(}resp. une précofibration\emph{)}.
\end{lemme}

\begin{proof}
Soient $\mathcal D$ un tel carré cartésien, $b'$ un objet de $B'\!$, et considérons les diagrammes
\[
\xymatrixrowsep{.9pc}
\xymatrixcolsep{.9pc}
\UseAllTwocells
\xymatrix{
A_{b'}\ddrrcompositemap<\omit>{}
  \ar[rr]^{}
  \ar[dd]_{}
&&\cm{A'}{b'}\ddrrcompositemap<\omit>{}
  \ar[rr]^{}
  \ar[dd]^(.55){\hskip -24pt =}
&&A'\ar[dd]^{u'}
  \ddrrcompositemap<\omit>{}
  \ar[rr]^{v}
&&A\ar[dd]^u
  \ar[dd]^(.55){\hskip -24pt =}
&&A_{w(b')}\ddrrcompositemap<\omit>{}
  \ar[rr]
  \ar[dd]
&\kern -40pt
&\cm{A}{w(b')}\ar[dd]^(.55){\hskip -30pt =}
  \ar[rr]\ddrrcompositemap<\omit>{}
&&A\ar[dd]^u
\\
&&&&&&&\hbox{\kern 5pt et\kern -10pt}
&&\kern -40pt
&&
\\
e\ar[rr]_{}
&&e\ar[rr]_{b'}
&&B'\ar[rr]_{w}
&&B
&&e\ar[rr]
&\kern -20pt
&e\ar[rr]_{w(b')}
&&B
&.
\\
&\kern -9pt\mathcal D'\kern -9pt
&&\kern -11pt \mathcal D^{\mathrm{\,g}}_{u',b'}\kern -10pt
&&\kern 3pt\kern -6pt \mathcal D \kern 3pt
&&&&\kern -16pt\mathcal D''\kern -16pt
&&\kern -17pt \mathcal D^{\mathrm{\,g}}_{u,w(b')}\kern -10pt
}
\]
Comme le carré $\mathcal D$ est cartésien, les fibres $A'_{b'}$ et $A_{w(b')}$ sont canoniquement isomorphes, et on a un isomorphisme 
\[
\mathcal D\,\circh\, \mathcal D^{\mathrm{\,g}}_{u',b'}\,\circh\, \mathcal D'\,\simeq\, D^{\mathrm{\,g}}_{u,w(b')}\,\circh\, \mathcal D''
\]
des carrés composés. En vertu de la condition~\textbf{CI\,2g}, les carrés comma $\mathcal D^{\mathrm{\,g}}_{u',b'}$ et $\mathcal D^{\mathrm{\,g}}_{u,w(b')}$ sont $\Q$\nbd-exacts. Comme le morphisme $u$ est $\W$\nbd-propre (resp.~une précofibration), il en est de même du foncteur $u'$~\cite[corollaire 3.2.4]{Ast}, et les morphismes
\[
A'_{b'}\toto\cm{A'}{b'}\qquad\hbox{et}\qquad A_{w(b')}\toto\cm{A}{w(b')}
\]
sont $\W$\nbd-coasphériques (resp. admettent un adjoint à gauche). Il résulte donc de la condition~\textbf{CI\,1d} (resp.~\textbf{\boldmath CI\,$1'$d}) que les carrés $\mathcal D'$ et $\mathcal D''$ sont $\Q$\nbd-exacts. Les conditions~\textbf{\boldmath CS\,$1_{\mathrm h}$} et~\textbf{\boldmath CS\,$2_{\mathrm h}$} impliquent alors qu'il en est de même du carré $\mathcal D$.
\end{proof}

\begin{rem}
Ce lemme et son dual fournissent une nouvelle preuve de la proposition~\ref{proprelisseex}, puisque la classe des carrés $\W$\nbd-exacts satisfait à toutes les conditions du lemme, ainsi qu'aux conditions duales (\cf~\ref{classcarex}).
\end{rem}

\begin{lemme} \label{condsuffexfibrdiscr}
Toute classe $\Q$ de carrés satisfaisant aux conditions \emph{\textbf{\boldmath CS\,$3_{\mathrm h}$} \emph{et} \textbf{CI\,2g}} contient la classe des carrés cartésiens
\[
\mathcal{D}\ =\quad
\raise 23pt
\vbox{
\UseTwocells
\xymatrixcolsep{2.5pc}
\xymatrixrowsep{2.4pc}
\xymatrix{
&A'\ar[d]_{u'}\ar[r]^{v}
\drtwocell<\omit>{}
&A\ar[d]^{u}\ar@<-2ex>@{}[d]_(.55){=}
\\
&B'\ar[r]_{w}
&B
&\hskip -30pt,
}
}\qquad
\]
avec $w$ une fibration discrète.
\end{lemme}

\begin{proof}
Soient $\mathcal D$ un tel carré cartésien, $b'$ un objet de $B'$, et considérons les diagrammes
\[
\xymatrixrowsep{.9pc}
\xymatrixcolsep{.9pc}
\UseAllTwocells
\xymatrix{
\cm{A'}{b'}\ddrrcompositemap<\omit>{}
  \ar[rr]^{}
  \ar[dd]
&&A'\ar[dd]^{u'}
  \ddrrcompositemap<\omit>{}
  \ar[rr]^{v}
&&A\ar[dd]^u
  \ar[dd]^(.55){\hskip -24pt =}
&&\cm{A}{w(b')}\ar[dd]\ar[rr]\ddrrcompositemap<\omit>{}
&&A\ar[dd]^u
\\
&&&&&\hbox{\kern 20pt et\kern 5pt}
&&
\\
e\ar[rr]_{b'}
&&B'\ar[rr]_{w}
&&B
&&e\ar[rr]_{w(b')}
&&B
&.
\\
&\kern -11pt \mathcal D^{\mathrm{\,g}}_{u',b'}\kern -10pt
&&\kern 3pt\kern -6pt \mathcal D \kern 3pt
&&&&\kern -17pt \mathcal D^{\mathrm{\,g}}_{u,w(b')}\kern -10pt
}
\]
Comme $w$ est une fibration à fibres discrètes, la flèche $\cm{B'}{b'}\toto\cm{B}{w(b')}$, induite par $w$, est un isomorphisme, et comme le carré
\[
\xymatrixcolsep{1.8pc}
\xymatrixrowsep{2.pc}
\xymatrix{
&\cm{A'}{b'}\ar[r]\ar[d]
&\cm{A}{w(b')}\ar[d]
\\
&\cm{B'}{b'}\ar[r]
&\cm{B}{w(b')}
&\hskip -30pt,
}
\]
induit par $\mathcal D$, est cartésien~\cite[dual du lemme 3.2.11]{Ast}, il en est de même de la flèche $\cm{A'}{b'}\toto\cm{A}{w(b')}$, induite par $v$, et on a un isomorphisme
\[
\mathcal D\,\circh \mathcal D^{\mathrm{\,g}}_{u',b'}\,\simeq\, \mathcal D^{\mathrm{\,g}}_{u,w(b')}\smsp.
\]
En vertu de la condition \textbf{CI\,2g}, les carrés $\mathcal D^{\mathrm{\,g}}_{u',b'}$ et $\mathcal D^{\mathrm{\,g}}_{u,w(b')}$ sont $\Q$\nbd-exacts, et la condition~\textbf{\boldmath CS\,$3_{\mathrm h}$} implique alors qu'il en est de même de $\mathcal D$.
\end{proof}

\begin{rem} \label{remcondsuffexfibrdiscr}
Le lemme précédent implique en particulier que si $\Q$ est une classe de flèches satisfaisant aux conditions \textbf{\boldmath CS\,$3_{\mathrm h}$} et \textbf{CI\,2g}, alors pour tout morphisme $u:A\toto B$ de $\Cat$, et tout objet $b$ de $B$, le carré cartésien
\[
\UseTwocells
\xymatrixcolsep{2.5pc}
\xymatrixrowsep{2.4pc}
\xymatrix{
\cm{A}{b}\ar[d]_{}\ar[r]^{}
\drtwocell<\omit>{}
&A\ar[d]^{u}\ar@<-2ex>@{}[d]_(.55){=}
\\
\cm{B}{b}\ar[r]_{}
&B
}
\]
est $\Q$\nbd-exact.
\end{rem}

\begin{lemme} \label{lemmevarder4}
Soit $\Q$ une classe de carrés de $\Cat$ satisfaisant à la condition \emph{\textbf{\boldmath CS\,$1_{\mathrm h}$},} et contenant les carrés de Beck-Chevalley à droite et les carrés cartésiens de la forme
\[
\UseTwocells
\xymatrixcolsep{2.5pc}
\xymatrixrowsep{2.4pc}
\xymatrix{
&\cm{A}{b}\ar[d]_{}\ar[r]^{}
\drtwocell<\omit>{}
&A\ar[d]^{u}\ar@<-2ex>@{}[d]_(.55){=}
\\
&\cm{B}{b}\ar[r]_{}
&B
&\hskip -30pt,
}
\]
pour $u:A\toto B$ flèche de $\Cat$, et $b$ objet de $B$. Alors la classe $\Q$ satisfait aussi à la condition~\emph{\textbf{CI\,2g}.}
\end{lemme}

\begin{proof}
Soient $u:A\toto B$ une flèche de $\Cat$, $b$ un objet de $B$, et considérons le diagramme
\[
\UseTwocells
\xymatrixcolsep{2.5pc}
\xymatrixrowsep{2.4pc}
\xymatrix{
&\cm{A}{b}\ar[d]_{}\ar[r]^{=}
\drtwocell<\omit>{}
&\cm{A}{b}\ar[d]_{}\ar[r]^{}
\drtwocell<\omit>{}
&A\ar[d]^{u}\ar@<-2ex>@{}[d]_(.55){=}
\\
&e\ar[r]_-{(b,\id{b})}
&\cm{B}{b}\ar[r]_{}
&B
&\hskip -30pt.
}
\]
Comme $(b,\id{b})$ est un objet final de $\cm{B}{b}$, la flèche $e\toto\cm{B}{b}$ définie par cet objet admet un adjoint à gauche, et il en est tautologiquement de même pour l'endofoncteur identique de $\cm{A}{b}$. La catégorie but du morphisme de \og changement de base\fg{} correspondant étant la catégorie ponctuelle, ce dernier est forcément un isomorphisme. On en déduit que le carré de gauche est de Beck-Chevalley à droite, et par suite $\Q$\nbd-exact. La condition~\textbf{\boldmath CS\,$1_{\mathrm h}$} implique alors qu'il en est de même du carré composé, ce qui prouve le lemme.
\end{proof}

\section{Carrés exacts homotopiques et dérivateurs}

\begin{paragr} \titparagr{Prédérivateurs} \label{preder}
On rappelle qu'un \emph{prédérivateur} (de domaine $\Cat$) est un $2$\nbd-foncteur (strict) $\D:\op{\Cat}\toto\CAT$ de la $2$\nbd-catégorie des petites catégories vers celle des catégories (non nécessairement petites), contravariant en les $1$\nbd-flèches et en les $2$\nbd-flèches. Si $u:A\toto B$ est un morphisme de $\Cat$, le foncteur $\D(u):\D(B)\toto\D(A)$ est noté le plus souvent $u_{\D}^*\,$, ou même simplement $u^*$, quand aucune confusion n'en résulte. De même, si $\alpha$ est une transformation naturelle dans $\Cat$, alors $\D(\alpha)$ est noté $\alpha_\D^*$ ou $\alpha^*$. 
\[
\UseTwocells
{
\xymatrixcolsep{2.5pc}
\xymatrix{
A\rrtwocell<6>^u_v{\,\alpha}&&B\\
}
}
\quad\mapstoto\quad
\xymatrix{
\D(A)&&\D(B)\lltwocell<6>^{v^*}_{u^*}{\alpha^*\ }\\
}\quad
\]
Un prédérivateur $\D$ transforme un carré 
\[
\mathcal{D}\ =\quad
\raise 23pt
\vbox{
\UseTwocells
\xymatrixcolsep{2.5pc}
\xymatrixrowsep{2.4pc}
\xymatrix{
A'\ar[d]_{u'}\ar[r]^{v}
\drtwocell<\omit>{\alpha}
&A\ar[d]^{u}
\\
B'\ar[r]_{w}
&B
}
}\qquad
\]
de $\Cat$ en un carré
\[
\D(\mathcal{D})\ =\quad
\raise 23pt
\vbox{
\UseTwocells
\xymatrixcolsep{2.5pc}
\xymatrixrowsep{2.4pc}
\xymatrix{
\D(B)\ar[d]_{u^*}\ar[r]^{w^*}
\drtwocell<\omit>{\alpha^*}
&\D(B')\ar[d]^{{u'}^*}
\\
\D(A)\ar[r]_{v^*}
&\D(A')
}
}\qquad
\]
de $\CAT$. Si $\mathcal{D}$ et $\mathcal{D}'$ sont deux carrés de $\Cat$ composables horizontalement (resp. verticalement), alors les carrés $\D(\mathcal{D}')$ et $\D(\mathcal{D})$ de $\CAT$ le sont aussi, et on a les égalités
\begin{equation}
\D(\mathcal D\circh\mathcal D')=\D(\mathcal D')\circh\D(\mathcal D)\qquad
\hbox{(resp.}\quad
\D(\mathcal D\circv\mathcal D')=\D(\mathcal D')\circv\D(\mathcal D)
\ \ ).
\end{equation}
Si $\mathcal{D}$ est un carré de Beck-Chevalley à gauche (resp. à droite) de $\Cat$, il en est de même, par 2-fonctorialité, du carré $\D(\mathcal{D})$ de $\CAT$.
\end{paragr}

\begin{paragr} \titparagr{Dérivateurs} \label{defder}
On rappelle qu'un \emph{dérivateur}~\cite{Der},~\cite{Mal1} est un prédérivateur $\D$ satisfaisant les axiomes suivants.
\smallskip

\noindent
\textbf{Der\,1} \ \textbf{(\emph{Normalisation.})} Pour toute famille finie $(A_i)_{i\in I}$ de petites catégories le foncteur canonique
\[
\D\bigl({\textstyle\coprod\limits_i}A_i\bigr)\toto{\textstyle\prod\limits_i}\D(A_i)
\]
est une équivalence de catégories.
\smallbreak

\noindent
\textbf{Der\,2} \ \textbf{(\emph{Conservativité.})} Pour toute petite catégorie $A$, et toute flèche $\varphi:X\toto Y$ de $\D(A)$, si pour tout objet $a$ de $A$, la flèche $a^*(\varphi):a^*(X)\toto a^*(Y)$ (où $a:e\toto A$ désigne aussi le foncteur de la catégorie ponctuelle $e$ vers $A$, défini par $a$) est un isomorphisme de $\D(e)$, alors $\varphi$ est un isomorphisme de $\D(A)$ (autrement dit, la famille des foncteurs $a^*:\D(A)\toto\D(e)$, $a\in\Ob\,A$, est conservative).
\smallbreak

\noindent
\textbf{Der\,3g} \ \textbf{(\emph{Existence d'images directes cohomologiques.})} Pour toute flèche \hbox{$u:A\toto B$} de $\Cat$, le foncteur \emph{image inverse} $u^*:\D(B)\toto\D(A)$ admet un adjoint à droite $u^{}_*=u^{\D}_*:\D(A)\toto\D(B)$, foncteur \emph{image directe cohomologique}.
\smallbreak

\noindent
\textbf{Der\,3d} \ \textbf{(\emph{Existence d'images directes homologiques.})} Pour toute flèche \hbox{$u:A\toto B$} de $\Cat$, le foncteur image inverse $u^*:\D(B)\toto\D(A)$ admet un adjoint à gauche $u^{}_!=u^{\D}_!:\D(A)\toto\D(B)$, foncteur \emph{image directe homologique}.
\smallbreak

\noindent
\textbf{Der\,4g} \ \textbf{(\emph{Calcul des fibres des images directes cohomologiques.})} Pour toute flèche \hbox{$u:A\toto B$} de $\Cat$, et tout objet $b$ de $B$, l'image par $\D$ du carré comma
\[
\mathcal{D}_{u,b}^{\mathrm{\,g}}\ =\quad
\raise 23pt
\vbox{
\UseTwocells
\xymatrixcolsep{2.5pc}
\xymatrixrowsep{2.4pc}
\xymatrix{
\cm{A}{b}\ar[d]_{p}\ar[r]^{j}
\drtwocell<\omit>{\alpha}
&A\ar[d]^{u}
\\
e\ar[r]_{b}
&B
}
}\qquad
\]
est un carré de Beck Chevalley à gauche (autrement dit, le morphisme canonique $b^*u^{}_*\toto p^{}_*j^*$ est un isomorphisme).
\smallbreak

\noindent
\textbf{Der\,4d} \ \textbf{(\emph{Calcul des fibres des images directes homologiques.})} Pour toute flèche \hbox{$u:A\toto B$} de $\Cat$, et tout objet $b$ de $B$, l'image par $\D$ du carré comma
\[
\mathcal{D}_{u,b}^{\mathrm{\,d}}\ =\quad
\raise 23pt
\vbox{
\UseTwocells
\xymatrixcolsep{2.5pc}
\xymatrixrowsep{2.4pc}
\xymatrix{
\mc{b}{A}\ar[d]_{k}\ar[r]^{q}
\drtwocell<\omit>{\beta}
&e\ar[d]^{b}
\\
A\ar[r]_{u}
&B
}
}\qquad
\]
est un carré de Beck Chevalley à droite (autrement dit, le morphisme canonique $b^*u^{}_!\otot q^{}_!k^*$ est un isomorphisme).
\end{paragr}

\begin{paragr} \titparagr{Terminologie plus précise} \label{termdefder}
Soit $\D$ un prédérivateur. On dit que $\D$ est \emph{conservatif} s'il satisfait à l'axiome \textbf{Der\,2}, on dit qu'il \emph{admet des images directes cohomologiques} (resp.~\emph{homologiques}) s'il satisfait à l'axiome \textbf{Der\,3g} (resp.~\textbf{Der\,3d}). De façon plus précise, 
on dit qu'un morphisme $u:A\toto B$ de $\Cat$ \emph{admet une image directe cohomologique} (resp. \emph{homologique}) \emph{pour} $\D$ si le foncteur \hbox{$u^*:\D(B)\toto\D(A)$} admet un adjoint à droite (resp. à gauche) $u^{}_*:\D(A)\toto\D(B)$ (resp.\break \hbox{$u^{}_!:\D(A)\toto\D(B)$}).
On dit que le prédérivateur $\D$ est \emph{complet} (resp.~\emph{cocomplet}) s'il satisfait aux axiomes \textbf{Der\,3g} et \textbf{Der\,4g} (resp.~\textbf{Der\,3d} et \textbf{Der\,4d}). Un \emph{pseudo-dérivateur  faible à gauche} (resp.~\emph{à droite}) est un prédérivateur conservatif et complet (resp. cocomplet). S'il satisfait de plus à l'axiome \textbf{Der\,1}, on dit qu'il est un \emph{dérivateur  faible à gauche} (resp.~\emph{à droite}). Un dérivateur est donc un dérivateur faible à gauche \emph{et} à droite. Un \emph{pseudo-dérivateur} est un pseudo-dérivateur faible à gauche et à droite, autrement dit, un prédérivateur satisfaisant à \emph{tous} les axiomes d'un dérivateur \emph{sauf} \textbf{Der\,1}. La raison d'être de l'adjectif \og faible\fg{} sera expliquée ultérieurement~(\ref{derpasfaible}), où les variantes \og non faibles\fg{} seront introduites. Dans cet article, l'axiome de normalisation \textbf{Der\,1} ne joue aucun rôle. On s'intéressera donc surtout aux pseudo-dérivateurs, ainsi qu'à leurs variantes à gauche ou à droite.
\end{paragr}

\begin{paragr} \titparagr{Exemples de dérivateurs} \label{exder}
Il existe de nombreux exemples de dérivateurs.

\begin{subparagr} \titsubparagr{Le dérivateur des préfaisceaux} \label{derpref}
Soit $\C$ une catégorie. On définit un prédérivateur $\D_{\C}$, associant à toute petite catégorie $A$ la catégorie 
\[
\D_{\C}(A)=\Homint(\op{A},\C)
\] 
des préfaisceaux sur $A$ à valeurs dans $\C$, à tout foncteur $u:A\toto B$ entre petites catégories le foncteur
\[
u^*:\D_{\C}(B)\toto\D_{\C}(A)\smsp,\qquad G\mapstoto G\circ\op{u}
\]
et à toute transformation naturelle $\alpha:u\toto v$ entre foncteurs de $A$ vers $B$ dans $\Cat$, le morphisme de foncteurs $\alpha^*:v^*\toto u^*$, défini par
\[
\textstyle\alpha^*_{G,a}=G(\alpha_a):v^*(G)(a)=G(v(a))\toto G(u(a))=u^*(G)(a)\,,\ G:\op{B}\toto\C\,,\ a\in\Ob\,A\,.
\]
Ce prédérivateur satisfait toujours, sans aucune hypothèse sur $\C$, les axiomes \textbf{Der\,1} et \textbf{Der\,2}. Si la catégorie $\C$ est complète, il satisfait aussi aux axiomes \textbf{Der\,3g} et \textbf{Der\,4g}, le premier exprimant l'existence des extensions de Kan à droite, et le second leur calcul habituel (le foncteur $p^{}_*$, dans les notations de cet axiome, étant alors simplement le foncteur limite projective \smash{$\varprojlim_{\op{(\cm{A}{b})}}$}). Dualement, si la catégorie $\C$ est cocomplète, les axiomes \textbf{Der\,3d} et \textbf{Der\,4d} sont satisfaits. Ainsi, si la catégorie $\C$ est à la fois complète et cocomplète, $\D_{\C}$ est un dérivateur.
\end{subparagr}

\begin{subparagr} \titsubparagr{Le dérivateur associé à une catégorie de modèles} \label{dercatmod}
Soient $\C$ une catégorie et $W$ une classe de flèches de $\C$. On définit un prédérivateur $\D_{\C,W}$ en associant à toute petite catégorie $A$ la catégorie $\D_{\C,W}(A)$, obtenue de la catégorie des préfaisceaux sur $A$ à valeurs dans $\C$ en inversant formellement les morphismes de préfaisceaux qui sont dans $W$ argument par argument:
\[
\begin{aligned}
&\D_{\C,W}(A)=W^{-1}_A\D_{\C}(A)=W^{-1}_A\Homint(\op{A},\C)\smsp,\cr
\noalign{\vskip 3pt}
&W_A=\{\varphi\in\Fl\,\Homint(\op{A},\C)\mid\forall a\in\Ob\,A,\,\varphi^{}_a\in W\}\smsp,
\end{aligned}
\] 
les foncteurs et morphismes de foncteurs $u^*_{\D_{\C,W}}$ et $\alpha^*_{\D_{\C,W}}$ étant déduits des $u^*_{\D_{\C}}$ et $\alpha^*_{\D_{\C}}$ à l'aide de la propriété universelle de la localisation. Si la catégorie $\C$ est complète et cocomplète, et s'il existe une structure de catégorie de modèles de Quillen sur $\C$~\cite{Qu0}, avec $W$ comme classe d'équivalences faibles, alors le prédérivateur $\D_{\C,W}$ est un dérivateur~\cite{CiDer}. En fait, il suffit des conditions beaucoup plus faibles~\cite{CiCatDer}.
\end{subparagr}
\end{paragr}

\begin{paragr} \titparagr{Localisateur fondamental associé à un dérivateur} \label{locfondder}
Soit $\D$ un prédérivateur. On dit qu'une flèche $u:A\toto B$ de $\Cat$ est une $\D$\nbd-\emph{équivalence} si le foncteur $u^*:\D(B)\toto\D(A)$ induit un foncteur pleinement fidèle sur la sous-catégorie de $\D(B)$ formée des objets de la forme $q^*(X)$ avec $X$ objet de $\D(e)$, $q$ étant le foncteur $B\toto e$ de $B$ vers la catégorie ponctuelle. En d'autres termes, si l'on pose $p=qu:A\toto e$,
\[
\xymatrixcolsep{.7pc}
\xymatrix{
A\ar[rr]^{u}\ar[dr]_{p}
&&B\ar[dl]^{q}
\\
&e
}
\]
pour que la flèche $u$ soit une $\D$\nbd-équivalence, il faut et il suffit que pour tout couple d'objets $X,Y$ de $\D(e)$, l'application 
\[
\Hom^{}_{\D(B)}(q^*(X),q^*(Y))\toto\Hom^{}_{\D(A)}(p^*(X),p^*(Y))\smsp,
\]
induite par $u^*$, soit bijective. On en déduit que si le prédérivateur $\D$ admet des images directes cohomologiques (resp. homologiques), alors la flèche $u$ est une $\D$\nbd-équivalence si et seulement si le morphisme canonique de foncteurs
\[
q^{}_*q^*\toto p^{}_*p^*\qquad \hbox{(resp. \ }p^{}_!p^*\toto q^{}_!q^*\  )
\]
est un isomorphisme. On note $\W_{\D}$ la partie de $\Fl(\Cat)$ formée des $\D$\nbd-équivalences. On démontre que si le prédérivateur $\D$ est conservatif et complet (ou cocomplet), alors $\W_{\D}$ est un localisateur fondamental~\cite{Der},~\cite{Mal1}. Ainsi, on dispose alors de toutes les notions associées à un localisateur fondamental. On dira que la petite catégorie $A$ est $\D$\nbd-\emph{asphérique} si elle est $\W_{\D}$\nbd-asphérique, autrement dit, si le foncteur \hbox{$p^*:\D(e)\toto\D(A)$} est pleinement fidèle. De même, on dira que la flèche \hbox{$u:A\toto B$} est $\D$\nbd-\emph{asphérique}, $\D$\nbd-\emph{coasphérique}, une $\D$\nbd-\emph{équivalence universelle}, $\D$\nbd-\emph{lisse}, ou $\D$\nbd-\emph{propre}, si elle est $\W_{\D}$\nbd-asphérique, $\W_{\D}$\nbd-coasphérique, une $\W_{\D}$\nbd-équivalence universelle, $\W_{\D}$\nbd-lisse, ou $\W_{\D}$\nbd-propre respectivement. Si
\[
\xymatrixcolsep{.7pc}
\xymatrix{
A\ar[rr]^{u}\ar[dr]_{v}
&&B\ar[dl]^{w}
\\
&C
}
\]
est un triangle commutatif dans $\Cat$, on dira que $u$ est une $\D$\nbd-\emph{équivalence localement}, ou \emph{colocalement}, sur $C$ si elle est respectivement une $\W_\D$\nbd-équivalence localement, ou colocalement, sur $C$. Enfin, on dira qu'un carré de $\Cat$ est $\D$\nbd-\emph{exact} (resp.~$\D$\nbd-\emph{exact faible}) s'il est $\W_\D$\nbd-exact (resp.~$\W_\D$\nbd-exact faible).
\end{paragr}

\begin{ex} \label{exlocfondder}
Soient $\C$ une catégorie, et $\D=\D_{\C}$ le prédérivateur des préfaisceaux à valeurs dans $\C$ (\cf~\ref{derpref}). Alors on a (dans les notations des exemples~\ref{defWn} et~\ref{defWgr})
\[
\W_{\D}=
\left\{
\begin{aligned}
&\Wzer\smsp,\hskip 15pt\hbox{si $\C$ n'est pas une catégorie associée à un ensemble préordonné;}\cr
\noalign{\vskip 3pt}
&\Wgr\smsp,\hskip 10.9pt\hbox{si $\C$ est une catégorie associée à un ensemble préordonné non vide,}\cr
\noalign{\vskip -3pt}
&\hskip 57pt \hbox{et n'est pas équivalente à la catégorie ponctuelle;}\cr
\noalign{\vskip 3pt}
&\Wtr\smsp,\hskip 12pt\hbox{si $\C$ est vide ou équivalente à la catégorie ponctuelle.}
\end{aligned}
\right.
\]
En effet, on observe que pour toute petite catégorie $A$, et tout couple $X$ et $Y$ d'objets de $\C$, l'ensemble des morphismes du préfaisceau constant sur $A$ de valeur $X$ vers celui de valeur $Y$ est en bijection avec l'ensemble $\Hom^{}_{\C}(X,Y)^{\pi_{0}(A)}$, où $\pi_{0}(A)$ désigne l'ensemble des composantes connexes de la catégorie $A$. Par définition, dire qu'une flèche $u:A\toto B$ de $\Cat$ est une $\D$-équivalence signifie donc que pour tout couple $X$ et $Y$ d'objets de $\C$, l'application 
\[
\Hom^{}_{\C}(X,Y)^{\pi_{0}(B)}\toto\Hom^{}_{\C}(X,Y)^{\pi_{0}(A)}\smsp,
\]
définie en précomposant avec l'application $\pi_0(u):\pi_0(A)\toto\pi_0(B)$, est bijective. S'il existe des objets $X$, $Y$ de $\C$ tels que $\Hom^{}_{\C}(X,Y)$ ait au moins deux éléments, on vérifie facilement que cela équivaut à la bijectivité de l'application $\pi_0(u)$ elle-même. Si pour tout couple $X$ et $Y$ d'objets de $\C$, l'ensemble $\Hom^{}_{\C}(X,Y)$ a au plus un élément, mais il existe un couple pour lequel cet ensemble est vide, cela signifie simplement que les ensembles $\pi_{0}(A)$ et $\pi_{0}(B)$ sont tout deux vides ou tout deux non vides.
\end{ex}

\begin{ex}
Soient $\W$ un localisateur fondamental, et $\D=\D_{\Cat,\W}$ le pré\-dé\-ri\-va\-teur associé (\cf~\ref{dercatmod}). On démontre qu'on a alors $\W_{\D}=\W$~\cite[proposition~3.1.10,~(\emph{a})]{Ast}
\end{ex}

\begin{paragr} \titparagr{Carrés satisfaisant à la propriété de changement de base} \label{proprchangbase}
Soit $\D$ un prédérivateur. On dit qu'un carré 
\[
\mathcal{D}\ =\quad
\raise 23pt
\vbox{
\UseTwocells
\xymatrixcolsep{2.5pc}
\xymatrixrowsep{2.4pc}
\xymatrix{
A'\ar[d]_{u'}\ar[r]^{v}
\drtwocell<\omit>{\alpha}
&A\ar[d]^{u}
\\
B'\ar[r]_{w}
&B
}
}\qquad
\]
de $\Cat$ satisfait à la propriété de \emph{changement de base cohomologique} (resp. \emph{homologique}) \emph{pour} $\D$ si le carré $\D(\mathcal D)$ (\cf~\ref{preder}) est un carré de Beck-Chevalley à gauche (resp. à droite) (\cf~\ref{BeckChev}), autrement dit, si les morphismes $u$ et $u'$ (resp. $v$ et $w$) admettent des images directes cohomologiques (resp. homologiques), et si le \og morphisme de changement de base\fg
\[
w^*u^{}_*\toto u'_*v^*\qquad\hbox{(resp. \ }v^{}_!u'^*\toto u^*w^{}_! \ )
\]
est un isomorphisme. Quand aucune ambiguïté n'en résulte, on omettra la mention explicite du prédérivateur $\D$. Si les morphismes $u$, $u'$ admettent des images directes cohomologiques \emph{et} $v$, $w$ des images directes homologiques, alors $\mathcal{D}$ satisfait à la propriété de changement de base cohomologique si et seulement si il satisfait à la propriété de changement de base homologique (\cf~\ref{BeckChev}). On dira parfois dans ce cas, plus simplement, qu'il satisfait à la \emph{propriété de changement de base}. On a la proposition soritale suivante.
\end{paragr}

\begin{prop} \label{sorchangbase}
Soit $\D$ un prédérivateur.
\smallskip

\emph{i)} Tout carré de Beck-Chevalley à gauche \emph{(}resp. à droite\emph{)} satisfait à la propriété de changement de base cohomologique \emph{(}resp. homologique\emph{).}
\smallbreak

\emph{ii)} La classe des carrés de $\Cat$ satisfaisant à la propriété de changement de base cohomologique \emph{(}resp. homologique\emph{)} est stable par composition horizontale \emph{et} verticale.
\smallbreak

\emph{iii)} Si le prédérivateur $\D$ est complet \emph{(}resp. cocomplet\emph{),} pour toute flèche $u:A\toto B$ de $\Cat$, et tout objet $b$ de $B$, le carré comma $\mathcal{D}_{u,b}^{\mathrm{\,g}}$ \emph{(}resp. $\mathcal{D}_{u,b}^{\mathrm{\,d}}$\emph{)} satisfait à la propriété de changement de base cohomologique \emph{(}resp. homologique\emph{)}.
\end{prop}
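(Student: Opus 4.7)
The plan is to dispose of the three parts in order, each being essentially formal once the right viewpoint is adopted.

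For (i), I would invoke the 2\nbd-fonctorialit� of $\D$. Puisque $\D$ est contravariant � la fois en les 1\nbd-fl�ches et en les 2\nbd-fl�ches, une adjonction $u\dashv r$ dans $\Cat$ se transporte en une adjonction $u^*\dashv r^*$ dans $\CAT$, les unit� et co-unit� �tant obtenues comme images de la co-unit� et de l'unit� originales. Ainsi, pour un carr� $\mathcal D$ de Beck-Chevalley � gauche, les foncteurs $u^*$ et $u'^*$ admettent des adjoints � droite $r^*$ et $r'^*$ dans $\CAT$, de sorte que $u$ et $u'$ admettent des images directes cohomologiques pour $\D$, identifi�es respectivement � $r^*$ et $r'^*$. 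En appliquant $\D$ � l'isomorphisme de changement de base $c:vr'\toto rw$, on obtient un morphisme $c^*:w^*r^*\toto r'^*v^*$, et une comparaison directe des pastings d�finissants (en utilisant que $\D$ pr�serve whiskering et compositions verticales des 2\nbd-cellules) montre que $c^*$ s'identifie, modulo les identifications d'adjoints ci-dessus, au morphisme de Beck-Chevalley de $\D(\mathcal D)$. Puisque $c$ est un isomorphisme, son image l'est aussi. Le cas dual (Beck-Chevalley � droite, changement de base homologique) est analogue.

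Pour (ii), j'utiliserais le sorite standard qui \og colle\fg{} les morphismes de changement de base. Pour deux carr�s $\mathcal D,\mathcal D'$ composables horizontalement satisfaisant � la propri�t� cohomologique, le morphisme de changement de base de $\mathcal D\circh\mathcal D'$ se factorise sous la forme
\[
w'^*w^*u_*\toto w'^*u'_*v^*\toto u''_*v'^*v^*\smsp,
\]
o� la premi�re fl�che est le transform� par $w'^*$ du morphisme de changement de base de $\mathcal D$, et la seconde est celui de $\mathcal D'$ compos� avec $v^*$. Les deux �tant des isomorphismes par hypoth�se, leur compos� l'est aussi. La composition verticale, ainsi que la variante homologique, se traitent de fa�on analogue.

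Pour (iii), l'�nonc� n'est essentiellement qu'une reformulation des axiomes. Si $\D$ est complet, l'axiome \textbf{Der\,3g} garantit l'existence des adjoints � droite $u_*$ et $p_*$ pour les fl�ches $u:A\toto B$ et $p:\cm{A}{b}\toto e$, et l'axiome \textbf{Der\,4g} affirme pr�cis�ment que le morphisme de changement de base $b^*u_*\toto p_*j^*$ associ� au carr� comma $\mathcal D^{\mathrm{\,g}}_{u,b}$ est un isomorphisme. Le cas cocomplet/homologique r�sulte dualement de \textbf{Der\,3d} et \textbf{Der\,4d}. L'obstacle principal se situe dans la comptabilit� 2\nbd-cat�gorique de (i): il faut v�rifier que le morphisme de Beck-Chevalley, d�fini dans $\Cat$ par un pasting d'unit�, co-unit� et 2\nbd-cellule, est transport� par le 2\nbd-foncteur $\D$ au pasting analogue dans $\CAT$, via les identifications canoniques des adjoints � droite. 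L'argument est purement formel mais fastidieux � �crire explicitement.
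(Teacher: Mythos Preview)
Your proposal is correct and follows essentially the same approach as the paper. The only minor difference is in part (ii): the paper invokes the general stability of Beck-Chevalley squares (in $\CAT$) under horizontal and vertical composition (stated earlier in the paper, \cf~\ref{classcarBCg} and~\ref{classcarBCd}), whereas you reprove this fact directly by exhibiting the factorization of the base change morphism of the composite; this is the same content, just made explicit.
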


\begin{proof}
La première assertion résulte simplement de la $2$\nbd-fonctorialité de~$\D$, la deuxième de la stabilité de la classe des carrés de Beck-Chevalley à gauche (resp. à droite) par composition horizontale et verticale (\cf~\ref{classcarBCg} (resp.~\ref{classcarBCd})), et la troisième n'est qu'une reformulation de l'axiome \textbf{Der\,4g} (resp. \textbf{Der\,4d}).
\end{proof}

\begin{cor} \label{corsorchangbase}
Soit $\D$ un prédérivateur admettant des images directes cohomologiques. Alors tout carré de Beck-Chevalley à droite satisfait à la propriété de changement de base cohomologique.
\end{cor}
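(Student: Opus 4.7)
Le plan est de combiner la proposition~\ref{sorchangbase},~i), avec l'observation finale du paragraphe~\ref{BeckChev} concernant la transposition entre carrés de Beck-Chevalley à gauche et à droite.

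Tout d'abord, puisque $\mathcal D$ est de Beck-Chevalley à droite dans $\Cat$, en notant $v', w'$ les adjoints à gauche respectifs de $v, w$, la $2$\nbd-fonctorialité de $\D$ fournit $v'^*$ et $w'^*$ comme adjoints à gauche de $v^*$ et $w^*$ dans $\CAT$: c'est précisément le fait que $\D$ soit contravariant \emph{à la fois} sur les $1$\nbd-flèches et sur les $2$\nbd-flèches qui garantit la préservation du sens des adjonctions (adjoints à gauche envoyés sur adjoints à gauche). L'isomorphisme $c'$ du carré $\mathcal D$ s'envoie alors par $\D$ sur le morphisme canonique analogue pour $\D(\mathcal D)$, ce qui établit, comme dans la proposition~\ref{sorchangbase},~i), que le carré $\D(\mathcal D)$ est lui-même de Beck-Chevalley à droite dans $\CAT$.

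Ensuite, l'hypothèse que $\D$ admet des images directes cohomologiques fournit des adjoints à droite $u_*, u'_*$ de $u^*, u'^*$ dans $\CAT$. Dans le carré $\D(\mathcal D)$, les flèches \og verticales\fg{} admettent ainsi des adjoints à droite \emph{et} les flèches \og horizontales\fg{} admettent des adjoints à gauche: c'est exactement la situation décrite à la fin du paragraphe~\ref{BeckChev} (observation qui vaut dans toute $2$\nbd-catégorie), dans laquelle les morphismes canoniques de changement de base $c$ et $c'$ sont transposés l'un de l'autre. Les conditions de Beck-Chevalley à gauche et à droite coïncident donc pour $\D(\mathcal D)$; comme ce dernier est déjà de Beck-Chevalley à droite, il est aussi de Beck-Chevalley à gauche, ce qui est par définition la propriété de changement de base cohomologique cherchée.

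Le seul point demandant un peu de vigilance est le suivi précis du sens des adjonctions sous $\D$: la double contravariance de $\D$ fait que les adjoints à gauche dans $\Cat$ produisent des adjoints à gauche (et non à droite) dans $\CAT$, ce qui est \emph{indispensable} pour que le critère de transposition du paragraphe~\ref{BeckChev} s'applique. Une fois ce fait éclairci, le corollaire résulte de l'application formelle des deux résultats rappelés ci-dessus.
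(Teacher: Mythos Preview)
Votre démonstration est correcte et suit exactement le même plan que celle de l'article : on applique la proposition~\ref{sorchangbase},~(\emph{i}), pour obtenir que $\D(\mathcal D)$ est de Beck-Chevalley à droite, puis l'hypothèse sur les images directes cohomologiques fournit les adjoints à droite des flèches verticales, et l'observation finale du paragraphe~\ref{BeckChev} permet de conclure que $\D(\mathcal D)$ est aussi de Beck-Chevalley à gauche. Votre insistance sur la double contravariance de $\D$ (qui garantit que les adjoints à gauche sont envoyés sur des adjoints à gauche) explicite utilement un point que l'article laisse implicite dans la formule \og par $2$\nbd-fonctorialité\fg.
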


\begin{proof}
En vertu de l'assertion (\emph{i}) de la proposition précédente, l'image par $\D$ d'un tel carré est un carré de Beck-Chevalley à droite, et comme par hypothèse les foncteurs figurant dans ce carré admettent des adjoints à droite, ce carré est aussi un carré de Beck-Chevalley à gauche (\cf~\ref{BeckChev}), ce qui prouve l'assertion.
\end{proof}

\begin{lemme} \label{Der2gen}
Soient $\D$ un prédérivateur conservatif, et $w_j:B_j\toto B$, $j\in J$, une famille de flèches de $\Cat$, de même but $B$. Si \smash{$\Ob\,B=\bigcup\limits_{j\in J}w_j(\Ob\,B_j)$}, alors la famille des foncteurs $w_j^*$ est conservative.
\end{lemme}

\begin{proof} Soit $\varphi:X\toto Y$ une flèche de $\D(B)$ telle que pour tout $j\in J$, $w^*_j(\varphi)$ soit un isomorphisme. Pour tout objet $b$ de $B$, il existe $j\in J$, et un objet $b_j$ de $B_j$ tel que $b=w_j(b_j)$. Si on note aussi $b:e\toto B$ et $b_j:e\toto B_j$ les foncteurs définis par les objets $b$ et $b_j$ respectivement, on a $b^*(\varphi)=b_j^*w^*_j(\varphi)$, et par suite, $b^*(\varphi)$ est un isomorphisme. Comme le prédérivateur $\D$ est conservatif, on en déduit que $\varphi$ est un isomorphisme.
\end{proof}

\begin{prop} \label{descchangbase}
Soit $\D$ un prédérivateur conservatif admettant des images directes cohomologiques. Alors la classe des carrés satisfaisant à la propriété de changement de base cohomologique vérifie la condition de descente horizontale.
\end{prop}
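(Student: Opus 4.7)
\medskip

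\noindent Le plan est le suivant. On vise \`a montrer que le morphisme de changement de base $c^{}_{\mathcal D}:w^*u^{}_*\toto u'_*v^*$, attach\'e au carr\'e~$\mathcal D$, est un isomorphisme. La strat\'egie consiste \`a exploiter la conservativit\'e de la famille des foncteurs $w^*_j:\D(B')\toto\D(B_j)$, $j\in J$: gr\^ace \`a l'hypoth\`ese $\Ob\,B'=\bigcup_{j\in J}w_j(\Ob\,B_j)$ et au lemme~\ref{Der2gen}, cette famille est conservative, et il suffit donc de prouver que, pour tout $j\in J$, le morphisme $w^*_j(c^{}_{\mathcal D})$ est un isomorphisme.

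Le point technique crucial consistera \`a \'etablir la factorisation
\[
c^{}_{\mathcal D\circh\mathcal D_j}\,=\,\bigl(c^{}_{\mathcal D_j}\star v^*\bigr)\circ w^*_j(c^{}_{\mathcal D})
\]
du morphisme de changement de base du carr\'e compos\'e $\mathcal D\circh\mathcal D_j$, vu comme morphisme de $w^*_jw^*u^{}_*$ vers $(u_j)_*v^*_jv^*$. Cette compatibilit\'e entre morphismes de changement de base et composition horizontale de carr\'es est la version raffin\'ee du fait, d\'ej\`a utilis\'e dans la preuve de l'assertion~(\emph{ii}) de la proposition~\ref{sorchangbase}, que le compos\'e horizontal de deux carr\'es de Beck-Chevalley \`a gauche est de Beck-Chevalley \`a gauche; techniquement, elle r\'esulte d'une \'ecriture directe des d\'efinitions des morphismes de changement de base, utilisant les identit\'es classiques reliant unit\'es et co\"unit\'es d'adjonction ainsi que la $2$\nbd-fonctorialit\'e de~$\D$.

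Cette factorisation acquise, l'argument se conclut imm\'ediatement. Par hypoth\`ese, les carr\'es $\mathcal D_j$ et $\mathcal D\circh\mathcal D_j$ satisfont \`a la propri\'et\'e de changement de base cohomologique, si bien que $c^{}_{\mathcal D_j}$ et $c^{}_{\mathcal D\circh\mathcal D_j}$ sont des isomorphismes, d'o\`u l'isomorphie de $c^{}_{\mathcal D_j}\star v^*$ et, par suite, celle de $w^*_j(c^{}_{\mathcal D})$. La conservativit\'e de la famille $\{w^*_j\}_{j\in J}$ permet enfin de conclure.

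L'essentiel de la difficult\'e r\'eside dans la v\'erification soigneuse de la formule de factorisation ci-dessus, qui, bien que purement soritale, demande une manipulation attentive des transformations naturelles impliqu\'ees; le reste de l'argument est ensuite de nature purement formelle.
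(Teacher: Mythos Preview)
Votre d\'emonstration est correcte et suit exactement la m\^eme strat\'egie que celle de l'article: \'etablir la factorisation $c^{}_{\mathcal D\circh\mathcal D_j}=(c^{}_{\mathcal D_j}\star v^*)\circ(w_j^*\star c^{}_{\mathcal D})$, en d\'eduire que $w_j^*\star c^{}_{\mathcal D}$ est un isomorphisme pour tout~$j$, puis conclure par la conservativit\'e de la famille $(w_j^*)_{j\in J}$ via le lemme~\ref{Der2gen}. La seule diff\'erence est de pure pr\'esentation (vous \'ecrivez $w_j^*(c^{}_{\mathcal D})$ l\`a o\`u l'article \'ecrit $w_j^*\star c^{}_{\mathcal D}$).
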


\begin{proof}
Soient $J$ un ensemble, et 
\[
\xymatrixrowsep{.7pc}
\xymatrixcolsep{1.3pc}
\UseAllTwocells
\xymatrix{
&A'\ddrrcompositemap<\omit>{\alpha}
  \ar[rr]^{v}
  \ar[dd]_{u'}
&&A\ar[dd]^u
&&&A_j\ddrrcompositemap<\omit>{\ \alpha_j}
  \ar[rr]^{v_j}
  \ar[dd]_{u_j}
&&A'\ar[dd]^{u'}
\\
{\mathcal D}\ =
&&&&\hbox{et}
&{\mathcal D_j}\ =
&&&&,\ j\in J\ ,
\\
&B'\ar[rr]_w
&&B
&&&B_j\ar[rr]_{w_j}
&&B'
}
\]
des carrés dans $\Cat$ tels que $\Ob\,B'=\bigcup\limits_{j\in J}w_j(\Ob\,B_j)$, et tels que pour tout élément $j$ de $J$, les carrés $\mathcal D_j$ et $\mathcal D\circh\mathcal D_j$ satisfassent à la propriété de changement de base cohomologique. Notons 
\[
c^{}_{\mathcal D}:w^*u^{}_*\toto u'_*v^*\qquad\hbox{et}\qquad c^{}_{\mathcal D_j}:w^*_ju'_*\toto u^{}_j{}^{}_*v_j^*
\]
les morphismes de changement de base relatifs aux carrés $\mathcal D$ et $\mathcal D_j$. On vérifie aussitôt que le morphisme de changement de base relatif au carré composé $\mathcal D\circh\mathcal D_j$ est défini (pour un choix convenable des morphismes d'adjonction) par la formule
\[
c^{}_{\mathcal D\circh\mathcal D_j}=(c^{}_{\mathcal D_j}\star v^*)\circ(w_j^*\star c^{}_{\mathcal D}):(ww^{}_j)^*u^{}_*=w_j^*w^*u^{}_*\toto u^{}_j{}^{}_*v_j^*v^*=u^{}_j{}^{}_*(vv^{}_j)^*\smsp.
\]
Or par hypothèse, pour tout $j\in J$, les morphismes $c^{}_{\mathcal D_j}$ et $c^{}_{\mathcal D\circh\mathcal D_j}$ sont des isomorphismes, donc $w_j^*\star c^{}_{\mathcal D}$ aussi. Le lemme précédent implique alors que $c^{}_{\mathcal D}$ est un isomorphisme.
\end{proof}

\begin{cor} \label{desclocchangbase}
Soit $\D$ un prédérivateur conservatif et complet. Alors la classe des carrés satisfaisant à la propriété de changement de base cohomologique vérifie la condition de descente locale.
\end{cor}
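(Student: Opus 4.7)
Mon plan consiste � combiner trois ingr�dients d�j� �tablis dans ce qui pr�c�de. D'abord, la proposition~\ref{descchangbase} montre que la classe $\Q$ des carr�s satisfaisant � la propri�t� de changement de base cohomologique pour $\D$ v�rifie la condition de descente horizontale~\textbf{\boldmath CS\,$2_{\mathrm h}$}~; les hypoth�ses de cette proposition, � savoir la conservativit� de $\D$ et l'existence des images directes cohomologiques, sont manifestement satisfaites, puisque $\D$ est par hypoth�se conservatif et complet.

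Ensuite, l'axiome~\textbf{Der\,4g} (contenu dans la compl�tude de $\D$), traduit par l'assertion~(\emph{iii}) de la proposition~\ref{sorchangbase}, affirme exactement que pour toute fl�che $u:A\toto B$ de $\Cat$ et tout objet $b$ de $B$, le carr� comma $\mathcal{D}_{u,b}^{\mathrm{\,g}}$ appartient � $\Q$~: autrement dit, la classe $\Q$ satisfait � la condition d'initialisation~\textbf{CI\,2g}.

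Il suffit alors d'invoquer la tautologie~\eqref{implev} du paragraphe~\ref{classinit}, selon laquelle la conjonction de~\textbf{CI\,2g} et de~\textbf{\boldmath CS\,$2_{\mathrm h}$} entra�ne la condition de descente locale~\textbf{\boldmath CS\,$3_{\mathrm h}$}, ce qui ach�vera la preuve. Il n'y a pas d'obstacle technique propre � ce corollaire~: tout le travail non trivial a �t� absorb� dans la proposition~\ref{descchangbase}, o� la conservativit� de $\D$ intervient au travers du lemme~\ref{Der2gen}, et le pr�sent r�sultat n'est qu'un r�assemblage formel suivant le sch�ma codifi� par~\eqref{implev}.
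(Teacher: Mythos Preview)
Your argument is correct and follows exactly the same route as the paper's own proof: invoke the horizontal descent from proposition~\ref{descchangbase}, the condition~\textbf{CI\,2g} from proposition~\ref{sorchangbase}~(\emph{iii}), and conclude by the implication~\eqref{implev}.
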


\begin{proof}
Le corollaire résulte aussitôt de la proposition précédente, et de la proposition~\ref{sorchangbase}, (\emph{iii}) (\cf~\ref{implev}).
\end{proof}

\begin{thm} \label{derfgchangbase}
Soit $\D$ un prédérivateur conservatif et complet.
\smallskip

\emph{i)} Tout carré comma satisfait à la propriété de changement de base cohomologique.
\smallbreak

\emph{ii)} Tout carré cartésien de la forme
\[
\UseTwocells
\xymatrixcolsep{2.5pc}
\xymatrixrowsep{2.4pc}
\xymatrix{
&A'\ar[d]_{u'}\ar[r]^{v}
\drtwocell<\omit>{}
&A\ar[d]^{u}\ar@<-2ex>@{}[d]_(.55){=}
\\
&B'\ar[r]_{w}
&B
&\hskip -30pt,
}
\]
avec $u$ précofibration ou $w$ fibration discrète, satisfait à la propriété de changement de base cohomologique.
\end{thm}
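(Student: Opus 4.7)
Mon plan consiste \`a d\'eduire le th\'eor\`eme des lemmes g\'eom\'etriques~\ref{condsuffexcomma}, \ref{condsuffexpropre} et~\ref{condsuffexfibrdiscr} de la section pr\'ec\'edente, appliqu\'es \`a la classe~$\Q$ des carr\'es de $\Cat$ satisfaisant \`a la propri\'et\'e de changement de base cohomologique pour~$\D$. Il s'agira de v\'erifier que cette classe satisfait aux conditions~\textbf{\boldmath CS\,$1_{\mathrm h}$}, \textbf{\boldmath CS\,$2_{\mathrm h}$}, \textbf{\boldmath CS\,$3_{\mathrm h}$}, \textbf{\boldmath CI\,$1'$d} et~\textbf{CI\,2g}, apr\`es quoi les lemmes mentionn\'es fourniront imm\'ediatement les assertions voulues.

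Les conditions~\textbf{\boldmath CS\,$1_{\mathrm h}$} et~\textbf{CI\,2g} sont respectivement les assertions~(\emph{ii}) et~(\emph{iii}) de la proposition~\ref{sorchangbase}, la seconde utilisant la compl\'etude de~$\D$. La descente horizontale~\textbf{\boldmath CS\,$2_{\mathrm h}$} r\'esulte de la proposition~\ref{descchangbase}, qui n\'ecessite \`a la fois la conservativit\'e et l'existence d'images directes cohomologiques; la descente locale~\textbf{\boldmath CS\,$3_{\mathrm h}$} en d\'ecoule, \emph{via} le corollaire~\ref{desclocchangbase}.

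Le point le plus d\'elicat \`a v\'erifier est la condition~\textbf{\boldmath CI\,$1'$d}. Soit donc $u:A\toto B$ un foncteur admettant un adjoint \`a gauche. Dans le carr\'e consid\'er\'e, les deux fl\`eches horizontales, \`a savoir~$u$ et $\id{e}$, admettent toutes deux un adjoint \`a gauche; le morphisme canonique de changement de base associ\'e va entre deux foncteurs de but la cat\'egorie ponctuelle~$e$, et est donc forc\'ement un isomorphisme. Par cons\'equent, ce carr\'e est de Beck-Chevalley \`a droite, et il satisfait \`a la propri\'et\'e de changement de base cohomologique en vertu du corollaire~\ref{corsorchangbase}, qui s'applique puisque~$\D$ admet des images directes cohomologiques.

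Ces cinq v\'erifications \'etant acquises, l'assertion~(\emph{i}) s'obtient en invoquant le lemme~\ref{condsuffexcomma}. Pour l'assertion~(\emph{ii}), on utilisera le lemme~\ref{condsuffexpropre} dans sa variante concernant les pr\'ecofibrations (version qui requiert pr\'ecis\'ement~\textbf{\boldmath CI\,$1'$d}) pour traiter le cas o\`u~$u$ est une pr\'ecofibration, et le lemme~\ref{condsuffexfibrdiscr} (qui ne requiert que~\textbf{\boldmath CS\,$3_{\mathrm h}$} et~\textbf{CI\,2g}) pour traiter le cas o\`u~$w$ est une fibration discr\`ete. Aucune v\'erification calculatoire suppl\'ementaire n'est n\'ecessaire, tout le travail ayant \'et\'e effectu\'e dans l'\'etude abstraite des classes de carr\'es exacts.
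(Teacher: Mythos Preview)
Your proposal is correct and follows essentially the same approach as the paper's proof, which also deduces the theorem from lemmas~\ref{condsuffexcomma}, \ref{condsuffexpropre} and~\ref{condsuffexfibrdiscr} after checking that the class of squares satisfying cohomological base change meets the required conditions via propositions~\ref{sorchangbase} and~\ref{descchangbase} and corollaries~\ref{corsorchangbase} and~\ref{desclocchangbase}. Your explicit verification of~\textbf{\boldmath CI\,$1'$d} (the square is Beck-Chevalley \`a droite because the canonical morphism lands in~$e$) is exactly the content the paper packages into its reference to example~\ref{classcarBCd}.
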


\begin{proof}
Le théorème résulte des lemmes~\ref{condsuffexcomma}, \ref{condsuffexpropre} et~\ref{condsuffexfibrdiscr}, en observant que les hypothèses de ces lemmes sont satisfaites, grâce aux propositions~\ref{sorchangbase}, (\emph{ii}), (\emph{iii}) et~\ref{descchangbase}, et aux corollaires~\ref{corsorchangbase} et~\ref{desclocchangbase}, en tenant compte de l'exemple~\ref{classcarBCd}.
\end{proof}

\begin{rem}
La première partie du théorème précédent implique aussitôt que si un prédérivateur conservatif et complet admet des images directes homologiques, alors il est forcément cocomplet. En particulier, dans la définition d'un dérivateur, l'axiome \textbf{Der\,4d} est superflu, étant conséquence des autres axiomes. Dualement, on peut omettre l'axiome \textbf{Der\,4g} (mais pas les deux à la fois!).
\end{rem}

\begin{prop}
Soit $\D$ un prédérivateur conservatif admettant des images directes cohomologiques. Les conditions suivantes sont équivalentes:
\begin{GMitemize}
\item[\rm a)] $\D$ est complet;
\item[\rm b)] pour toute flèche $u:A\toto B$, et tout objet $b$ de $B$, le carré comma
\[
\mathcal{D}_{u,b}^{\mathrm{\,g}}\ =\quad
\raise 23pt
\vbox{
\UseTwocells
\xymatrixcolsep{2.5pc}
\xymatrixrowsep{2.4pc}
\xymatrix{
\cm{A}{b}\ar[d]_{}\ar[r]^{}
\drtwocell<\omit>{}
&A\ar[d]^{u}
\\
e\ar[r]_{b}
&B
}
}\qquad
\]
satisfait à la propriété de changement de base cohomologique;
\item[\rm c)] pour toute flèche $u:A\toto B$, et tout objet $b$ de $B$, le carré cartésien
\[
\UseTwocells
\xymatrixcolsep{2.5pc}
\xymatrixrowsep{2.4pc}
\xymatrix{
&\cm{A}{b}\ar[d]_{}\ar[r]^{}
\drtwocell<\omit>{}
&A\ar[d]^{u}\ar@<-2ex>@{}[d]_(.55){=}
\\
&\cm{B}{b}\ar[r]_{}
&B
&\hskip -30pt,
}
\]
satisfait à la propriété de changement de base cohomologique.
\end{GMitemize}
\end{prop}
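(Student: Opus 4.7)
Le plan est de d\'emontrer les implications (a) $\Leftrightarrow$ (b), (a) $\Rightarrow$ (c) et (c) $\Rightarrow$ (b), l'\'equivalence compl\`ete s'ensuivant imm\'ediatement. L'implication (a) $\Leftrightarrow$ (b) est essentiellement tautologique: par hypoth\`ese, $\D$ admet des images directes cohomologiques, donc satisfait \`a l'axiome \textbf{Der\,3g}. Par d\'efinition de la compl\'etude, la condition (a) \'equivaut alors \`a l'axiome suppl\'ementaire \textbf{Der\,4g}, qui est exactement la condition (b).

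Pour l'implication (a) $\Rightarrow$ (c), j'invoquerai directement le th\'eor\`eme~\ref{derfgchangbase}\,(ii). Il suffit d'observer que le foncteur d'oubli $\cm{B}{b}\toto B$ est une fibration discr\`ete: \'etant donn\'es un objet $(b',g:b'\toto b)$ de $\cm{B}{b}$ et une fl\`eche $f:b''\toto b'$ de $B$, l'unique rel\`evement de $f$ au-dessus de $(b',g)$ est la fl\`eche $(b'',gf)\toto(b',g)$ de $\cm{B}{b}$. Sous l'hypoth\`ese (a) de compl\'etude, le th\'eor\`eme~\ref{derfgchangbase}\,(ii) s'applique donc au carr\'e cart\'esien consid\'er\'e, et fournit la propri\'et\'e de changement de base cohomologique.

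Pour l'implication (c) $\Rightarrow$ (b), j'appliquerai le lemme~\ref{lemmevarder4} \`a la classe $\Q$ des carr\'es de $\Cat$ satisfaisant \`a la propri\'et\'e de changement de base cohomologique pour $\D$. Cette classe est stable par composition horizontale en vertu de la proposition~\ref{sorchangbase}\,(ii), donc v\'erifie \textbf{\boldmath CS\,$1_{\mathrm h}$}; elle contient les carr\'es de Beck-Chevalley \`a droite d'apr\`es le corollaire~\ref{corsorchangbase} (qui s'applique car $\D$ admet des images directes cohomologiques); et elle contient par hypoth\`ese (c) les carr\'es cart\'esiens de la forme requise par le lemme. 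Les trois hypoth\`eses du lemme~\ref{lemmevarder4} sont donc satisfaites, et l'on en d\'eduit que $\Q$ v\'erifie \textbf{CI\,2g}. Autrement dit, tout carr\'e comma $\mathcal{D}_{u,b}^{\mathrm{g}}$ appartient \`a $\Q$, ce qui est pr\'ecis\'ement la condition (b).

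Aucun obstacle r\'eel n'est \`a pr\'evoir, puisque tout le travail technique a d\'ej\`a \'et\'e effectu\'e dans le th\'eor\`eme~\ref{derfgchangbase} et le lemme~\ref{lemmevarder4}. La seule v\'erification \'el\'ementaire qui reste est le caract\`ere de fibration discr\`ete de $\cm{B}{b}\toto B$, cons\'equence imm\'ediate de la description des objets et fl\`eches de $\cm{B}{b}$.
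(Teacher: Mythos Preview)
Your proof is correct and follows essentially the same route as the paper's. The only cosmetic difference is that for the direction (a)/(b) $\Rightarrow$ (c) you invoke th\'eor\`eme~\ref{derfgchangbase}\,(ii) directly (via the discrete fibration $\cm{B}{b}\toto B$), whereas the paper cites the underlying remarque~\ref{remcondsuffexfibrdiscr} together with proposition~\ref{sorchangbase}\,(iii) and corollaire~\ref{desclocchangbase}; the content is the same, and the implication (c) $\Rightarrow$ (b) is handled identically via lemme~\ref{lemmevarder4}, proposition~\ref{sorchangbase}\,(ii) and corollaire~\ref{corsorchangbase}.
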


\begin{proof}
L'équivalence des conditions (\emph{a}) et (\emph{b}) vient d'une simple reformulation des définitions. L'implication (\emph{b}) $\Rightarrow$ (\emph{c}) résulte de la remarque~\ref{remcondsuffexfibrdiscr}, de la proposition~\ref{sorchangbase}, (\emph{iii}), et du corollaire~\ref{desclocchangbase}. L'implication (\emph{c}) $\Rightarrow$ (\emph{b}) est conséquence du lemme~\ref{lemmevarder4}, dont les hypothèses sont satisfaites, grâce à la proposition~\ref{sorchangbase}, (\emph{ii}), et au corollaire~\ref{corsorchangbase}
\end{proof}

\begin{rem}
La proposition précédente implique que dans la définition d'un dérivateur, l'axiome \textbf{Der\,4g} peut être remplacé par la condition (\emph{c}) ci-dessus, et dualement pour l'axiome \textbf{Der\,4d}.
\end{rem}

\begin{prop} \label{carcatDasph}
Soient $\D$ un prédérivateur conservatif et complet, et $A$ une petite catégorie. Considérons le carré
\[
\hskip 15pt
\mathcal{D}\ =\quad
\raise 23pt
\vbox{
\UseTwocells
\xymatrixcolsep{2.5pc}
\xymatrixrowsep{2.4pc}
\xymatrix{
A\ar[d]_{p}\ar[r]^{p}
\drtwocell<\omit>{}
&e\ar[d]^{\vrule height 3pt depth 2pt width .3pt\kern 1.2pt\vrule height 3pt depth 2pt width .3pt}\ar@<-2ex>@{}[d]_(.55){=}\ar@{}[d]
\\
e\ar[r]_{=}
&e
&\hskip -30pt,
}
}
\]
où $e$ désigne la catégorie ponctuelle. Les conditions suivantes sont équivalentes:
\begin{GMitemize}
\item[\rm a)] $A$ est $\D$-asphérique;
\item[\rm b)] le carré $\mathcal D$ est $\D$-exact;
\item[\rm c)] le carré $\mathcal D$ satisfait à la propriété de changement de base cohomologique;
\item[\rm d)] le morphisme d'adjonction $\id{\D(e)}\toto p^{}_*p^*$ est un isomorphisme.
\end{GMitemize}
\end{prop}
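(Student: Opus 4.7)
The plan is to organize the proof as three independent short arguments, one for each of the equivalences (a) $\iff$ (b), (c) $\iff$ (d), and (a) $\iff$ (d), which together yield the full cycle.

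First I would handle (a) $\iff$ (b) by direct invocation of Example~\ref{extrcarex}. The square $\mathcal D$ is precisely of the shape treated there, with $B=e$, so by that example $\mathcal D$ is $\W_\D$\nbd-exact if and only if the morphism $p:A\toto e$ is $\W_\D$\nbd-asph\'erique, which (its target being the final category) is equivalent to $A$ being $\W_\D$\nbd-asph\'erique, i.e., $\D$\nbd-asph\'erique in the sense of~\ref{locfondder}. Note that we may invoke the example because, $\D$ being conservatif and complet, $\W_\D$ is a localisateur fondamental (faible) by~\ref{locfondder}.

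Next, (c) $\iff$ (d) is just an identification of the base change morphism for this particular square. With the notations of~\ref{BeckChev} applied to $\mathcal D$, we have $u=w=\id{e}$ and $u'=v=p$; the right adjoint of $\id{e}$ may be taken to be $\id{e}$, while $p$ admits the right adjoint $p^{}_*$ by compl\'etude. A direct unwinding of the formula defining $c^{}_{\mathcal D}:w^*u^{}_*\toto u'_*v^*$ shows that, after identification of $\id{e}^*$ with the identity functor of $\D(e)$, this morphism is exactly the unit of adjunction $\eta:\id{\D(e)}\toto p^{}_*p^*$. Hence (c), which asserts that $c^{}_{\mathcal D}$ is an isomorphism, is equivalent to (d).

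Finally, (a) $\iff$ (d) is the classical characterization of fully faithful left adjoints: a left adjoint is fully faithful if and only if the unit of the adjunction is an isomorphism. Since $\D$ est complet, $p^*$ admits $p^{}_*$ as right adjoint, and by the definition recalled in~\ref{locfondder}, $A$ est $\D$\nbd-asph\'erique pr\'ecis\'ement lorsque $p^*:\D(e)\toto\D(A)$ est pleinement fid\`ele; this happens iff $\eta:\id{\D(e)}\toto p^{}_*p^*$ is an isomorphism, which is (d).

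There is no real obstacle to overcome: the whole argument is an assembly of three immediate observations, each drawing on a single piece of earlier material (Example~\ref{extrcarex}, the definition of the base change morphism, and the standard adjunction criterion for fully faithfulness). The only verification requiring care is the identification of $c^{}_{\mathcal D}$ with the unit $\eta$, but in the degenerate case where the horizontal arrows are identities and $\alpha$ is the identity $2$\nbd-cell, this reduces to tracking composites of identity $2$\nbd-cells.
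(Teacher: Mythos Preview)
Your proposal is correct and follows essentially the same approach as the paper's own proof: the paper also establishes the three equivalences (a)\,$\Leftrightarrow$\,(b) via Example~\ref{extrcarex}, (c)\,$\Leftrightarrow$\,(d) by identifying the base change morphism with the adjunction unit, and (a)\,$\Leftrightarrow$\,(d) via the characterization of $\D$\nbd-asph\'ericit\'e by full faithfulness of $p^*$. Your write-up is slightly more explicit (you spell out the standard fact that a left adjoint is fully faithful iff the unit is an isomorphism, which the paper leaves implicit), but the structure and content of the argument are the same.
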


\begin{proof}
L'équivalence des conditions (\emph{a}) et (\emph{b}) est immédiate (\cf~\ref{extrcarex}). De même, l'équivalence des conditions (\emph{c}) et (\emph{d}) est tautologique, puisque le morphisme d'adjonction $\id{\D(e)}\simeq(\id{e})^*(\id{e})_*\toto p^{}_*p^*$ n'est autre que le morphisme de changement de base relatif au carré $\mathcal D$. Enfin, par définition (\cf~\ref{locfondder}), la catégorie $A$ est $\D$\nbd-asphérique si et seulement si le foncteur $p^*$ est pleinement fidèle, ce qui prouve l'équivalence des conditions (\emph{a}) et (\emph{d}).
\end{proof}

\begin{prop} \label{carfonctDasph}
Soient $\D$ un prédérivateur conservatif et complet, et $u:A\toto B$ une flèche de $\Cat$. Considérons le carré
\[
\hskip 15pt
\mathcal{D}\ =\quad
\raise 23pt
\vbox{
\UseTwocells
\xymatrixcolsep{2.5pc}
\xymatrixrowsep{2.4pc}
\xymatrix{
A\ar[d]_{u}\ar[r]^{p}
\drtwocell<\omit>{}
&e\ar[d]^{\vrule height 3pt depth 2pt width .3pt\kern 1.2pt\vrule height 3pt depth 2pt width .3pt}\ar@<-2ex>@{}[d]_(.55){=}\ar@{}[d]
\\
B\ar[r]_{q}
&e
&\hskip -30pt,
}
}
\]
où $e$ désigne la catégorie ponctuelle. Les conditions suivantes sont équivalentes:
\begin{GMitemize}
\item[\rm a)] $u$ est $\D$-asphérique;
\item[\rm b)] le carré $\mathcal D$ est $\D$-exact;
\item[\rm c)] le carré $\mathcal D$ satisfait à la propriété de changement de base cohomologique;
\item[\rm d)] le morphisme canonique $q^*\toto u^{}_*p^*=u^{}_*u^*q^*$, défini par le morphisme d'adjonction, est un isomorphisme.
\end{GMitemize}
\end{prop}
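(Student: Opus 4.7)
The plan is to establish the equivalences in two groups: the easy pairs \textup{(a)} $\Leftrightarrow$ \textup{(b)} and \textup{(c)} $\Leftrightarrow$ \textup{(d)}, obtained by unwinding definitions, and then to close the loop via \textup{(a)} $\Leftrightarrow$ \textup{(c)}, which uses a descent argument along the horizontal composition with comma squares.

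First, \textup{(a)} $\Leftrightarrow$ \textup{(b)} is immediate: the carr\'e $\mathcal D$ is precisely of the shape considered in paragraph~\ref{extrcarex} applied to the localisateur fondamental $\W_\D$, hence $\mathcal D$ is $\W_\D$-exact if and only if $u$ is $\W_\D$-asph\'erique, which is the definition of being $\D$-asph\'erique recalled in~\ref{locfondder}. Next, \textup{(c)} $\Leftrightarrow$ \textup{(d)} is tautological: the identity endofunctor of $\D(e)$ is its own right adjoint with identity unit, so for this particular choice the morphisme de changement de base relatif \`a $\mathcal D$ reads exactly as the unit $q^*\toto u^{}_*u^*q^*=u^{}_*p^*$.

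The substantive step is \textup{(a)} $\Leftrightarrow$ \textup{(c)}. For each object $b$ of $B$ I would form the compos\'e horizontal $\mathcal D\circh\mathcal D^{\mathrm{\,g}}_{u,b}$, where $\mathcal D^{\mathrm{\,g}}_{u,b}$ is the carr\'e comma of $u$ over $b$, and observe that this composite is precisely the carr\'e
\[
\UseTwocells
\xymatrixcolsep{2.5pc}
\xymatrixrowsep{2.4pc}
\xymatrix{
\cm{A}{b}\ar[d]\ar[r]
\drtwocell<\omit>{}
&e\ar[d]\ar@<-2ex>@{}[d]_(.55){=}
\\
e\ar[r]_{=}
&e
}
\]
considered in the preceding Proposition~\ref{carcatDasph} applied to the category $\cm{A}{b}$. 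Since $\D$ is conservative and complete, Theorem~\ref{derfgchangbase} ensures that each comma square $\mathcal D^{\mathrm{\,g}}_{u,b}$ satisfies the propri\'et\'e de changement de base cohomologique.

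From this identification, \textup{(a)} $\Rightarrow$ \textup{(c)} follows by combining three ingredients: assuming $u$ is $\D$-asph\'erique, each $\cm{A}{b}$ is $\D$-asph\'erique, so by Proposition~\ref{carcatDasph} the composite $\mathcal D\circh\mathcal D^{\mathrm{\,g}}_{u,b}$ satisfies base change; then Proposition~\ref{descchangbase} (horizontal descent for base change) applied to the covering family $b:e\toto B$, $b\in\Ob\,B$, of $\Ob\,B$ yields base change for $\mathcal D$ itself. The converse \textup{(c)} $\Rightarrow$ \textup{(a)} is a direct consequence of Proposition~\ref{sorchangbase},~\textup{(ii)} (stabilit\'e par composition horizontale): \textup{(c)} and base change for the comma square force the composite $\mathcal D\circh\mathcal D^{\mathrm{\,g}}_{u,b}$ to satisfy base change, hence by Proposition~\ref{carcatDasph} each $\cm{A}{b}$ is $\D$-asph\'erique, which is the definition of $u$ being $\D$-asph\'erique. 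The only point requiring care is matching the composite $\mathcal D\circh\mathcal D^{\mathrm{\,g}}_{u,b}$ with the shape treated in Proposition~\ref{carcatDasph}; once this is done, the argument is a pure sorite built on results already assembled.
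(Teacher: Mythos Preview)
Your proof is correct and follows essentially the same route as the paper's own argument: the same two trivial equivalences, and the same reduction of \textup{(a)} $\Leftrightarrow$ \textup{(c)} to Proposition~\ref{carcatDasph} via horizontal composition with the comma squares $\mathcal D^{\mathrm{\,g}}_{u,b}$ and a descent argument. The only cosmetic differences are that the paper invokes Proposition~\ref{sorchangbase},~(\emph{iii}) (which is just axiom \textbf{Der\,4g}) rather than the stronger Theorem~\ref{derfgchangbase} for the comma squares, and cites Corollary~\ref{desclocchangbase} (local descent) directly rather than the general Proposition~\ref{descchangbase}; your choices are valid but slightly heavier than necessary.
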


\begin{proof}
L'équivalence des conditions (\emph{a}) et (\emph{b}) est immédiate (\cf~\ref{extrcarex}). De même, l'équivalence des conditions (\emph{c}) et (\emph{d}) est tautologique, puisque le morphisme canonique $q^*\simeq q^*{\id{e}}_*\toto u^{}_*p^*$ n'est autre que le morphisme de changement de base relatif au carré $\mathcal D$. Montrons l'équivalence des conditions (\emph{a}) et (\emph{c}). 
\gmremarque{
En vertu de l'axiome \textbf{Der\,2}, pour que le morphisme $q^*\toto u^{}_*p^*$ soit un isomorphisme, il faut et il suffit que pour tout objet $b$ de $B$, la flèche $b^*q^*\toto b^*u^{}_*p^*$ (où $b:e\toto B$ désigne le foncteur défini par l'objet $b$ de $B$) le soit. Or, $b^*q^*=(qb)^*={\id{e}}^*=\id{\D(e)}$, et en vertu de l'axiome \textbf{Der\,4g} on a un isomorphisme
\[
b^*u^{}_*\Toto{1.3}{\sim}r^{}_*j^*
\hskip 30pt
\raise 23pt
\vbox{
\UseTwocells
\xymatrixcolsep{2.5pc}
\xymatrixrowsep{2.4pc}
\xymatrix{
\cm{A}{b}\ar[d]_{r}\ar[r]^{j}
\drtwocell<\omit>{\alpha}
&A\ar[d]^{u}
\\
e\ar[r]_{b}
&B
}
}\qquad
\]
}
Soit $b$ un objet de $B$, et considérons le diagramme
\[
\xymatrixrowsep{.9pc}
\xymatrixcolsep{.9pc}
\UseAllTwocells
\xymatrix{
\cm{A}{b}\ddrrcompositemap<\omit>{}
  \ar[rr]^{}
  \ar[dd]
&&A\ar[dd]^{u}
  \ddrrcompositemap<\omit>{}
  \ar[rr]^{p}
&&e\ar[dd]^{\vrule height 3pt depth 2pt width .3pt\kern 1.2pt\vrule height 3pt depth 2pt width .3pt}
  \ar[dd]^(.55){\hskip -24pt =}
\\
\\
e\ar[rr]_{b}
&&B\ar[rr]_{q}
&&e
&.
\\
&\kern -11pt \mathcal D^{\mathrm{\,g}}_{u,b}\kern -10pt
&&\kern 3pt\kern -6pt \mathcal D \kern 3pt
}
\]
Il résulte de la proposition~\ref{sorchangbase}, (\emph{ii}), (\emph{iii}), et du corollaire~\ref{desclocchangbase}, que le carré $\mathcal D$ satisfait à la propriété de changement de base cohomologique si et seulement si pour tout objet $b$ de $B$, le carré composé $\mathcal D\circh\mathcal D^{\mathrm{\,g}}_{u,b}$ vérifie cette propriété, autrement dit en vertu de la proposition précédente, si et seulement si la catégorie $\cm{A}{b}$ est $\D$\nbd-asphérique, ce qui prouve l'assertion.
\end{proof}

\begin{rem}
Soit $\D$ un prédérivateur conservatif et complet. Dans les notation de la section précédente, on a montré que la classe des carrés satisfaisant à la propriété de changement de base cohomologique pour $\D$ vérifie les conditions de stabilité \textbf{\boldmath CS\,$1_{\mathrm h}$},
\textbf{\boldmath CS\,$1_{\mathrm v}$}, \textbf{\boldmath CS\,$2_{\mathrm h}$}, \textbf{\boldmath CS\,$3_{\mathrm h}$} (\cf~propositions~\ref{sorchangbase}, (\emph{ii}), et \ref{descchangbase}, et corollaire~\ref{desclocchangbase}) et les conditions \og d'initialisation\fg{} \textbf{CI\,1g} (relativement au localisateur fondamental des $\D$\nbd-équivalences), \textbf{\boldmath CI\,$1'$d}, \textbf{CI\,2g} (\cf~propositions~\ref{sorchangbase}, (\emph{iii}), et \ref{carfonctDasph}, et corollaire~\ref{corsorchangbase}), et donc aussi, à plus forte raison, les conditions~\textbf{\boldmath CI\,$1_0$} (relativement au localisateur fondamental des $\D$\nbd-équivalences) et \textbf{\boldmath CI\,$1'$g}.
\end{rem}

\begin{rem}
Dualement, si $\D$ est un prédérivateur conservatif et cocomplet, un foncteur entre petites catégories $u:A\toto B$ est $\D$\nbd-coasphérique si et seulement si le carré 
\[
\mathcal{D}\ =\quad
\raise 23pt
\vbox{
\UseTwocells
\xymatrixcolsep{2.5pc}
\xymatrixrowsep{2.4pc}
\xymatrix{
A\ar[d]_{p}\ar[r]^{u}
\drtwocell<\omit>{}
&B\ar[d]^{q}\ar@<-2ex>@{}[d]_(.55){=}
\\
e\ar[r]_{=}
&e
}
}
\]
satisfait à la propriété de changement de base homologique, autrement dit si le morphisme canonique $u^{}_!p^*\toto q^*$ est un isomorphisme. Si de plus le prédérivateur $\D$ admet aussi des images directes cohomologiques (en particulier si $\D$ est un dérivateur), cela équivaut à demander que le morphisme transposé $q^{}_*\toto p^{}_*u^*$ soit un isomorphisme, autrement dit, que le carré $\mathcal D$ satisfasse à la propriété de changement de base cohomologique. On est ainsi conduit à poser la définition suivante.
\end{rem}

\begin{df} \label{defcritcohcoasph}
Soit $\D$ un prédérivateur admettant des images directes cohomologiques. On dit que $\D$ \emph{satisfait le critère cohomologique pour les morphismes coasphériques} si pour toute flèche $\D$\nbd-coasphérique $u:A\toto B$ de $\Cat$, le carré
\[
\UseTwocells
\xymatrixcolsep{2.5pc}
\xymatrixrowsep{2.4pc}
\xymatrix{
A\ar[d]_{}\ar[r]^{u}
\drtwocell<\omit>{}
&B\ar[d]^{}\ar@<-2ex>@{}[d]_(.55){=}
\\
e\ar[r]_{}
&e
}
\]
satisfait à la propriété de changement de base cohomologique.
\end{df}

\begin{rem}
On ne connaît pas de prédérivateur conservatif et complet ne satisfaisant pas ce critère. Néanmoins, il ne semble pas être automatique si on ne suppose pas également l'existence d'images directes homologiques.
\end{rem}

\begin{thm} \label{dergchangbase}
Soit $\D$ un prédérivateur conservatif et complet satisfaisant le critère cohomologique pour les morphismes coasphériques \emph{(par exemple un dérivateur)}. Alors tout carré $\D$\nbd-exact satisfait à la propriété de changement de base cohomologique.
\end{thm}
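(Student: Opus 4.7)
Le plan est d'appliquer le th�or�me~\ref{carclassex1} au localisateur fondamental $\W=\W_\D$ des $\D$\nbd-�quivalences (\cf~\ref{locfondder}). D'apr�s ce th�or�me, la classe des carr�s $\W_\D$\nbd-exacts est la plus petite classe de carr�s de $\Cat$ stable par composition horizontale, satisfaisant � la condition de descente locale, et contenant � la fois les carr�s comma et les carr�s de la forme
\[
\UseTwocells
\xymatrixcolsep{2.5pc}
\xymatrixrowsep{2.4pc}
\xymatrix{
&A\ar[d]_{}\ar[r]^{u}
\drtwocell<\omit>{}
&B\ar[d]\ar@<-2ex>@{}[d]_(.55){=}
\\
&e\ar[r]_{}
&e
&\hskip -30pt,
}
\]
avec $u:A\toto B$ foncteur $\W_\D$\nbd-coasph�rique. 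Il suffit donc de v�rifier que la classe $\Q$ des carr�s de $\Cat$ satisfaisant � la propri�t� de changement de base cohomologique pour $\D$ poss�de toutes ces propri�t�s, pour pouvoir conclure que $\Q$ contient la classe des carr�s $\D$\nbd-exacts.

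Or, la stabilit� de $\Q$ par composition horizontale (condition \textbf{CS\,$1_{\mathrm h}$}) est donn�e par la proposition~\ref{sorchangbase},~(\emph{ii}), et la condition de descente locale (\textbf{CS\,$3_{\mathrm h}$}) par le corollaire~\ref{desclocchangbase}, ce dernier exploitant la conjonction de la conservativit� et de la compl�tude de $\D$. L'appartenance des carr�s comma � $\Q$ r�sulte de la proposition~\ref{sorchangbase},~(\emph{iii}), � nouveau gr�ce � la compl�tude de $\D$. Enfin, l'appartenance � $\Q$ des carr�s du type repr�sent� ci-dessus, avec $u$ foncteur $\W_\D$\nbd-coasph�rique, est pr�cis�ment le contenu de l'hypoth�se que $\D$ satisfait le crit�re cohomologique pour les morphismes coasph�riques (\cf~d�finition~\ref{defcritcohcoasph}).

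Aucune difficult� s�rieuse n'est � pr�voir, puisque tout le travail technique a d�j� �t� accompli en amont: d'une part la caract�risation \og g�n�rative\fg{} de la classe des carr�s $\W$\nbd-exacts (th�or�me~\ref{carclassex1}), d'autre part l'�tablissement des propri�t�s de stabilit� et d'initialisation pour la classe $\Q$ dans les propositions pr�c�dentes de cette section. Le pr�sent th�or�me n'en est qu'une cons�quence formelle, obtenue par combinaison de ces deux ingr�dients, la seule finesse �tant de rep�rer que la quatri�me propri�t� requise par le th�or�me~\ref{carclassex1} correspond exactement � l'hypoth�se suppl�mentaire qu'on a d� imposer sur $\D$ au-del� de la conservativit� et de la compl�tude.
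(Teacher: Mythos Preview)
Your approach is essentially identical to the paper's: apply th�or�me~\ref{carclassex1} with $\W=\W_\D$, and verify that the class $\Q$ of carr�s satisfying the cohomological base change property has the four required properties. The paper's proof cites exactly the same ingredients for \textbf{CS\,$1_{\mathrm h}$}, \textbf{CS\,$3_{\mathrm h}$}, and \textbf{CI\,1d}.

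There is however one genuine slip in your citations. Th�or�me~\ref{carclassex1} requires that $\Q$ contain \emph{all} comma squares, not merely the special ones $\mathcal D^{\mathrm{\,g}}_{u,b}$ with $B'=e$. Proposition~\ref{sorchangbase},~(\emph{iii}) only provides the latter (it is just a restatement of axiom \textbf{Der\,4g}). The correct reference for arbitrary comma squares is th�or�me~\ref{derfgchangbase},~(\emph{i}), whose proof (via lemme~\ref{condsuffexcomma}) is not entirely trivial: it uses the cofibred structure of $u\comma w$ over $B'$ together with \textbf{CS\,$2_{\mathrm h}$} and \textbf{CI\,$1'$d}. Alternatively, you could bypass th�or�me~\ref{carclassex1} and invoke lemme~\ref{condsuffex1} directly, which only requires condition \textbf{CI\,2g}; in that case your citation of proposition~\ref{sorchangbase},~(\emph{iii}) would suffice. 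Either fix is minor, but as written the argument has a gap at precisely this point.
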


\begin{proof}
Le théorème est conséquence directe du théorème~\ref{carclassex1}, dont les hypothèses sont satisfaites grâce à la proposition~\ref{sorchangbase}, (\emph{ii}), au corollaire~\ref{desclocchangbase}, au théorème~\ref{derfgchangbase}, (\emph{i}), et à la définition~\ref{defcritcohcoasph}.
\end{proof}

\begin{rem}
Contrairement aux foncteurs $\D$-asphériques, les morphismes de $\Cat$ qui sont des $\D$-équivalences localement sur une petite catégorie n'admettent pas une caractérisation en termes de carrés exacts. Néanmoins, on a une généralisation de l'équivalence des conditions~(\emph{a}) et~(\emph{d}) de la proposition~\ref{carfonctDasph}:
\end{rem}

\begin{prop} \label{carfonctDeqloc}
Soient $\D$ un prédérivateur conservatif et complet, et 
\[
\xymatrixcolsep{1pc}
\xymatrix{
A\ar[rr]^{u}\ar[dr]_{v}
&&B\ar[dl]^{w}
\\
&C
}
\]
un triangle commutatif de $\Cat$. Notons $p:A\toto e$, $q:B\toto e$ les flèches vers la catégorie ponctuelle. Les conditions suivantes sont équivalentes:
\begin{GMitemize}
\item[\rm a)] $u$ est une $\D$-équivalence localement sur $C$;
\item[\rm b)] le morphisme canonique $w^{}_*q^*\toto v^{}_*p^*$ est un isomorphisme.
\end{GMitemize}
\end{prop}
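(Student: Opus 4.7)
Mon plan est de montrer que les conditions (\emph{a}) et (\emph{b}) sont toutes deux �quivalentes � la m�me condition locale sur $C$: pour tout objet $c$ de $C$, le foncteur $u_c:\cm{A}{c}\toto\cm{B}{c}$ induit par $u$ entre les cat�gories comma (relatives � $v$ et $w$ respectivement) est une $\D$\nbd-�quivalence. Pour (\emph{a}), c'est la d�finition m�me d'une $\D$\nbd-�quivalence localement sur $C$ (\cf~\ref{eqloccoloc}), compte tenu de la caract�risation rappel�e en~\ref{locfondder}: puisque $\D$ est complet, $u_c$ est une $\D$\nbd-�quivalence si et seulement si le morphisme canonique $(p_B)_*p_B^*\toto(p_A)_*p_A^*$ est un isomorphisme, o� l'on note $p_A:\cm{A}{c}\toto e$ et $p_B:\cm{B}{c}\toto e$ les foncteurs canoniques vers la cat�gorie ponctuelle.

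Pour traduire (\emph{b}), j'invoquerais d'abord l'axiome de conservativit� \textbf{Der\,2}: le morphisme $w_*q^*\toto v_*p^*$ est un isomorphisme si et seulement si, pour tout objet $c$ de $C$, son image $c^*w_*q^*\toto c^*v_*p^*$ en est un. Ensuite, notant $j_A:\cm{A}{c}\toto A$ et $j_B:\cm{B}{c}\toto B$ les fl�ches d'oubli, l'axiome \textbf{Der\,4g} appliqu� aux carr�s comma $\mathcal D^{\mathrm g}_{v,c}$ et $\mathcal D^{\mathrm g}_{w,c}$ fournit des isomorphismes canoniques $c^*v_*\simeq(p_A)_*j_A^*$ et $c^*w_*\simeq(p_B)_*j_B^*$. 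Comme $pj_A=p_A$ et $qj_B=p_B$, on a $j_A^*p^*=p_A^*$ et $j_B^*q^*=p_B^*$. La commutativit� $wu=v$ induit un foncteur $u_c:\cm{A}{c}\toto\cm{B}{c}$ v�rifiant $j_Bu_c=uj_A$ et $p_Bu_c=p_A$. L'�tape cl� sera alors d'identifier, via ces isomorphismes, le morphisme $c^*w_*q^*\toto c^*v_*p^*$ au morphisme canonique $(p_B)_*p_B^*\toto(p_A)_*p_A^*=(p_B)_*u_{c*}u_c^*p_B^*$ induit par l'unit� d'adjonction $\id{}\toto u_{c*}u_c^*$. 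Combin�e � la caract�risation rappel�e au premier paragraphe, cette identification ach�verait la preuve de l'�quivalence (\emph{a}) $\Leftrightarrow$ (\emph{b}).

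Le point d�licat sera pr�cis�ment cette derni�re v�rification de compatibilit� entre, d'une part, le morphisme global $c^*w_*q^*\toto c^*v_*p^*$ construit � partir de l'unit� de $(u^*,u_*)$ coiff�e par $w_*$ (puis restreinte par $c^*$), et d'autre part le morphisme local $(p_B)_*p_B^*\toto(p_B)_*u_{c*}u_c^*p_B^*$ construit � partir de l'unit� de $(u_c^*,u_{c*})$ coiff�e par $(p_B)_*$. Je m'attends � ce que cela r�sulte formellement d'une chasse aux diagrammes utilisant la $2$\nbd-fonctorialit� de $\D$, la naturalit� des isomorphismes de Beck-Chevalley fournis par \textbf{Der\,4g}, et la compatibilit� entre les deux unit�s d'adjonction concern�es \textemdash\ compatibilit� exprimant que les carr�s commas de $v$ et de $w$ s'organisent, via $u$ et $u_c$, en un diagramme coh�rent de carr�s de Beck-Chevalley � gauche. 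Le travail est routinier mais n�cessite de tracer soigneusement les unit�s et co-unit�s, et c'est l� que r�side l'essentiel de l'effort technique.
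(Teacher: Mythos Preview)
Your proposal is correct and follows exactly the same route as the paper's proof: reduce via \textbf{Der\,2} to each object $c$ of $C$, apply \textbf{Der\,4g} to the comma squares of $v$ and $w$ over $c$ to obtain the isomorphisms $c^*w_*q^*\simeq(p_B)_*p_B^*$ and $c^*v_*p^*\simeq(p_A)_*p_A^*$, and then identify the resulting morphism with the canonical one $(p_B)_*p_B^*\toto(p_A)_*p_A^*$ characterizing $\D$\nbd-�quivalences. The paper also flags the compatibility check you single out as the key point, and likewise leaves it to the reader.
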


\begin{proof}
En vertu de l'axiome \textbf{Der\,2}, pour que le morphisme canonique
\[
w^{}_*q^*\toto w^{}_*u{}_*u^*q^*\simeq(wu)^{}_*(qu)^*=v^{}_*p^*
\]
soit un isomorphisme, il faut et il suffit que pour tout objet $c$ de $C$, la flèche
\begin{equation} \label{coucou}
c^*w^{}_*q^*\toto c^*v^{}_*p^*
\end{equation}
le soit. Or, en vertu de l'axiome \textbf{Der\,4g}, on a des isomorphismes canoniques
\[
\UseTwocells
\xymatrixcolsep{2.5pc}
\xymatrixrowsep{2.4pc}
\xymatrix{
\cm{B}{c}\ar[d]_{t}\ar[r]^{l}
\drtwocell<\omit>{}
&B\ar[d]^{w}
\\
e\ar[r]_{c}\ar@{}@<-4ex>[r]_{\textstyle c^*w^{}_*q^*\simeq t^{}_*l^*q^*=t^{}_*t^*\smsp,}
&C
}
\hskip 30pt
\xymatrix{
\cm{A}{c}\ar[d]_{s}\ar[r]^{k}
\drtwocell<\omit>{}
&A\ar[d]^{v}
\\
e\ar[r]_{c}\ar@{}@<-4ex>[r]_{\textstyle c^*v^{}_*p^*\simeq s^{}_*k^*p^*=s^{}_*s^*\smsp.}
&C
}
\]
On laisse au lecteur le soin de vérifier que ces isomorphismes identifient la flèche~\ref{coucou} au morphisme canonique $t^{}_*t^*\toto s^{}_*s^*$. Comme ce dernier est un isomorphisme si et seulement si le foncteur $\cm{A}{c}\toto\cm{B}{c}$ est une $\D$\nbd-équivalence (\cf~\ref{locfondder}), cela prouve la proposition.
\end{proof}

\begin{rem} \label{remcritcohcoloceq}
Dualement, si $\D$ est un prédérivateur conservatif et cocomplet, et
\[
\xymatrixcolsep{1pc}
\xymatrix{
A\ar[rr]^{u}\ar[dr]_{v}
&&B\ar[dl]^{w}
\\
&C
}
\]
un triangle commutatif dans $\Cat$, et si on note toujours $p:A\toto e$ et $q:B\toto e$ les flèches vers la catégorie ponctuelle, le foncteur $u$ est une $\D$\nbd-équivalence colocalement sur $C$ si et seulement si le morphisme canonique $v^{}_!p^*\toto w^{}_!q^*$ est un isomorphisme. Si de plus le prédérivateur $\D$ admet aussi des images directes cohomologiques (en particulier si $\D$ est un dérivateur), cela équivaut à demander que le morphisme transposé $q^{}_*w^*\toto p^{}_*v^*$ soit un isomorphisme. On est ainsi conduit à poser la définition suivante.
\end{rem}

\begin{df} \label{derpasfaible}
Soit $\D$ un prédérivateur admettant des images directes cohomologiques. On dit que $\D$ \emph{satisfait le critère cohomologique pour les équivalences colocales} si pour tout triangle commutatif dans $\Cat$
\[
\xymatrixcolsep{1pc}
\xymatrix{
A\ar[rr]^{u}\ar[dr]_{v}
&&B\ar[dl]^{w}
\\
&C
}
\]
tel que $u$ soit une $\D$\nbd-équivalence colocalement sur $C$, le morphisme canonique $q^{}_*w^*\toto p^{}_*v^*$, où $p:A\toto e$ et $q:B\toto e$ désignent les flèches vers la catégorie ponctuelle, est un isomorphisme. De façon équivalente, cette condition signifie que pour tout objet $X$ de $\D(e)$, et tout objet $Y$ de $\D(C)$, l'application 
\[
\Hom^{}_{\D(B)}(q^*(X),w^*(Y))\Toto{2}{}\Hom^{}_{\D(A)}(p^*(X),v^*(Y))\smsp,
\]
induite par $u^*$, est bijective. Un \emph{pseudo-dérivateur à gauche} est un pseudo-dérivateur faible à gauche satisfaisant le critère cohomologique pour les équivalences colocales, autrement dit, un prédérivateur conservatif et complet satisfaisant ce critère. Un \emph{dérivateur à gauche} est un pseudo-dérivateur à gauche satisfaisant de plus l'axiome de normalisation \textbf{Der\,1}, autrement dit, un dérivateur faible à gauche satisfaisant le critère cohomologique pour les équivalences colocales.
\end{df}

\begin{rem}
Soit $\D$ un prédérivateur admettant des images directes cohomologiques. Si $\D$ satisfait le critère cohomologique pour les équivalences colocales, il satisfait aussi, tautologiquement, le critère cohomologique pour les morphismes coasphériques. L'appellation de \og critère\fg{} pour ces deux conditions est justifiée par la proposition suivante.
\end{rem}

\begin{prop}
Soit $\D$ un prédérivateur conservatif et complet, et
\[
\xymatrixcolsep{1pc}
\xymatrix{
A\ar[rr]^{u}\ar[dr]_{v}
&&B\ar[dl]^{w}
\\
&C
}
\]
un triangle commutatif dans $\Cat$. Si le morphisme canonique $q^{}_*w^*\toto p^{}_*v^*$, où \hbox{$p:A\toto e$} et $q:B\toto e$ désignent les flèches vers la catégorie ponctuelle, est un isomorphisme, alors $u$ est une $\D$\nbd-équivalence colocalement sur $C$.
\end{prop}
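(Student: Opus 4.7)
The plan is to show that for each object $c$ of $C$, the functor $\bar u:\mc{c}{A}\toto\mc{c}{B}$ induced by $u$ is a $\D$\nbd-�quivalence. Denote by $\tilde p:\mc{c}{A}\toto e$ and $\tilde q:\mc{c}{B}\toto e$ the projections to the terminal category, and by $k:\mc{c}{A}\toto A$, $l:\mc{c}{B}\toto B$ the canonical forgetful functors, so that $\tilde p=pk=\tilde q\bar u$ and $\tilde q=ql$. By the characterization of $\D$\nbd-�quivalences for a complete pr�d�rivateur (\cf~\ref{locfondder}), together with the axiom \textbf{Der\,2}, it suffices to show that the canonical morphism $\tilde q_*\tilde q^*\toto\tilde p_*\tilde p^*$ of functors $\D(e)\toto\D(e)$ is an isomorphism.

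The key step is to apply the hypothesis to the object $c_*Z\in\D(B)$ wait, $c_*Z\in\D(C)$, for $Z$ an arbitrary object of $\D(e)$ (this uses that $\D$ is complete, so $c_*$ exists), which yields an isomorphism $q_*w^*c_*Z\toto p_*v^*c_*Z$. I would then invoke Theorem~\ref{derfgchangbase},~(\emph{i}): the comma squares $\mathcal{D}^{\mathrm{\,d}}_{v,c}$ and $\mathcal{D}^{\mathrm{\,d}}_{w,c}$ both satisfy the cohomological change of base property, providing canonical isomorphisms $v^*c_*\simeq k_*\tilde p^*$ and $w^*c_*\simeq l_*\tilde q^*$. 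Composing with $p_*$ and $q_*$, and using that $p_*k_*=\tilde p_*$ and $q_*l_*=\tilde q_*$ (since the right adjoint of a composite is the composite of the right adjoints, when they exist), the two sides of the hypothesis isomorphism can be identified respectively with $\tilde q_*\tilde q^*Z$ and $\tilde p_*\tilde p^*Z$.

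The main point to verify is that the morphism $\tilde q_*\tilde q^*Z\toto\tilde p_*\tilde p^*Z$ produced by this transport coincides with the canonical comparison morphism $\tilde q_*\star\eta_{\bar u}\star\tilde q^*Z$ associated to the composition $\tilde p=\tilde q\bar u$. I expect this compatibility check to be the principal technical obstacle, although essentially formal: it should follow from the observation that $\mathcal{D}^{\mathrm{\,d}}_{v,c}$ decomposes as the horizontal composite $\mathcal{D}^{\mathrm{\,d}}_{w,c}\circh\mathcal{E}$, where $\mathcal{E}$ denotes the commutative cartesian square with top $\bar u$, bottom $u$, left side $k$ and right side $l$ (with trivial $2$\nbd-cellule). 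By the compatibility of change of base morphisms with horizontal composition (see the formula in the proof of Proposition~\ref{descchangbase}), the BC isomorphism attached to $\mathcal{D}^{\mathrm{\,d}}_{v,c}$ factorizes through the analogous morphism $u^*l_*\toto k_*\bar u^*$ for~$\mathcal{E}$, which is thereby forced to be an isomorphism; a diagram chase then identifies the resulting transported map with $\tilde q_*\star\eta_{\bar u}\star\tilde q^*Z$. Granted this, the hypothesis furnishes the desired isomorphism for every $Z\in\D(e)$, so $\bar u$ is a $\D$\nbd-�quivalence; since $c$ was arbitrary, $u$ is a $\D$\nbd-�quivalence colocalement sur $C$.
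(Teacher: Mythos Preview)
Your argument is essentially identical to the paper's: fix $c\in C$, apply the hypothesis to $c_*$, use the cohomological base change isomorphisms for the two comma squares $\mathcal{D}^{\mathrm{\,d}}_{v,c}$ and $\mathcal{D}^{\mathrm{\,d}}_{w,c}$ (th\'eor\`eme~\ref{derfgchangbase},~(\emph{i})) to identify both sides with $\tilde q_*\tilde q^*$ and $\tilde p_*\tilde p^*$, and then check that the transported map is the canonical comparison---exactly the verification the paper leaves to the reader. One small caution: your factorisation argument only forces $c_{\mathcal E}\star\tilde q^*$ to be an isomorphism, not $c_{\mathcal E}$ itself; this is harmless since the identification of morphisms you need does not require $c_{\mathcal E}$ to be invertible (and in any case $c_{\mathcal E}$ \emph{is} an isomorphism by th\'eor\`eme~\ref{derfgchangbase},~(\emph{ii}), as $l$ is a pr\'ecofibration).
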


\begin{proof}
Supposons que le morphisme canonique $q^{}_*w^*\toto p^{}_*v^*$ soit un isomorphisme. En vertu du théorème~\ref{derfgchangbase}, (\emph{i}), pour tout objet $c$ de $C$, les carrés comma
\[
\UseTwocells
\xymatrixcolsep{2.5pc}
\xymatrixrowsep{2.4pc}
\xymatrix{
\mc{c}{A}\ar[d]_{i}\ar[r]^-{s}
\drtwocell<\omit>{}
&e\ar[d]^{c}\ar@{}@<9ex>[d]^(.47){\hbox{et}}
\\
A\ar[r]_{v}
&C
}
\hskip 28pt
\xymatrix{
\mc{c}{B}\ar[d]_{j}\ar[r]^-{t}
\drtwocell<\omit>{}
&e\ar[d]^{c}
\\
B\ar[r]_{w}
&C
}
\]
satisfont à la propriété de changement de base cohomologique. On en déduit un diagramme commutatif d'isomorphismes
\[
\xymatrixrowsep{1.3pc}
\xymatrixcolsep{3.8pc}
\xymatrix{
&q^{}_*w^*c^{}_*\ar[r]^{\sim}\ar[d]^{\wr}
&p^{}_*v^*c^{}_*\ar[d]^{\wr}
\\
&q^{}_*j^{}_*t^*\ar[r]\ar[d]^{\wr}
&p^{}_*i^{}_*s^*\ar[d]^{\wr}
\\
&t^{}_*t^*\ar[r]
&s^{}_*s^*
&\hskip -60pt.
}
\]
On laisse au lecteur le soin de vérifier que la flèche horizontale du bas n'est autre que le morphisme canonique. Comme elle est un isomorphisme, on en déduit que le foncteur $\mc{c}{A}\toto\mc{c}{B}$ est une $\D$\nbd-équivalence (\cf~\ref{locfondder}), ce qui achève la démonstration.
\end{proof}

\begin{ex} \label{prederprefcatcmpl}
Soient $\C$ une catégorie, et $\D=\D_{\C}$ le prédérivateur des préfaisceaux à valeurs dans $\C$ (\cf~\ref{derpref}). Si la catégorie $\C$ est complète, alors $\D$ est un dérivateur à gauche. En effet, comme on a déjà vu que $\D$ est un dérivateur faible à gauche (\emph{loc. cit.}), il suffit de vérifier que $\D$ satisfait le critère cohomologique des équivalences colocales. Soit donc
\[
\xymatrixcolsep{1pc}
\xymatrix{
A\ar[rr]^{u}\ar[dr]_{v}
&&B\ar[dl]^{w}
\\
&C
}
\]
un triangle commutatif dans $\Cat$ tel que $u$ soit une $\D$-équivalence colocalement sur~$C$, et notons $p:A\toto e$, $q:B\toto e$ les foncteurs vers la catégorie ponctuelle. On veut montrer que la flèche canonique $q^{}_*w^*\toto p^{}_*v^*$ est un isomorphisme, ou de façon équivalente que pour tout objet $X$ de $\C$, et tout préfaisceau $Y$ sur $C$ à valeurs dans $\C$, l'application
\[
\Hom^{}_{\D(B)}(q^*(X),w^*(Y))\Toto{1.3}{}\Hom^{}_{\D(A)}(p^*(X),v^*(Y))\smsp,\quad\varphi\mapstoto u^*(\varphi)\smsp,
\]
est bijective. On va montrer que cette application est bijective pour $\C$ une catégorie \emph{arbitraire}, sans utiliser l'hypothèse qu'elle soit complète. On observe d'abord que dans le cas où elle est complète \emph{et} cocomplète, cela résulte des considérations de la remarque~\ref{remcritcohcoloceq}, ce qui en particulier règle le cas où $\C$ est équivalente à la catégorie ponctuelle. Ensuite, on montre qu'il existe un foncteur pleinement fidèle $i:\C\toto\C'$, avec $\C'$ catégorie complète et cocomplète telle que si $\D'$ désigne le dérivateur des préfaisceaux à valeurs dans $\C'$, on ait $\W_{\D'}=\W_{\D}$. Si $\C$ n'est pas un ensemble préordonné, on peut prendre pour $\C'$ la catégorie des préfaisceaux d'ensembles sur $\C$, et pour $i$ le plongement de Yoneda, puisque alors $\W_{\D'}=\W_{\D}=\Wzer$ (\cf~\ref{exlocfondder}). Si $\C$ est une catégorie, non vide et non équivalente à la catégorie ponctuelle, associée à un ensemble préordonné, alors on peut prendre pour $\C'$ la catégorie des préfaisceaux à valeurs dans la catégorie $\{0\toto1\}$, considérée comme sous-catégorie pleine de celle des ensembles, formée de l'ensemble vide et d'un ensemble ponctuel, et pour $i$ le foncteur induit par le plongement de Yoneda, puisque alors $\W_{\D'}=\W_{\D}=\Wgr$ (\cf~\ref{exlocfondder}). En observant alors que le morphisme $u$ est aussi une $\D'$\nbd-équivalence colocalement sur $C$, et en notant pour toute petite catégorie $K$, $i^{}_K:\D(K)\toto\D'(K)$ le foncteur pleinement fidèle induit par~$i$, on conclut en considérant le carré commutatif
\[
\xymatrixcolsep{3.5pc}
\xymatrixrowsep{.5pc}
\xymatrix{
\Hom^{}_{\D(B)}(q^*(X),w^*(Y))\ar[dd]_{\wr}\ar[r]
&\Hom^{}_{\D(A)}(p^*(X),v^*(Y))\ar[dd]^{\wr}
\\
\\
\Hom^{}_{\D(B)}(i^{}_Bq^*(X),i^{}_Bw^*(Y))\ar@{=}[d]
&\Hom^{}_{\D(A)}(i^{}_Ap^*(X),i^{}_Av^*(Y))\ar@{=}[d]
\\
\Hom^{}_{\D(B)}(q^*i^{}_e(X),w^*i^{}_C(Y))\ar[r]_{\sim}
&\Hom^{}_{\D(A)}(p^*i^{}_e(X),v^*i^{}_C(Y))\hskip 10pt,\hskip -15pt
}
\]
dont les flèches verticales, ainsi que la flèche horizontale du bas, sont des bijections.
\end{ex}

\begin{thm} \label{carcarexder} 
Soit $\D$ un prédérivateur conservatif et complet, satisfaisant le critère cohomologique pour les équivalences colocales \emph{(par exemple un dérivateur).} Alors un carré de $\Cat$ satisfait à la propriété de changement de base cohomologique si et seulement s'il est $\D$\nbd-exact faible.
\end{thm}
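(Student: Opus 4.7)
Le plan est de ramener chacune des deux propri�t�s --- changement de base et $\D$\nbd-exactitude faible --- � une m�me condition, locale en chaque objet de~$B'$, puis d'invoquer le crit�re cohomologique pour les �quivalences colocales afin de les mettre en regard.

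La premi�re �tape consistera � appliquer la conservativit� \textbf{Der\,2} � la cat�gorie~$B'$, ce qui r�duit l'isomorphie du morphisme de changement de base $c^{}_{\mathcal D}:w^*u^{}_*\toto u'_*v^*$ � la famille de conditions: pour tout $b'\in\Ob\,B'$, le morphisme $b'^*c^{}_{\mathcal D}$ est un isomorphisme. Fixant un tel $b'$, notons $\phi:\cm{A'}{b'}\toto\cm{A}{w(b')}$ le foncteur induit par~$v$ (\cf~\ref{fonctindcar}), $j:\cm{A}{w(b')}\toto A$ et $k:\cm{A'}{b'}\toto A'$ les projections canoniques, et $r:\cm{A}{w(b')}\toto e$, $s:\cm{A'}{b'}\toto e$ les foncteurs vers la cat�gorie ponctuelle; on a alors $j\phi=vk$ et $s=r\phi$. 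En utilisant \textbf{Der\,4g} appliqu� aux carr�s comma $\mathcal D^{\mathrm{\,g}}_{u,w(b')}$ et $\mathcal D^{\mathrm{\,g}}_{u',b'}$ (dont la propri�t� de changement de base cohomologique est acquise par le th�or�me~\ref{derfgchangbase},~(\emph{i})), les deux membres de $b'^*c^{}_{\mathcal D}$ s'identifieront respectivement aux foncteurs $r^{}_*j^*$ et $s^{}_*\phi^*j^*$.

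La deuxi�me �tape --- principal obstacle pr�visible --- sera la v�rification de coh�rence que ces identifications transforment $b'^*c^{}_{\mathcal D}$ en la transformation naturelle canonique
\[
r^{}_*\,j^*\Toto{2}{}s^{}_*\,\phi^*\,j^*
\]
entre foncteurs $\D(A)\toto\D(e)$ associ�e au triangle commutatif $\cm{A'}{b'}\Toto{1.3}{\phi}\cm{A}{w(b')}\Toto{1.3}{j}A$ vu au-dessus de~$A$. Il s'agira d'un diagramme de coh�rence traversant les unit�s et co-unit�s d'adjonction, mettant en jeu les $2$\nbd-cellules $\alpha$ du carr� $\mathcal D$ ainsi que $\alpha^{\mathrm{\,g}}_{u,w(b')}$ et $\alpha^{\mathrm{\,g}}_{u',b'}$ des carr�s comma; conceptuellement direct, ce calcul demande du soin pour mettre en correspondance tous les isomorphismes canoniques.

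Une fois cette identification acquise, la troisi�me �tape conjuguera la proposition non num�rot�e qui suit la d�finition~\ref{derpasfaible} --- affirmant, sans aucune hypoth�se de crit�re, que l'isomorphie d'un tel morphisme canonique implique que $\phi$ est une $\D$\nbd-�quivalence colocalement sur~$A$ --- avec le crit�re cohomologique pour les �quivalences colocales satisfait par~$\D$, qui fournit l'implication r�ciproque. On obtiendra ainsi l'�quivalence: $b'^*c^{}_{\mathcal D}$ est un isomorphisme si et seulement si $\phi$ est une $\D$\nbd-�quivalence colocalement sur~$A$. La conjonction sur tous les $b'\in\Ob\,B'$ et la comparaison avec le crit�re~(\emph{a}) de la proposition~\ref{carcarexf} donneront alors la conclusion: $\mathcal D$ satisfait � la propri�t� de changement de base cohomologique si et seulement s'il est $\D$\nbd-exact faible.
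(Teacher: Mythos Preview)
Your proposal is correct and follows essentially the same approach as the paper's own proof: reduce via \textbf{Der\,2} to each fibre $b'$, identify both sides of $b'^*c_{\mathcal D}$ through the comma-square base change isomorphisms (the paper invokes \textbf{Der\,4g} directly rather than the more general th�or�me~\ref{derfgchangbase}, but this makes no difference here), recognise the resulting morphism as the canonical one attached to the triangle $\cm{A'}{b'}\to\cm{A}{w(b')}\to A$, and conclude via the crit�re cohomologique together with the converse proposition. One small remark: the proposition you cite after d�finition~\ref{derpasfaible} is in fact numbered in the paper; and your explicit separation of the two directions (crit�re for one, proposition for the other) is slightly more careful than the paper, which bundles them into a single ``si et seulement si''.
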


\begin{proof}
Soit
\[
\UseTwocells
\xymatrixcolsep{2.5pc}
\xymatrixrowsep{2.4pc}
\xymatrix{
A'\ar[d]_{u'}\ar[r]^{v}
\drtwocell<\omit>{\alpha}
&A\ar[d]^{u}
\\
B'\ar[r]_{w}
&B
}
\] 
un carré dans $\Cat$, et considérons le morphisme de changement de base $w^*u^{}_*\toto u'_*v^*$.
En vertu de \textbf{Der\,2}, pour qu'il soit un isomorphisme, il faut et il suffit que pour tout objet $b'$ de $B'$, la flèche 
\begin{equation} \label{glouglou}
{b'}^*w^*u^{}_*\toto {b'}^*u'_*v^*
\end{equation}
soit un isomorphisme. Or, en vertu de \textbf{Der\,4g}, on a des isomorphismes canoniques
\[
\UseTwocells
\xymatrixcolsep{2.5pc}
\xymatrixrowsep{2.4pc}
\xymatrix{
\cm{A}{w(b')}\ar[d]_{r}\ar[r]^-{j}
\drtwocell<\omit>{}
&A\ar[d]^{u}
\\
e\ar[r]_{w(b')}\ar@{}@<-4ex>[r]_{\textstyle {b'}^*w^*u^{}_*=w(b')^*u^{}_*\simeq r^{}_*j^*\smsp,}
&B
}
\hskip 30pt
\xymatrix{
\cm{A'}{b'}\ar[d]_{r'}\ar[r]^-{j'}
\drtwocell<\omit>{}
&A'\ar[d]^{u'}
\\
e\ar[r]_{b'}\ar@{}@<-4ex>[r]_{\textstyle {b'}^*u'_*v^*\simeq r'_*{j'}^*v^*=r'_*(vj')^*\smsp,}
&B'
}
\]
identifiant le morphisme~\ref{glouglou} à une flèche
\begin{equation} \label{guili}
r^{}_*j^*\toto r'_*(vj')^*\smsp.
\end{equation}
D'autre part, on a un carré commutatif
\[
\xymatrixcolsep{2.6pc}
\xymatrix{
&\cm{A'}{b'}\ar[d]_{j'}\ar[r]^{v'}
&\cm{A}{w(b')}\ar[d]^j
\\
&A'\ar[r]_v
&A
&\hskip -50pt,
}
\]
où $v'$ désigne le morphisme induit par $v$. On laisse au lecteur le soin de vérifier que le morphisme~\ref{guili} n'est autre que le morphisme canonique
\[
r^{}_*j^*\toto r^{}_*v'_*{v'}^*j^*\simeq(rv')^{}_*(jv')^*=r'_*(vj')^*\smsp,
\]
lequel, en vertu de l'hypothèse que $\D$ satisfait le critère cohomologique des équi\-va\-lences colocales, est un isomorphisme si et seulement si $v'$ est une $\D$\nbd-équivalence colocalement sur $A$. Le critère~(\emph{a}) de la proposition~\ref{carcarexf} permet alors de conclure. 
\end{proof}

\begin{rem} \label{excarBeckChev}
Soit
\[
\mathcal{D}\ =\quad
\raise 23pt
\vbox{
\UseTwocells
\xymatrixcolsep{2.5pc}
\xymatrixrowsep{2.4pc}
\xymatrix{
A'\ar[d]_{u'}\ar[r]^{v}
\drtwocell<\omit>{\alpha}
&A\ar[d]^{u}
\\
B'\ar[r]_{w}
&B
}
}\qquad
\]
un carré dans $\Cat$ tel que $u$ et $u'$ admettent des adjoints à droite $r$ et $r'$ respectivement, et $c:vr'\toto rw$ le morphisme \og de changement de base\fg. Pour tout prédérivateur $\D$, la $2$\nbd-fonctorialité implique que $(u^*,r^*)$ et $(u'{}^*,r'{}^*)$ sont des couples de foncteurs adjoints, de sorte que $u$ et $u'$ admettent des images directes cohomologiques $u^{}_*\simeq r^*$ et $u'_*\simeq r'{}^*$ respectivement, le morphisme de changement de base $w^*u^{}_*\toto u'_*v^*$ s'identifiant à $c^*:w^*r^*\toto r'{}^*v^*$. Si $\D$ est le dérivateur des préfaisceaux d'ensembles, et si le carré $\mathcal D$ est $\Wzer$\nbd-exact, il résulte donc du théorème précédent (\cf~exemple~\ref{exlocfondder}) que le morphisme $c^*$ est un isomorphisme. Le plongement de Yoneda étant un foncteur conservatif, on en déduit que le morphisme $c$ est lui-même un isomorphisme, autrement dit, que le carré $\mathcal D$ est de Beck-Chevalley à gauche, ce qui fournit une nouvelle preuve de l'implication \hbox{(\emph{c}) $\Rightarrow$ (\emph{a})} de la proposition~\ref{carcarBeckChev}. Le lecteur vérifiera l'absence de cercle vicieux.
\end{rem}

\begin{cor}
Soit
\[
\mathcal{D}\ =\quad
\raise 23pt
\vbox{
\UseTwocells
\xymatrixcolsep{2.5pc}
\xymatrixrowsep{2.4pc}
\xymatrix{
A'\ar[d]_{u'}\ar[r]^{v}
\drtwocell<\omit>{\alpha}
&A\ar[d]^{u}
\\
B'\ar[r]_{w}
&B
}
}\qquad
\]
un carré dans $\Cat$. Les conditions suivantes sont équivalentes:
\begin{GMitemize}
\item[\rm a)] $\mathcal D$ est un carré exact de Guitart, autrement dit un carré $\Wzer$\nbd-exact;
\item[\rm b)] le carré
\[
\pref{\mathcal{D}}\ =\quad
\raise 25pt
\vbox{
\UseTwocells
\xymatrixcolsep{2.5pc}
\xymatrixrowsep{2.4pc}
\xymatrix{
\pref{B}\ar[d]_{u^*}\ar[r]^{w^*}
\drtwocell<\omit>{\alpha^*}
&\pref{B}'\ar[d]^{u'{}^*}
\\
\pref{A}\ar[r]_{v^*}
&\pref{A}'
}
}\qquad
\]
\emph{(où pour toute petite catégorie $C$, $\pref{C}$ désigne la catégorie des préfaisceaux d'ensembles sur $C$)} est un carré exact de Guitart;
\item[\rm c)] pour toute catégorie complète ou cocomplète $\C$, le carré
\[
\Homint{(\op{\mathcal{D}},\C)}\ =\quad
\raise 28pt
\vbox{
\UseTwocells
\xymatrixcolsep{2.5pc}
\xymatrixrowsep{3.4pc}
\xymatrix{
\Homint{(\op{B},\C)}\ar[d]_{u^*}\ar[r]^{w^*}
\drtwocell<\omit>{\alpha^*}
&\Homint(\op{B'},\C)\ar[d]^{u'{}^*}
\\
\Homint{(\op{A},\C)}\ar[r]_{v^*}
&\Homint(\op{A'},\C)
}
}\qquad
\]
\emph{(où pour toute petite catégorie $C$, $\Homint{(\op{C},\C)}$ désigne la catégorie des préfaisceaux sur $C$ à valeurs dans $\C$)} est un carré exact de Guitart;
\item[\rm d)] il existe une catégorie complète ou cocomplète $\C$, qui ne soit pas un ensemble préordonné, et telle que $\Homint{(\op{\mathcal{D}},\C)}$ soit un carré exact de Guitart.
\end{GMitemize}  
\end{cor}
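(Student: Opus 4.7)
The plan is to establish the cycle (a) $\Rightarrow$ (c) $\Rightarrow$ (b) $\Rightarrow$ (d) $\Rightarrow$ (a). The engine for the whole argument is Theorem~\ref{carcarexder} (together with its dual) applied to the pr\'ed\'erivateur $\D_{\C}$ of pr\'efaisceaux with values in $\C$ (\cf~\ref{derpref}), combined with the computation of $\W_{\D_{\C}}$ given in Example~\ref{exlocfondder}. The key observation preceding the argument is that, for any $\C$ (complete or cocomplete), saying that $\Homint(\op{\mathcal D},\C)$ is a carr\'e exact de Guitart amounts precisely to saying that the mate of the 2\nbd-cell $\Homint(\op{\mathcal D},\C)$, \emph{i.e.\ }the appropriate base change morphism, is an isomorphism, which by definition is the property of changement de base cohomologique (resp.~homologique) of the carr\'e $\mathcal D$ for the pr\'ed\'erivateur $\D_{\C}$.

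For (a) $\Rightarrow$ (c), I start from $\mathcal D$ being $\Wzer$-exact. Example~\ref{exlocfondder} ensures that $\W_{\D_{\C}}\in\{\Wzer,\Wgr,\Wtr\}$, hence $\Wzer\subset\W_{\D_{\C}}$ in all cases, so by Proposition~\ref{changloccarex} the carr\'e $\mathcal D$ is $\W_{\D_{\C}}$-exact, and \emph{a fortiori} $\W_{\D_{\C}}$-exact faible. If $\C$ is compl\`ete, Example~\ref{prederprefcatcmpl} (or rather its statement, which asserts $\D_{\C}$ is a d\'erivateur \`a gauche) supplies the hypotheses of Theorem~\ref{carcarexder}, which gives that $\Homint(\op{\mathcal D},\C)$ satisfies changement de base cohomologique. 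If $\C$ is cocompl\`ete, I invoke the dual version applied to $\op{\C}$ (equivalently, the dual of Theorem~\ref{carcarexder} for the right d\'erivateur~$\D_{\C}$), which yields changement de base homologique. Either way, $\Homint(\op{\mathcal D},\C)$ is a carr\'e exact de Guitart.

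The implication (c) $\Rightarrow$ (b) is immediate by specializing $\C=\Ens$, since $\Ens$ is both compl\`ete and cocompl\`ete and $\pref{X}=\Homint(\op{X},\Ens)$. Similarly (b) $\Rightarrow$ (d) is automatic since $\Ens$ is not associated to an ensemble pr\'eordonn\'e.

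The only non\nbd-tautological remaining step is (d) $\Rightarrow$ (a), and this is where the hypotheses on $\C$ matter critically. Given $\C$ compl\`ete or cocompl\`ete, not associated to an ensemble pr\'eordonn\'e, such that $\Homint(\op{\mathcal D},\C)$ is a carr\'e exact de Guitart, Theorem~\ref{carcarexder} (or its dual) produces the conclusion that $\mathcal D$ is $\W_{\D_{\C}}$-exact faible. By the first case of Example~\ref{exlocfondder}, the hypothesis that $\C$ is not a pr\'eordonn\'e gives $\W_{\D_{\C}}=\Wzer$. Finally, since $\Wzer\neq\Wgr$, Proposition~\ref{carcarexf} ensures that $\Wzer$-exact faible coincides with $\Wzer$-exact, so $\mathcal D$ is exact au sens de Guitart. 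The main point to verify carefully is the identification, in each of the complete/cocomplete cases, of the mate of $\Homint(\op{\mathcal D},\C)$ with the appropriate Beck\nbd-Chevalley morphism for the pr\'ed\'erivateur $\D_{\C}$, so that Theorem~\ref{carcarexder} and its dual apply on the nose.
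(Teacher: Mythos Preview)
Your proposal is correct and follows essentially the same route as the paper: the cycle (a) $\Rightarrow$ (c) $\Rightarrow$ (b) $\Rightarrow$ (d) $\Rightarrow$ (a), with Theorem~\ref{carcarexder} (and its dual) applied to $\D_{\C}$, together with Example~\ref{exlocfondder}, Example~\ref{prederprefcatcmpl}, and Proposition~\ref{carcarexf}. The one ingredient you use without naming it is your ``key observation'' that the square $\Homint(\op{\mathcal D},\C)$ is exact au sens de Guitart if and only if the Beck--Chevalley mate is an isomorphism: this is not definitional but is precisely Proposition~\ref{carcarBeckChev} (the equivalence of (a) and (c) there, valid because $u^*$ and $u'{}^*$ admit the relevant adjoints when $\C$ is complete, resp.\ cocomplete), which the paper cites explicitly.
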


\begin{proof}
Les implications (\emph{c}) $\Rightarrow$ (\emph{b}) et (\emph{b}) $\Rightarrow$ (\emph{d}) sont tautologiques. En vertu des exemples~\ref{exlocfondder} et~\ref{prederprefcatcmpl}, et des propositions~\ref{carcarBeckChev} et~\ref{carcarexf}, les implications  \hbox{(\emph{a}) $\Rightarrow$ (\emph{c})} et (\emph{d}) $\Rightarrow$ (\emph{a}) résultent, dans le cas complet, du théorème~\ref{carcarexder}. Dans le cas cocomplet, elles se démontrent de façon duale.
\end{proof}

\begin{rem}
En gardant les hypothèses et les notations du corollaire précédent, si $\C$ n'est pas une catégorie complète ou cocomplète, il n'est pas vrai en général que pour un carré $\Wzer$\nbd-exact $\mathcal D$, le carré $\Homint(\op{\mathcal D},\C)$ soit exact au sens de Guitart, et cela même si l'on suppose que $\mathcal D$ est un carré comma. Voici un contre-exemple: Soient $A$ et $B$ deux petites catégories, et considérons le carré 
\[
{\mathcal{D}}\ =\quad
\raise 23pt
\vbox{
\UseTwocells
\xymatrixcolsep{2.5pc}
\xymatrixrowsep{2.4pc}
\xymatrix{
A\times B\ar[d]_{pr^{}_1}\ar[r]^-{pr^{}_2}
\drtwocell<\omit>{}
&B\ar[d]^{}\ar@<-2.7ex>@{}[d]_(.55){=}
\\
A\ar[r]_{}
&e\hskip 15pt,\hskip -18pt
}
}
\]
qui est à la fois un carré cartésien et un carré comma. Pour une catégorie $\C$, dire que le carré 
\[
\Homint{(\op{\mathcal{D}},\C)}\ =\hskip 20pt
\raise 28pt
\vbox{
\UseTwocells
\xymatrixcolsep{2.5pc}
\xymatrixrowsep{3.4pc}
\xymatrix{
\hskip -19pt\C\simeq\Homint{(\op{e},\C)}\ar[d]_{}\ar[r]^{}
\drtwocell<\omit>{}
&\Homint(\op{A},\C)\ar[d]^{pr_1^*}\ar@<-8.3ex>@{}[d]_(.55){=}
\\
\Homint{(\op{B},\C)}\ar[r]_-{pr_2^*}
&\Homint(\op{A}\times\op{B},\C)
}
}\qquad
\]
est un carré exact de Guitart, revient à demander, en vertu du critère (\emph{c}) de la proposition~\ref{carcarex}, que pour tous préfaisceaux $F$ sur $A$ et $G$ sur $B$, à valeurs dans $\C$, et tout morphisme $\varphi:pr^{*}_1(F)\toto pr^{*}_2(G)$ de préfaisceaux sur $A\times B$, la catégorie $\C_{F,G,\varphi}$ (dont les objets sont les triplets $(c,\beta,\gamma)$, avec $c$ objet de $C$, $\beta$ morphisme de pré\-fai\-sceaux de $F$ vers le préfaisceau constant sur $A$, de valeur $c$, et $\gamma$ morphisme du pré\-fai\-sceau constant sur $B$, de valeur $c$, vers le préfaisceau $G$, tels que \hbox{$pr_2^*(\gamma)pr_1^*(\beta)=\varphi$}, un morphisme de $(c,\beta,\gamma)$ vers $(c',\beta',\gamma')$ étant une flèche $f:c\toto c'$ telle que $\beta'=f\beta$ et $\gamma=\gamma'f\kern 0pt$) est 0\nbd-connexe. Or, si $A=\{a_0,a_1\}$ et $B=\{b_0,b_1\}$ sont des catégories discrètes à deux objets, $\C$ la catégorie
\[
\C=\left\{
\raise 19pt
\vbox{
\xymatrixcolsep{2.9pc}
\xymatrixrowsep{1.3pc}
\xymatrix{
a_0\ar[r]\ar[rd]
&b_0
\\
a_1\ar[r]\ar[ru]
&b_1
}
}
\right\}\quad,
\]
$F:\op{A}=A\toto\C$ et $G:\op{B}=B\toto\C$ les inclusions évidentes, et $\varphi$ l'unique morphisme de préfaisceaux sur $A\times B$ de $pr^{*}_1(F)$ vers $pr^{*}_2(G)$, alors on vérifie aussitôt que la catégorie $\C_{F,G,\varphi}$ est vide, et donc n'est pas 0\nbd-connexe. On en déduit que dans ce cas  $\Homint{(\op{\mathcal{D}},\C)}$ n'est \emph{pas} un carré exact de Guitart (et même pas un carré $\Wgr$\nbd-exact).
\end{rem}

\backmatter

\bibliographystyle{smfplain}

\begin{thebibliography}{10}

\bibitem{SGA4}
{\scshape M.~Artin{, A. Grothendieck, J.-L. Verdier}} -- \emph{Th\'eorie des
  topos et cohomologie \'etale des sch\'emas \emph{(SGA4)}}, Lecture Notes in
  Mathematics, Vol. 269, 270, 305, Springer-Verlag, 1972-1973.

\bibitem{CiDer}
{\scshape D.-C. Cisinski} -- {\og Images directes cohomologiques dans les
  cat\'egories de mod\`eles\fg}, \emph{Annales Math\'ematiques Blaise Pascal}
  \textbf{10} (2003), p.~195--244.

\bibitem{Ci2}
\bysame , {\og Le localisateur fondamental minimal\fg}, \emph{Cahiers Topologie
  G\'eom. Dif\-f\'e\-ren\-tielle Cat\'eg.} \textbf{XLV (2)} (2004),
  p.~109--140.

\bibitem{CiAst}
\bysame , \emph{Les pr\'efaisceaux comme mod\`eles des types d'homotopie},
  Ast\'erisque, Vol. 308, Soc. Math. France, 2006.

\bibitem{CiCatDer}
\bysame , {\og Cat\'egories d\'erivables\fg}, \emph{Bull. Soc. Math. France}
  \textbf{138} (2010), p.~317--393.

\bibitem{CiGM}
{\scshape D.-C. Cisinski {\normalfont \smfandname} G.~Maltsiniotis} -- {\og La
  cat\'egorie {$\mathbf\Theta$} de {J}oyal est une cat\'egorie test\fg},
  \emph{J. Pure Appl. Algebra} \textbf{215} (2011), p.~962--982.

\bibitem{DHKS}
{\scshape W.~G. Dwyer, P.~S. Hirschhorn, D.~M. Kan {\normalfont \smfandname}
  J.~H. Smith} -- \emph{Homotopy limit functors on model categories and
  homotopical categories}, Mathematical Surveys and Monographs, Vol. 113,
  American Mathematical Society, 2004.

\bibitem{Franke}
{\scshape J.~Franke} -- {\og Uniqueness theorems for certain triangulated
  categories possessing an {A}dams spectral sequence\fg}, K-theory Preprint
  Archives, 139, 1996.

\bibitem{SGA1}
{\scshape A.~Grothendieck} -- \emph{Rev\^etements \'etales et groupe
  fondamental {{\rm (SGA1)}}}, Lecture Notes in Mathematics, Vol. 224,
  Springer-Verlag, 1971.

\bibitem{PS}
\bysame , {\og \emph{Pursuing stacks}\fg}, Manuscrit, 1983, \`a para\^\i tre
  dans \emph{Documents Ma\-th\'e\-ma\-tiques}.

\bibitem{Der}
\bysame , {\og \emph{Les d\'erivateurs}\fg}, Manuscrit,
  1990,~www.math.jussieu.fr/\raise -3.3pt\vbox{\hbox{$\widetilde{ \
  }\,$}}maltsin/groth \hbox{/Derivateurs.html}.

\bibitem{Guit1}
{\scshape R.~Guitart} -- {\og Relations et carr\'es exacts\fg}, \emph{Ann. sc.
  math. Qu\'ebec} \textbf{IV, 2} (1980), p.~103--125.

\bibitem{Guit2}
\bysame , {\og Carrés exacts et carrés déductifs\fg}, \emph{Diagrammes}
  \textbf{6} (1981), p.~G1--G17.

\bibitem{GuitSplit}
\bysame , {\og \emph{Split exact squares and torsion}\fg}, Preprint, 1988.

\bibitem{Guit3}
{\scshape R.~Guitart {\normalfont \smfandname} L.~Van~den Bril} -- {\og Note
  sur la détermination des homologies par les carrés exacts\fg},
  \emph{Diagrammes} \textbf{6} (1981), p.~GV1--GV7.

\bibitem{Guit4}
\bysame , {\og Calcul des satellites et présentations des bimodules à l'aide
  des carrés exacts\fg}, \emph{Cahiers Topologie Géom. Différentielle}
  \textbf{XXIV, 3} (1983), p.~299--330.

\bibitem{Guit5}
\bysame , {\og Calcul des satellites et présentations des bimodules à l'aide
  des carrés exacts,~{II}\fg}, \emph{Cahiers Topologie Géom. Différentielle}
  \textbf{XXIV, 4} (1983), p.~333--369.

\bibitem{Hel}
{\scshape A.~Heller} -- {\og Homotopy theories\fg}, \emph{Mem. Amer. Math.
  Soc.} \textbf{71} (1988), no.~383.

\bibitem{Hel2}
\bysame , {\og Stable homotopy theories and stabilization\fg}, \emph{J. Pure
  Appl. Algebra} \textbf{115} (\gmremarque{b}1997), p.~113--130.

\bibitem{Hel3}
\bysame , {\og Homological algebra and (semi)stable homotopy\fg}, \emph{J. Pure
  Appl. Algebra} \textbf{115} (\gmremarque{c}1997), p.~131--139.

\bibitem{Hel4}
\bysame , {\og Semistability and infinite loop spaces\fg}, \emph{J. Pure Appl.
  Algebra} \textbf{154} (\gmremarque{d}2000), p.~213--220.

\bibitem{BKGM}
{\scshape B.~Kahn {\normalfont \smfandname} G.~Maltsiniotis} -- {\og Structures
  de d\'erivabilit\'e\fg}, \emph{Adv. in Math.} \textbf{218} (2008),
  p.~1286--1318.

\bibitem{Bernhard}
{\scshape B.~Keller} -- {\og Derived categories and universal problems\fg},
  \emph{Comm. Algebra} \textbf{19} (1991), p.~699--747.

\bibitem{Mal1}
{\scshape G.~Maltsiniotis} -- {\og Introduction à la th\'eorie des
  d\'erivateurs\fg}, Pr\'epublication, 2001, www.math.jussieu.fr/\raise
  -3.3pt\vbox{\hbox{$\widetilde{ \ }\,$}}maltsin/.

\bibitem{Ast}
\bysame , \emph{La th\'eorie de l'homotopie de {G}rothendieck}, Ast\'erisque,
  Vol.~301, Soc. Math. France, 2005.

\bibitem{Mal2}
\bysame , {\og Structures d'asph\'ericit\'e, foncteurs lisses, et
  fibrations\fg}, \emph{Annales Ma\-th\'e\-ma\-tiques Blaise Pascal}
  \textbf{12} (2005), p.~1--39.

\bibitem{Qu0}
{\scshape D.~Quillen} -- \emph{Homotopical algebra}, Lecture Notes in
  Mathematics, Vol. 43, Springer-Verlag, 1967.

\bibitem{Qu}
\bysame , {\og Higher algebraic {K}-theory: I\fg}, \emph{Algebraic K-theory I},
  Lecture Notes in Mathematics, Vol. 341, p.~85-147, Springer-Verlag, 1973.

\end{thebibliography}

\providecommand{\bysame}{\leavevmode ---\ }
\providecommand{\og}{``}
\providecommand{\fg}{''}
\providecommand{\smfandname}{et}
\providecommand{\smfedsname}{\'eds.}
\providecommand{\smfedname}{\'ed.}
\providecommand{\smfmastersthesisname}{M\'emoire}
\providecommand{\smfphdthesisname}{Th\`ese}

\end{document}